\documentclass{amsart}
\usepackage{graphicx} 

\input{header}

\title{Abstract Independence Relations in Neostability Theory}
\author{Alberto Miguel-Gómez}
\date{\today}
\address{Department of Pure Mathematics and Mathematical Statistics, Centre for Mathematical Sciences, Wilberforce Road, Cambridge CB3 0WB, UK}
\email{am2962@cam.ac.uk}

\begin{document}

\maketitle
\begin{abstract}
    We develop a framework, in the style of Adler, for interpreting the notion of ``witnessing'' that has appeared (usually as a variant of Kim's Lemma) in different areas of neostability theory as a binary relation between abstract independence relations. This involves extending the relativisations of Kim-independence and Conant-independence due to Mutchnik to arbitrary independence relations. 
    
    After developing this framework, we show that several results from simplicity, $\NTP_2$, $\NSOP_1$, and beyond follow as instances of general theorems for abstract independence relations. In particular, we prove the equivalence between witnessing and symmetry and the implications from this notion to chain local character and the weak independence theorem, and recover some partial converses. Finally, we use this framework to prove a dichotomy between $\NSOP_1$ and Kruckman and Ramsey's $\BTP$ that applies to most known $\NSOP_4$ examples in the literature.
\end{abstract}
\tableofcontents
\section{Introduction}
The goal of this paper is twofold. First, we develop a framework that formalises the different notions of ``witnessing'' which have appeared across several papers in the neostability literature during the last few decades. These have traditionally received the name ``Kim's Lemma'', after the original version of this result in the context of simple theories (see \cite{kim1998forking}). In this paper, we treat witnessing, or Kim's Lemma, as a relation between two abstract independence relations. Equipped with this, we then show that many results from the theories of simplicity, $\NTP_2$, $\NSOP_1$ and beyond can be obtained as particular instances of general theorems for abstract independence relations. 

To explain this further, we give some general remarks about the previous development of abstract independence relations in model theory. The first explicit appearance of this notion in the model-theoretic literature is due to Makkai \cite{makkai1984survey}, and had the aim of reelaborating the results of Shelah on forking in stable theories from \cite[Chapter III]{shelah1978classification}. There are several results in the area that, viewed from our contemporary standpoint, can be restated as results about independence relations; an important example is Harnik and Harrington's characterisation of stable theories in terms of properties of certain extensions of types \cite{harnik1984fundamentals}. Treating non-forking as an independence relation also contributed to the development of geometric stability theory. 

The importance of independence relations came to the fore with Kim and Pillay's work on simple theories (see \cite{kim1998forking,kim1997simple,kim2001simplicity}). The key observation that properties of non-forking independence extend to and characterise the wider class of simple theories unified the work of several previous authors and brought powerful new tools for the development of other notions within simplicity. This also motivated the development of other classes of first-order theories characterised by the good behaviour of an independence relation, such as the class of rosy theories (see \cite{onshuus2006properties}).

As a generalisation of this work, Adler \cite{adler2009geometric} introduced the notion of an \textit{abstract independence relation} as a ternary relation on small subsets of a monster model of a fixed theory satisfying a number of properties. This treated independence relations as a ``semantic'' notion that abstracts from the more ``syntactic'' properties that characterise the traditional dividing lines, such as the order property in stable theories. It was also the first systematic attempt to ground many results from stable and simple theories on properties of these abstract independence relations. Moreover, \cite{adler2009geometric} shows that central results about simple theories (for instance, the fact that non-forking independence is symmetric) can be derived purely from properties of abstract independence relations. This also provided the framework in which the study of $\NTP_2$ theories took place (see \cite{chernikov2012forking,chernikov2014theories}). 

Recent developments in neostability have shown the limitations of Adler's original definition. The ternary relation characterising $\NSOP_1$ theories, known as \textit{Kim-independence} and developed by Kaplan, Shelah and Ramsey in \cite{kaplan2020kim,kaplan2019local,kaplan2021transitivity}, or the more recent notion of \textit{Conant-independence} from Mutchnik's work in \cite{mutchnik2024conant}, are generally far from satisfying all the properties that Adler required of an ``independence relation''. Moreover, there are several results across the different dividing lines (such as the equality of the relevant notions of forking and dividing) that point to an underlying unity not captured by Adler's framework. We show in this paper that many of these results can be fit into an abstract framework, similar to Adler's, which captures the notion of ``witnessing'' as a relation between independence relations.

This basic idea for the formalisation of the concept of witnessing, although original in its explicit formulation, is far from being so in its content. Many authors in recent years have encountered the issue of extending notions of witnessing beyond simple theories, and in doing so, they have provided the building blocks for the theory that is presented here. Thus, this work also aims to be a compilation of many results that neostability theory has established in recent years and a general reference to different areas of interest ($\NTP_2$, $\NSOP_1$, etc.).

A limitation of the present study is that we have restricted ourselves to working over models. It is an ongoing area of research how to extend the known results from this case in particular contexts (most notably $\NSOP_1$) to arbitrary bases (see, e.g., \cite{dobrowolski2022independence,mutchnik2024mathrm}). The project of extending the results contained in this paper to arbitrary bases over which $\ind$ satisfies full existence is left for future work. Let us note on this point that some results from \S\S\ref{sec:symmetry}-\ref{sec:independence-thm} have been independently obtained by Itay Kaplan and Nicholas Ramsey in unpublished work on defining Kim-independence over arbitrary bases in $\NSOP_1$ theories. The setting, purpose and extent of the two projects, however, are very different.

The structure of the paper is as follows. After a recap of traditional properties and constructions of independence relations in \S\ref{sec:independence}, we extend the notion of \textit{relative Kim-independence} introduced by Mutchnik in \cite{mutchnik2024conant} to a more general framework. This is done in two parts. In \S\ref{sec:strong-finite-character}, we introduce a new operation on abstract independence relations, in the style of Adler, that corresponds to ``forcing strong finite character''. With this operation in the background, we construct in \S\ref{sec:rel-kim-and-conant} the relativisation of Kim-independence to an independence relation $\ind$ from the more primitive relation of lifting the indiscernibility of Morley sequences (based on \cite{shelah1980simple}). This section also introduces a weak notion of relative Kim-dividing, which becomes relevant for the characterisations of witnessing in later sections. In this section, we also adapt the universal notion of Conant-independence appearing, beyond Mutchnik's work, in \cite{kaplan2019local}, \cite{kruckman2024new} and \cite{kim2022some}. 

In \S\ref{sec:witnessing} we introduce three different formulations of witnessing stemming from the literature, which we prove to be equivalent in most cases. We also include some basic results about witnessing here. The notion of witnessing we use for the rest of the paper is called GUWP (for \textit{generalised universal witnessing property}; see \thref{guwp-def}). Specifically, $\ind^1$ has GUWP w.r.t. $\ind^2$ if any formula that $\ind^2$-Kim-divides over a model $M \models T$ also $\ind^1$-Conant-divides over it. We develop the formal notion in this section and its equivalence to other renditions of ``Kim's lemma'' in the literature.

In \S\ref{sec:chain-local-character}, we move towards adapting more general results about the equivalence of witnessing to other traditional properties of abstract independence relations. In preparation for this, we start with \textit{chain local character} and show the following:
\begin{restateprop}{local-character-for-universal-kim-ind}
    Let $\ind$ be an independence relation that satisfies full existence, monotonicity, strong finite character, and right extension. Then $\ind$-Conant-independence satisfies chain local character. 
\end{restateprop}
As a corollary, we deduce that Conant-independence (in the sense of \cite{mutchnik2024conant}) satisfies chain local character in any theory. 

In \S\ref{sec:symmetry}, we recover the equivalence with symmetry:
\begin{restate}{symmetry-characterisation}
    Let $\ind$ be an independence relation satisfying full existence and monotonicity. Suppose that $\ind$-Kim-independence is compatible with $\ind$ and satisfies left transitivity. The following are equivalent:
    \begin{enumerate}[(i)]
        \item $\ind$-Kim-independence satisfies GUWP w.r.t. $\ind$. 
        \item $\ind$-Kim-independence is symmetric.
    \end{enumerate}
\end{restate}
We also show in this section how this allows us to recover one direction of \cite[Theorem 3.12]{mutchnik2024conant} from purely semantic arguments, as well as use the above result to relate witnessing to Adler's notion of an independence relation. 

In \S\ref{sec:independence-thm}, we consider relative versions of the Independence Theorem. We show that a general version of the ``weak independence theorem'' appearing in \cite{kaplan2020kim} and \cite{mutchnik2025nsop} follows from the assumption of witnessing and another property which we call ``weak transitivity'' (\thref{wit}). We also show that a slight strengthening of these axioms provides a new sufficient condition for $\NSOP_1$ theories (\thref{subclass-of-nsop1-characterisation}). We then prove the following partial converse to the weak independence theorem:
\begin{restate}{wit-and-guwp}
    Let $\ind^1 \implies \ind^2$ be independence relations satisfying full existence and monotonicity such that $\ind^1$ also satisfies left and right extension. If $\ind^2$-Conant-independence satisfies the $\ind^2$-independence theorem over models, then $\ind^1$ satisfies GUWP w.r.t. $\ind^2$. 
\end{restate}
This result allows us to show, in a purely semantic fashion, that if Conant-independence satisfies the independence theorem over models, then $T$ is $\NSOP_1$ (\thref{conant-independence-satisfying-wit-implies-nsop1}). This proof thus avoids the tree construction from \cite[Proposition 5.2]{chernikov2016model}.

Finally, in \S\ref{sec:dichotomies}, we prove a technical result that allows us to prove a dichotomy between $\NSOP_1$ and $\BTP$ for the class of theories with an independence relation $\ind$ satisfying full existence, monotonicity, and quasi-strong finite character, and such that $\iind$ satisfies GUWP w.r.t. $\ind$ (\thref{nsop1-or-btp-dichotomy}). In particular, this shows that most known strictly $\NSOP_4$ examples have $\BTP$. 
\subsection*{Acknowledgments}
The present work was completed during my PhD at Imperial College London and funded by the EPSRC (EP/W524323/1). The final revision was undertaken at the University of Cambridge under an LMS Early Career Fellowship. I am deeply grateful to my supervisors, David Evans and Charlotte Kestner, for their guidance, suggestions, and constant support. I would also like to thank Christian d'Elbée and Nicholas Ramsey for many useful comments and suggestions.
\subsection*{Keywords}
Model theory; neostability; independence relations; NSOP1; NSOP4
\section{Abstract independence relations}\label{sec:independence}
Let $T$ be a fixed first-order theory with a monster model $\M \models T$. 
\begin{definition}
    An \textbf{independence relation} $\ind$ is an $\Aut(\M)$-invariant ternary relation on subsets of $\M$. We write $A \ind_C B$ if $(A, B, C)$ lies in this relation, and say that $A$ is independent from $B$ over $C$. We say $\ind$ is an independence relation \textbf{over models} if $C$ is only allowed to be models of $T$.
\end{definition}
\begin{remark}
    Our definition of the informal notion of an independence relation is extremely general, and does not correspond to most formal uses of the notion previously appearing in the literature (e.g., \cite{adler2009geometric}). Its immediate precedent is found in what Chernikov and Kaplan call a ``preindependence relation'' in \cite{chernikov2012forking}. We favour this lax use of the term in this paper, since our purpose is to be explicit about the properties used in each result. 
\end{remark}  
\begin{convention}
    Let $a, b$ be tuples enumerating the sets $A, B$, respectively. We oftentimes write $a \ind_C b$ to mean $A \ind_C B$. 
\end{convention}
\begin{definition} \thlabel{properties-of-independence}
    The following is a list of common properties that independence relations may exhibit (where all sets and models are small):
    \begin{enumerate}[(i)]
        \item \textit{Left monotonicity}: if $A \ind_C B$ and $A' \subseteq A$, then $A' \ind_C B$.
        \item \textit{Right monotonicity}: if $A \ind_C B$ and $B' \subseteq B$, then $A \ind_C B'$. 
        \item \textit{Existence}: for any $A$ and $C$, $A \ind_C C$.
        \item \textit{Left extension}: if $A \ind_C B$ and $A \subseteq A'$, then there is some $B' \equiv_{CA} B$ such that $A' \ind_C B'$.
        \item \textit{Right extension}: if $A \ind_C B$ and $B \subseteq B'$, then there is some $A' \equiv_{CB} A$ such that $A' \ind_C B'$. 
        \item \textit{Left normality}: if $A \ind_C B$, then $CA \ind_C B$.
        \item \textit{Right normality}: if $A \ind_C B$, then $A \ind_C BC$.
        \item \textit{Left base monotonicity}: if $C \subseteq D \subseteq A$ and $A \ind_C B$, then $A \ind_D B$.
        \item \textit{Right base monotonicity}: if $C \subseteq D \subseteq B$ and $A \ind_C B$, then $A \ind_D B$.
        \item \textit{Left transitivity}: Given $D \subseteq C \subseteq B$, if $C \ind_D A$ and $B \ind_{C} A$, then $B \ind_D A$.
        \item \textit{Right transitivity}: Given $D \subseteq C \subseteq B$ if $A \ind_D C$ and $A \ind_C B$, then $A \ind_D B$.
        \item \textit{Strong finite character}: if $A \nind_C B$, then there are finite tuples $a \in A, b \in B$ and some $\phi(x,b) \in \tp(a/BC)$ such that, for all $a' \models \phi(x,b)$, $a' \nind_C b$.  
        \item \textit{Quasi-strong finite character}: for all types $p(x)$ and $q(y)$ over $C$, there is a set of $\mathcal{L}(C)$-formulas $\Sigma_{p,q}(x,y)$ such that 
        \begin{equation*}
            (a,b) \models \Sigma_{p,q}(x,y) \iff a \ind_C b \quad \& \quad \models p(a) \wedge q(b).
        \end{equation*}
        \item \textit{Local character}: For all $A$, there is a cardinal $\kappa = \kappa(A)$ such that, for all $B$, there is a set $C \subseteq B$ with $\size{C} < \kappa$ such that $A \ind_C B$.
        \item \textit{Symmetry}: if $A \ind_C B$, then $B \ind_C A$.
        \item \textit{Stationarity over models}: if $M \models T$, $A \ind_M B$, $A' \ind_M B$, and $A \equiv_M A'$, then $A \equiv_{MB} A'$.
        \item \textit{Independence Theorem over Models}: if $M \models T$, $A \equiv_M A'$, $A \ind_M B$, $A' \ind_M C$, and $B \ind_M C$, then there exists some $A''$ such that $A'' \equiv_{MB} A$, $A'' \equiv_{MC} A'$, and $A'' \ind_M BC$.  
    \end{enumerate}
\end{definition}
\begin{remark}
    \begin{enumerate}[(i)]
        \item We say $\ind$ has \textbf{monotonicity} if it has both left and right monotonicity. Similarly with all other ``left'' and ``right'' properties. 
        \item We say $\ind$ satisfies existence \textbf{over models} if, for any $A$ and $M \models T$, we have $A \ind_M M$. 
        \item Note that existence, right extension, and right monotonicity jointly imply the following property:
        \begin{enumerate}
            \item[$(*)$] \textit{Full existence}: For all $A, B, C$, there is $A' \equiv_C A$ such that $A' \ind_C B$. 
        \end{enumerate}
        Conversely, full existence implies existence.
    \end{enumerate}
\end{remark}
\begin{remark}
    Several of the above properties already appear, in some form, in \cite{shelah1978classification}. Their explicit versions in this abstract setting can be traced back to \cite{baldwin1988fundamentals}. The modern notation is due to \cite{adler2009geometric}. Some more recent properties appear in \cite{tent2013isometry,mutchnik2025nsop}. 
\end{remark}
\begin{example}[\protect{cf. \cite{casanovas2011simple,kaplan2020kim}}] \thlabel{ind-relations}
    Some common independence relations in the literature are the following:
    \begin{itemize}
        \item $A \aind_C B$ iff $\acl(AC) \cap \acl(BC) \subseteq \acl(C)$. In any theory, $\aind$ satisfies monotonicity, existence, left and right extension, normality, transitivity, strong finite character, local character, and symmetry.
        \item $A \iind_C B$ iff, for some (equiv., any) enumeration $a$ of $A$, $\tp(a/BC)$ extends to a global type which is Lascar-invariant over $C$. In any theory, $\iind$ satisfies monotonicity, existence over models, right extension, normality, right base monotonicity, left transitivity, and strong finite character. 
        \item $A \uind_C B$ iff, for some (equiv., any) enumeration $a$ of $A$, $\tp(a/BC)$ extends to a global type finitely satisfiable over $C$ (also known as a \textit{coheir extension}). In any theory, $\uind$ satisfies monotonicity, existence over models, left and right extension, normality, right base monotonicity, and strong finite character. 
        \item $A \find_C B$ iff, for some (equiv., any) enumeration $a$ of $A$, $\tp(a/BC)$ does not contain any formula that forks over $C$. In any theory, $\find$ satisfies monotonicity, existence over models, right extension, normality, right base monotonicity, left transitivity, and strong finite character. 
        \item For $M \models T$, $A \kind_M B$ iff, for some (equiv., any) enumeration $a$ of $A$, $\tp(a/MB)$ does not contain any formula that Kim-forks over $M$. Its properties will be given in a more general setting in \S\ref{sec:rel-kim-and-conant}.
    \end{itemize}
\end{example}
\begin{definition}
    Given two independence relations $\ind^1$ and $\ind^2$ on small subsets of $\M \models T$, we write $\ind^1 \implies \ind^2$ if, whenever $A \ind_C^1 B$, we have $A \ind_C^2 B$. 
\end{definition}
\begin{remark}
    The following are all the implications between the independence relations from \thref{ind-relations} in a general theory:
    \begin{equation*}
        {\ind}^\text{u} \implies {\ind}^{\text{i}} \implies {\ind}^{\text{f}} \implies {\ind}^\text{a}.
    \end{equation*}
    If we work over models, we also have $\find \implies \kind \implies \aind$.
\end{remark}
Adler observed in \cite{adler2009geometric} that several constructions in model theory can be obtained as the result of operations on abstract independence relations. We can find in the literature some common examples of unary and binary operations that allow us to build new independence relations out of old ones. 
\begin{definition}\thlabel{ind-star}
    Given an independence relation $\ind$, we define $\ind^*$ by
    \begin{equation*}
        A \: {\ind_C}^*\: B \iff \text{ for all } B' \supseteq B, \text{ there exists } A' \equiv_{BC} A \text{ such that } A' \ind_C B'.
    \end{equation*}
\end{definition}
\begin{fact}[\protect{\cite[Lemma 3.1]{adler2009geometric}}] \thlabel{ind-star-props} 
Let $\ind$ be an independence relation.
    \begin{enumerate}[(i)]
        \item $\ind^*$ is an independence relation and $\ind^* \implies \ind$.
        \item If $\ind^1 \implies \ind^2$ and $\ind^1$ satisfies right extension, then $\ind^1 \implies (\ind^2)^*$. In particular, if $\ind^1 \implies \ind^2$, then $(\ind^1)^* \implies (\ind^2)^*$.
        \item If $\ind$ satisfies monotonicity/right base monotonicity/left transitivity/left normality, then so does $\ind^*$.
        \item If $\ind$ satisfies monotonicity, then $\ind = \ind^*$ iff $\ind$ satisfies right extension. 
        \item If $\ind$ satisfies monotonicity and strong finite character, then $\ind^*$ satisfies strong finite character.
    \end{enumerate}
\end{fact}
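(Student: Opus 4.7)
The plan is to verify each clause by unpacking the defining condition of $\ind^*$ and transporting the corresponding hypothesis on $\ind$ through it; the main obstacles will be left transitivity in (iii) and a compactness argument for (v). Part (i) should be immediate: $\Aut(\M)$-invariance of $\ind^*$ is inherited from that of $\ind$, and $\ind^* \implies \ind$ follows by instantiating the definition with $B' = B$ and pulling the resulting $A' \equiv_{BC} A$ back via an automorphism fixing $BC$. Part (ii) is also a direct unfolding: for the first claim, right extension of $\ind^1$ produces, for each $B' \supseteq B$, a conjugate $A' \equiv_{BC} A$ with $A' \ind^1_C B'$ and hence $A' \ind^2_C B'$, giving $A (\ind^2)^*_C B$; the ``in particular'' is the same unfolding applied to $A (\ind^1)^*_C B$, using only $\ind^1 \implies \ind^2$.

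For (iii), left and right monotonicity, right base monotonicity, and left normality all reduce uniformly to applying the corresponding property of $\ind$ to each witness furnished by the definition of $\ind^*$, with minor bookkeeping of the equivalence relation. The hard case is left transitivity. Given $C \prec D$, $D \ind^*_C A$, $B \ind^*_D A$ and an extension $A' \supseteq A$, the plan is: (a) extract $D^* \equiv_{AC} D$ with $D^* \ind_C A'$ from $D \ind^*_C A$; (b) choose $\sigma \in \Aut(\M/AC)$ with $\sigma(D) = D^*$ and set $B_1 := \sigma(B)$, so that $B_1 \ind^*_{D^*} A$ by $\Aut(\M)$-invariance; (c) apply this relation to the extension $A' \supseteq A$ to obtain $B^* \equiv_{AD^*} B_1$ with $B^* \ind_{D^*} A'$; (d) invoke left transitivity of $\ind$ to conclude $D^* B^* \ind_C A'$, and verify $D^* B^* \equiv_{AC} DB$ by composing the two conjugacies. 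Part (iv) then follows by direct inspection: the definition of $\ind^*$ has right extension built in, so $\ind = \ind^*$ forces $\ind$ to have right extension; conversely, right extension of $\ind$ directly produces the witness needed in the definition, giving $\ind \implies \ind^*$, which combined with $\ind^* \implies \ind$ from (i) yields equality.

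Part (v) is the main compactness step. Unpacking $a \nind^*_M b$ gives some $b' \supseteq b$ such that no $Mb$-conjugate of $a$ is $\ind_M$-independent from $b'$. Strong finite character of $\ind$ lets one describe $\{a'' : a'' \nind_M b'\}$ as a union $\bigcup_{\chi} [\chi(x, b')]$ over $Mb'$-formulas $\chi$ all of whose realizations are $\ind_M$-dependent with $b'$, so that the partial type
\[
\tp(a/Mb) \cup \{\neg \chi(x, b') : \chi \text{ is an SFC-witness for } \nind_M b'\}
\]
is inconsistent. Compactness will then extract $\phi(x, b) \in \tp(a/Mb)$ forcing every realization to satisfy some $\chi_i(x, b')$. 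The punchline is that if $a' \models \phi$ and $a' \ind^*_M b$ both held, the definition of $\ind^*$ applied to $b' \supseteq b$ would produce $a'' \equiv_{Mb} a'$ with $a'' \ind_M b'$; since $\phi \in \l_{Mb}$, $a''$ still realizes $\phi$ and hence some $\chi_i$, contradicting $a'' \ind_M b'$. Thus $\phi$ will be the required strong finite character witness for $\ind^*$.
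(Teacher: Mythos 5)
The paper states this result as a Fact, citing Adler's Lemma 3.1, and gives no proof of its own, so there is nothing to compare against; judged on its own terms, your argument is correct and is essentially the standard verification. All five clauses check out, including the delicate points: in (iii) your conjugation trick for left transitivity works because $C \subseteq D$ forces $C \subseteq D^*$, so the witness $D^*B^*$ really is $\equiv_{AC} DB$; and in (iv) the forward direction is just the observation that $\ind \implies \ind^*$ is literally the statement of right extension. The only step you gloss over is where monotonicity actually enters in (v): the strong-finite-character witnesses $\chi_i$ have parameters in finite subtuples $d_i \subseteq Mb'$ and conclude only that the relevant subtuple of $a''$ is $\nind_M d_i$, so passing from this to $a'' \nind_M b'$ (equivalently, the contrapositive of left and right monotonicity of $\ind$) is where the hypothesis is used; similarly, right monotonicity of $\ind^*$ in (iii) requires replacing a given $B' \supseteq B_0$ by $B' \cup B$ before invoking the definition. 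These are routine and do not affect correctness.
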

\begin{example}
    As it is standard in the literature, let us write $A \dind_C B$ if, for some (equiv., any) enumeration $a$ of $A$, $\tp(a/BC)$ does not contain any formula that divides over $C$. Then $\ind^{\textnormal{f}} = (\ind^{\textnormal{d}})^*$ (see \cite[\S5]{adler2009geometric}).
\end{example}
The next operation has been explicitly defined by d'Elbée (\cite[Definition 3.2.9]{delbee2023axiomatic}):
\begin{definition}
    Given an independence relation $\ind$, we define $\ind^{\textnormal{opp}}$ by
    \begin{equation*}
        A \:{\ind_C}^{\text{opp}}\: B \iff B \ind_C A.
    \end{equation*}
\end{definition}
\begin{fact} \thlabel{properties-of-opp}
Let $\ind$ be an independence relation.
    \begin{enumerate}[(i)]
        \item $\ind^{\textnormal{opp}}$ is an independence relation.
        \item If $\ind^1 \implies \ind^2$ and $\ind^1$ satisfies symmetry, then $\ind^1 \implies (\ind^2)^{\textnormal{opp}}$. In particular, if $\ind^1 \implies \ind^2$, then $(\ind^1)^{\textnormal{opp}} \implies (\ind^2)^{\textnormal{opp}}$.
        \item If $\ind$ satisfies the left version of any axiom, then $\ind^{\textnormal{opp}}$ satisfies the right version. 
        \item $\ind = \ind^{\textnormal{opp}}$ iff $\ind$ satisfies symmetry. 
        \item If $\ind$ satisfies full existence, then so does $\ind^{\textnormal{opp}}$.
    \end{enumerate}
\end{fact}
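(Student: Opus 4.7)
The plan is to verify each of the five clauses directly from the definitions, mostly by symmetric rewriting and elementary automorphism arguments. Throughout, given a condition on $\ind^{\textnormal{opp}}$, I unfold the definition to translate it into a condition on $\ind$ with the roles of the ``left'' and ``right'' arguments swapped, and then apply the corresponding hypothesis on $\ind$.

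For (i), I observe that if $\sigma \in \Aut(\M)$, then $A \ind^{\textnormal{opp}}_C B$ iff $B \ind_C A$ iff $\sigma(B) \ind_{\sigma(C)} \sigma(A)$ (by invariance of $\ind$) iff $\sigma(A) \ind^{\textnormal{opp}}_{\sigma(C)} \sigma(B)$. For (iii) and (iv), everything is essentially immediate: the left versions of the standard axioms become the right versions when one swaps arguments, and $\ind = \ind^{\textnormal{opp}}$ unpacks exactly to symmetry.

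For (ii), suppose $A \ind^1_M B$. By symmetry of $\ind^1$, $B \ind^1_M A$, and by $\ind^1 \implies \ind^2$, $B \ind^2_M A$, i.e.\ $A (\ind^2)^{\textnormal{opp}}_M B$. For the ``in particular'' clause, I note that $(\ind^1)^{\textnormal{opp}}$ automatically satisfies symmetry is \emph{not} needed; rather, if $A (\ind^1)^{\textnormal{opp}}_M B$ then $B \ind^1_M A$, so $B \ind^2_M A$ by hypothesis, so $A (\ind^2)^{\textnormal{opp}}_M B$.

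For (v), I need to show: for all $A, B, M \models T$, there is $A' \equiv_M A$ with $A' \ind^{\textnormal{opp}}_M B$, i.e.\ $B \ind_M A'$. Applying full existence of $\ind$ to the pair $(B, A)$ over $M$, I get $B' \equiv_M B$ with $B' \ind_M A$. Choosing $\sigma \in \Aut(\M/M)$ with $\sigma(B') = B$ and setting $A' := \sigma(A)$, the $\Aut(\M)$-invariance of $\ind$ gives $B \ind_M A'$, i.e.\ $A' \ind^{\textnormal{opp}}_M B$, while $A' \equiv_M A$ since $\sigma$ fixes $M$ pointwise. No step here should be an obstacle; the only subtle point is remembering to produce the witness on the correct side via the automorphism trick in (v).
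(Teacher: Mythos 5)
Your verification is correct: the paper states this as a Fact without proof (it is routine folklore), and your clause-by-clause unfolding of the definition of $\ind^{\textnormal{opp}}$ — including the direct argument for the ``in particular'' part of (ii) that bypasses symmetry, and the automorphism trick in (v) to move the full-existence witness from the right side to the left — is exactly the intended argument. No gaps.
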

\begin{example}\thlabel{hind}
    Again, using the standard notation and terminology from the literature, we write $A \hind_C B$ if, for some (equiv., any) enumeration $a$ of $A$, whenever $\phi(x,b) \in \tp(a/BC)$, there is some $c$ in $C$ such that $\phi(x,c) \in \tp(a/BC)$. It is folklore that $\ind^{\text{u}} = (\ind^{\text{h}})^{\text{opp}}$.
\end{example}
The third operation was introduced by the author in \cite[\S 5.2]{miguel2024classification}:
\begin{definition}
    Given an independence relation $\ind$, we define $\ind^{\text{le}}$ by 
    \begin{equation*}
        A \: {\ind_C}^{\textnormal{le}}\: B \iff \text{for all } D \supseteq A, \text{ there exists } B' \equiv_{AC} B \text{ with } D \ind_C B'.
    \end{equation*}
\end{definition}
\begin{fact}[\protect{\cite{miguel2024classification}}]
    $\ind^{\textnormal{le}} = \Big(\Big(\ind^{\textnormal{opp}}\Big)^*\Big)^{\textnormal{opp}}$.
\end{fact}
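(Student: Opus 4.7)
The plan is to prove this purely by unwinding the three definitions and matching the resulting quantifier patterns. Since $\ind^{\textnormal{opp}}$ only swaps the left and right sides and $(-)^*$ adds a "for all extensions on the right, there exists a conjugate on the left" prefix, the composition $((\ind^{\textnormal{opp}})^*)^{\textnormal{opp}}$ should produce precisely a "for all extensions on the left, there exists a conjugate on the right" prefix, which is the shape of $\ind^{\textnormal{le}}$.

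Concretely, I would proceed in three short steps. First, unfold the innermost opposite: by definition $A \ind_C^{\textnormal{opp}} B$ iff $B \ind_C A$. Second, apply the star construction to $\ind^{\textnormal{opp}}$: using the definition of $(-)^*$, we get that $A (\ind^{\textnormal{opp}})^*_C B$ holds iff for every $B' \supseteq B$ there exists $A' \equiv_{BC} A$ with $A' \ind^{\textnormal{opp}}_C B'$, which by Step 1 is the same as $B' \ind_C A'$. Third, take the opposite one more time: $A ((\ind^{\textnormal{opp}})^*)^{\textnormal{opp}}_C B$ holds iff $B (\ind^{\textnormal{opp}})^*_C A$, which by Step 2 unfolds to the statement that for every $A' \supseteq A$ there exists $B' \equiv_{AC} B$ with $A' \ind_C B'$. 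This is, up to a harmless renaming of the dummy variable $A'$ as $D$, exactly the definition of $A \ind_C^{\textnormal{le}} B$.

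I do not anticipate a genuine obstacle here; the argument is a definitional chase. The only point where one should take a little care is the direction of the quantifier swap induced by the outer $\textnormal{opp}$: one must verify that conjugating over $BC$ (which appears in the definition of $(-)^*$ applied to $A$) becomes conjugating over $AC$ after the sides are swapped, so that the parameter set in the "there exists" clause matches that of $\ind^{\textnormal{le}}$. Beyond this bookkeeping, the equality falls out immediately, and no use of any property of $\ind$ (such as full existence or symmetry) is required.
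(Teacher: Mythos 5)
Your proof is correct: the equality is indeed just a definitional unwinding, and your bookkeeping of the quantifier swap and of the conjugation base (over $BC$ becoming over $AC$ after the outer opposite) is exactly right. The paper cites this fact from the author's thesis without reproducing the proof, but the argument there can only be this same definitional chase, so your approach matches.
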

\begin{corollary}
Let $\ind$ be an independence relation.
    \begin{enumerate}[(i)]
        \item $\ind^\textnormal{le}$ is an independence relation and $\ind^{\textnormal{le}} \implies \ind$.
        \item If $\ind^1 \implies \ind^2$ and $\ind^1$ satisfies left extension, then $\ind^1 \implies (\ind^2)^{\textnormal{le}}$. In particular, if $\ind^1 \implies \ind^2$, then $(\ind^1)^{\textnormal{le}} \implies (\ind^2)^{\textnormal{le}}$.
        \item If $\ind$ satisfies monotonicity/left base monotonicity/right transitivity/right normality, then so does $\ind^{\textnormal{le}}$.
        \item If $\ind$ satisfies monotonicity, then $\ind = \ind^{\textnormal{le}}$ iff $\ind$ satisfies left extension.
    \end{enumerate}
\end{corollary}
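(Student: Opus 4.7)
The plan is to exploit the identity $\ind^{\text{le}} = ((\ind^{\text{opp}})^*)^{\text{opp}}$ from the preceding Fact, reducing the entire corollary to a systematic translation of the facts about $\ind^*$ and $\ind^{\text{opp}}$ recorded in \thref{ind-star-props} and \thref{properties-of-opp}. A preliminary step, immediate from the definition of ${}^{\text{opp}}$, is that the opp-operation preserves implications: $\ind^1 \implies \ind^2$ gives $(\ind^1)^{\text{opp}} \implies (\ind^2)^{\text{opp}}$. Combined with \thref{properties-of-opp}(iii), which swaps ``left'' and ``right'' versions of axioms under opp, this lets me freely pass back and forth between $\ind$ and $\ind^{\text{opp}}$.

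For (i), applying \thref{properties-of-opp}(i), then \thref{ind-star-props}(i), then \thref{properties-of-opp}(i) again (together with opp-preservation of implications and the involutivity of opp) gives that $\ind^{\text{le}}$ is an independence relation with $\ind^{\text{le}} \implies \ind$. For (ii), I would take opposites of the hypothesis to turn left extension of $\ind^1$ into right extension of $(\ind^1)^{\text{opp}}$, apply \thref{ind-star-props}(ii) to deduce $(\ind^1)^{\text{opp}} \implies ((\ind^2)^{\text{opp}})^*$, and then take opposites back to conclude $\ind^1 \implies (\ind^2)^{\text{le}}$. The ``in particular'' clause follows by the same chain applied to the second half of \thref{ind-star-props}(ii).

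For (iii), the four properties listed for $\ind$ translate under opp to monotonicity, right base monotonicity, left transitivity, and left normality of $\ind^{\text{opp}}$; by \thref{ind-star-props}(iii) these pass to $(\ind^{\text{opp}})^*$, and a final translation under opp returns the (correspondingly dualised) properties for $\ind^{\text{le}}$. For (iv), noting that monotonicity transfers through opp, I invoke \thref{ind-star-props}(iv) to get the chain of equivalences $\ind^{\text{le}} = \ind$ iff $(\ind^{\text{opp}})^* = \ind^{\text{opp}}$ iff $\ind^{\text{opp}}$ has right extension iff $\ind$ has left extension.

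There is no serious obstacle beyond careful bookkeeping of the left/right duality under opp; once opp-preservation of implications is recorded and \thref{properties-of-opp}(iii) is applied cleanly, the entire corollary is an essentially mechanical dualisation of \thref{ind-star-props}.
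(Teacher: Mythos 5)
Your proposal is correct and is exactly the argument the paper intends: the corollary is stated immediately after the Fact $\ind^{\textnormal{le}} = ((\ind^{\textnormal{opp}})^*)^{\textnormal{opp}}$ precisely so that each item follows by dualising \thref{ind-star-props} through \thref{properties-of-opp}, which is what you do. The left/right bookkeeping in each of (i)--(iv) checks out, so there is nothing to add.
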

We can also define higher-arity operations on several independence relations, such as binary operations that, out of two given independence relations, define a new one. Here, we develop some basic operations that capture most of the kinds of relations which have been defined in the literature on neostability theory. 
\begin{definition}
    Let $\ind^1$ and $\ind^2$ be two independence relations. We define $\ind^{1 \wedge 2}$ by:
\begin{equation*}
    A \:{\ind_C}^{1 \wedge 2}\: B \iff A \:{\ind_C}^1\: B \text{ and } A\: {\ind_C}^2\: B.
\end{equation*}
For notational convenience, we may also write $\ind^1 \wedge \ind^2$ instead of $\ind^{1\wedge 2}$.
\end{definition}
\begin{lemma} \thlabel{conjunction-props}
Let $\ind^1$ and $\ind^2$ be independence relations.
    \begin{enumerate}[(i)]
        \item $\ind^{1 \wedge 2}$ is an independence relation and $\ind^{1 \wedge 2} \implies \ind^1$ (as well as $\ind^{1 \wedge 2} \implies \ind^2$).
        \item If both $\ind^1$ and $\ind^2$ satisfy (strong) finite character/symmetry/normality/ base monotonicity/monotonicity/transitivity, then so does $\ind^{1 \wedge 2}$.
        \item $\ind^{1 \wedge 2}$ satisfies existence iff both $\ind^1$ and $\ind^2$ do. 
        \item If at least one of $\ind^1$ and $\ind^2$ satisfies stationarity over $C$, then so does $\ind^{1 \wedge 2}$.
        \item If $\ind^{1 \wedge 2}$ satisfies local character, then so do $\ind^1$ and $\ind^2$. Conversely, if $\ind^1$ and $\ind^2$ satisfy local character and right base monotonicity, then $\ind^{1\wedge 2}$ satisfies local character.
    \end{enumerate}
\end{lemma}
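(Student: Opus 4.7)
The plan is to verify each of (i)--(iv) by unfolding the definition of $\ind^{1\wedge 2}$ and exploiting the fact that its truth is the conjunction of the truth of $\ind^1$ and $\ind^2$. Each of the listed properties has a ``hypothesis'' that refers only to the independence relation and a ``conclusion'' of the same shape, so they transfer through the conjunction almost mechanically.

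For (i), $\Aut(\M)$-invariance of $\ind^{1\wedge 2}$ follows from joint invariance of $\ind^1$ and $\ind^2$, and the implications $\ind^{1\wedge 2}\Rightarrow\ind^i$ are immediate from the definition. For (ii), in each case I would reason: assume the hypothesis of the property for $\ind^{1\wedge 2}$; by definition of $\ind^{1\wedge 2}$ the same hypothesis holds for both $\ind^1$ and $\ind^2$; apply the property to each to get the conclusion for both; reconjoin. This handles symmetry, (left/right) monotonicity, normality, base monotonicity, transitivity, and finite character. For \emph{strong} finite character, one needs a small observation: if $A\nind^{1\wedge 2}_M B$, then $A\nind^i_M B$ for at least one $i\in\{1,2\}$, so by strong finite character of $\ind^i$ there is $\phi(x,b)\in\tp(a/MB)$ such that every realisation $a'$ of $\phi$ satisfies $a'\nind^i_M b$, and hence $a'\nind^{1\wedge 2}_M b$ as required.

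For (iii), the existence clause is trivial in both directions because $A\ind^{1\wedge 2}_M M$ iff $A\ind^i_M M$ for both $i$. For local character, the forward direction ($\ind^{1\wedge 2}$ local character $\Rightarrow$ both $\ind^i$ local) is immediate from $\ind^{1\wedge 2}\Rightarrow\ind^i$. For the converse, given $A$ set $\kappa(A)=(\kappa_1(A)+\kappa_2(A))^+$; for any $B$, extract small $C_1,C_2\subseteq B$ witnessing local character of $\ind^1$ and $\ind^2$ respectively, and put $C=C_1\cup C_2$. The step where I expect the \emph{main obstacle} to arise is verifying $A\ind^i_C B$ from $A\ind^i_{C_i}B$: this is precisely base monotonicity, and the claim as written seems to rely on it being available for both relations (a natural hypothesis in the contexts where local character is invoked, but worth flagging).

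Finally, for (iv), suppose without loss of generality that $\ind^1$ satisfies stationarity over $C$, and take $A\ind^{1\wedge 2}_C B$, $A'\ind^{1\wedge 2}_C B$ with $A\equiv_C A'$. Since $\ind^{1\wedge 2}\Rightarrow\ind^1$ we obtain $A\ind^1_C B$ and $A'\ind^1_C B$, and stationarity of $\ind^1$ yields $A\equiv_{CB}A'$, as desired. Apart from the local character subtlety noted above, every step is a direct bookkeeping exercise and requires no clever construction.
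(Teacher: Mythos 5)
Your proof is correct and matches the paper's approach exactly: the paper's entire proof is ``It is immediate to check each of these separately,'' and your mechanical unfolding of the conjunction is precisely that check (including the correct handling of strong finite character via the contrapositive). Your flag on the local-character clause of (iii) is well taken --- passing from $A \ind^i_{C_i} B$ to $A \ind^i_{C_1 \cup C_2} B$ genuinely requires base monotonicity over arbitrary bases, which is not among the stated hypotheses and given the paper's very permissive definition of an independence relation the ``if'' direction can actually fail without it, so this is a gap in the lemma as stated rather than in your argument.
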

\begin{proof}
    It is immediate to check (i)-(iv) separately. We prove (v). The first part of the statement is clear. Suppose now that $\ind^1$ and $\ind^2$ both satisfy local character and right base monotonicity. Take $A$. By local character, there exist cardinals $\kappa_1,\kappa_2$ depending on $A$ such that, for any $B$, we can find $C_1,C_2 \subseteq B$ such that $\size{C_1} < \kappa_1$, $\size{C_2} < \kappa_2$, $A \ind_{C_1}^1 B$ and $A \ind_{C_2}^2 B$. Now, $\size{C_1 \cup C_2} < \kappa_1 + \kappa_2$ and $C_1\cup C_2 \subseteq B$. Thus, by right base monotonicity, we have $A \ind_{C_1\cup C_2}^1 B$ and $A \ind_{C_1 \cup C_2}^2 B$, and hence $A \ind_{C_1 \cup C_2}^{1\wedge 2} B$. This proves that $\kappa_1+\kappa_2$ witnesses local character for $\ind^{1\wedge 2}$.
\end{proof}
\begin{example}
    \begin{enumerate}[(i)]
        \item The independence relation define by the author in \cite{miguel2024classification} to prove $\TP_2$ is obtained via this operation: $\ind^{\text{hti}}$ is defined as $\ind^{\text{ht}} \wedge \iind$.
        \item We can extract from the definition of \textit{bi-invariant} types in \cite{hanson2023bi} a new independence relation $\ind^{\text{bi}}$ defined by $\iind \wedge (\iind)^{\text{opp}}$. 
    \end{enumerate}
\end{example}
We often make use of the following terminology:
\begin{definition}
    We say that independence relations $\ind^1$ and $\ind^2$ are \textbf{compatible} if $\ind^{1 \wedge 2}$ satisfies full existence. 
\end{definition}
\begin{example}
    \cite[Lemma 6.1.8]{miguel2024classification} shows that, in $T_{\mathbf{H}_4\text{-free}}$, $\ind^{\text{ht}}$ and $\iind$ are compatible. 
\end{example}
Inspired by the presentation of strict coheir independence from \cite{kim2022some}, we introduce the following abstract operation:
\begin{definition}
    Given two independence relations $\ind^1$ and $\ind^2$, we define \textbf{$\ind^2$-strict $\ind^1$} to be $(\ind^1 \wedge \: (\ind^2)^{\text{opp}})^*$. That is, $a$ is $\ind^2$-strict $\ind^1$ from $b$ over $D$ if, for all $c$, there is $c' \equiv_{Db} c$ such that $a \ind^1_D bc'$ and $bc' \ind_D^2 a$.
\end{definition}
\begin{lemma} \thlabel{strictness-props}
Let $\ind^1$ and $\ind^2$ be independence relations.
    \begin{enumerate}[(i)]
        \item $\ind^2$-strict $\ind^1$ is an independence relation and we have $\ind^2$-strict $\ind^1 \implies \ind^1$ and $\ind^2$-strict $\ind^1 \implies (\ind^2)^{\textnormal{opp}}$.
        \item If $\ind^1 = \ind^2$ and it satisfies symmetry, then $\ind^2$-strict $\ind^1 = (\ind^1)^*$.
        \item If $\ind^1$ has right (resp., left) monotonicity and $\ind^2$ has left (resp., right) monotonicity, then $\ind^2$-strict $\ind^1$ has right (resp., left) monotonicity. If both sides hold, then $\ind^2$-strict $\ind^1$ has right extension and right normality. 
        \item If $\ind^1$ has right base monotonicity and $\ind^2$ has left base monotonicity, then $\ind^2$-strict $\ind^1$ has right base monotonicity. 
        \item If $\ind^1$ has left transitivity and $\ind^2$ has right transitivity, then $\ind^2$-strict $\ind^1$ has left transitivity. 
        \item If $\ind^1 \implies \ind^3$ or $\ind^2 \implies \ind^4$, then $\ind^2$-strict $\ind^1 \implies \ind^4$-strict $\ind^3$.
    \end{enumerate}
\end{lemma}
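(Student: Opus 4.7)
The strategy is to decompose $\ind^2$-strict $\ind^1 = (\ind^1 \wedge (\ind^2)^{\textnormal{opp}})^*$ and track each axiom through the three operations. Recall how they interact: $(\cdot)^{\textnormal{opp}}$ swaps left/right versions of any axiom by \thref{properties-of-opp}(iii), the conjunction preserves left, right, and two-sided axioms by \thref{conjunction-props}(ii), and $(\cdot)^*$ preserves the right-sided axioms on the list of \thref{ind-star-props}(iii). Once this bookkeeping is in place, each clause reduces to one of these observations, with the exception of the right extension clause in (iii), which requires a short additional argument.

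Part (i) follows immediately from \thref{conjunction-props}(i) and \thref{ind-star-props}(i). For (ii), symmetry of $\ind^1 = \ind^2$ gives $\ind^1 = (\ind^1)^{\textnormal{opp}}$ by \thref{properties-of-opp}(iv), so the conjunction collapses to $\ind^1$ and the expression becomes $(\ind^1)^*$. Parts (iii), (iv), and (v) all follow the same pattern: a right-sided (resp.\ left-sided) hypothesis on $\ind^1$ together with the matching left-sided (resp.\ right-sided) hypothesis on $\ind^2$ guarantees---after applying $(\cdot)^{\textnormal{opp}}$ to the latter---that both operands of the conjunction satisfy the desired right-sided (resp.\ left-sided) version of the axiom; so does the conjunction by \thref{conjunction-props}(ii), and so does the star by \thref{ind-star-props}(iii). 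For the ``both sides'' clause of (iii), monotonicity of the inner relation passes through $(\cdot)^*$ by the same citation; the standard fact that $(\cdot)^*$ produces right extension from any relation with right monotonicity gives the extension claim; and right normality of any $\ind^*$ is immediate from the definition together with $\Aut(\M)$-invariance, since given $A \ind^*_C B$ and $B^* \supseteq CB$, the very witness produced by $\ind^*$-independence at $B^*$ already has the right type over $CB$. Part (vi) chains \thref{conjunction-props}(i) with the observation that $\ind^2 \implies \ind^{2'}$ forces $(\ind^2)^{\textnormal{opp}} \implies (\ind^{2'})^{\textnormal{opp}}$ (the ``in particular'' clause of \thref{properties-of-opp}(ii)) and the analogous implication preservation in \thref{ind-star-props}(ii).

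The one step not reducing to a direct citation is the right extension clause of (iii). The plan for that step is the standard pigeonhole argument: given $A \ind^*_C B$ and $B \subseteq B'$, enumerate cofinally a family $(B^*_i)_{i < \kappa}$ of supersets of $B'$ with $\kappa$ larger than the number of types over $CB'$, choose for each $i$ a witness $A^*_i \equiv_{CB} A$ with $A^*_i \ind_C B^*_i$, and pigeonhole on the boundedly many types over $CB'$ to extract a single $p \in S(CB')$ extending $\tp(A/CB)$ that is realised by $A^*_i$ for cofinally many $i$. Any realisation of $p$ is then the required $A'' \equiv_{CB'} A$ with $A'' \ind^*_C B'$, using right monotonicity of $\ind$ to push the independence from $B^*_i$ to an arbitrary prescribed $B^* \supseteq B'$. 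I expect this to be the only non-routine point in the proof; everything else is assembly of the previously established facts about $(\cdot)^{\textnormal{opp}}$, $(\cdot)^*$, and conjunction.
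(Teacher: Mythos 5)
Your decomposition and bookkeeping through $(\cdot)^{\textnormal{opp}}$, $\wedge$, and $(\cdot)^*$ is exactly the intended argument — the paper's own proof is literally ``combine the parts proven so far,'' so you have supplied strictly more detail, and parts (i), (ii), (iv), (v), (vi) and the monotonicity/normality claims of (iii) are all correct as you assemble them. The one step you rightly single out as non-routine, right extension for $\ind^*$ from right monotonicity of $\ind$, is the standard fact, but your sketch has a set-theoretic slip: there is no set-sized family $(B^*_i)_{i<\kappa}$ of small supersets of $B'$ that is literally cofinal (its union would have to be all of $\M$), and if the family is not a chain, ``cofinal in the index ordering'' does not let you find, for an arbitrary $B^* \supseteq B'$, an index $i$ in your pigeonholed set with $B^*_i \supseteq B^*$. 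The repair is standard and preserves your idea: for each $D \supseteq B'$ let $X_D \subseteq S(CB')$ be the set of types $p \supseteq \tp(A/CB)$ having a realisation $A''$ with $A'' \ind_C D$; each $X_D$ is nonempty (by $A \ind^*_C B$), the assignment is antitone under inclusion (right monotonicity), and $X_D$ depends only on $\tp(D/CB')$ by invariance. If $\bigcap_D X_D$ were empty one could choose, for each of the boundedly many $p \in X_{B'}$, some $D_p$ with $p \notin X_{D_p}$, and then $X_{D^*}$ for $D^* := \bigcup_p D_p$ would be empty, a contradiction; any $p$ in the intersection gives the required $A'' \equiv_{CB} A$ with $A'' \ind^*_C B'$. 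With that substitution your proof is complete.
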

\begin{proof}
    Combine all the different parts we have proven so far for each of the items in the above statement. 
\end{proof}
\begin{example} \thlabel{strict-relations}
    Several notions from the literature follow under this schema ($\kind$ is defined below):
    \begin{itemize}
        \item $\ind^{\text{st}}$ is defined by choosing both $\ind^1$ and $\ind^2$ to be $\find$.
        \item $\istind$ is defined by choosing $\ind^1$ to be $\iind$ and $\ind^2$ to be $\find$.
        \item $\kstind$ is defined by choosing $\ind^1$ to be $\iind$ and $\ind^2$ to be $\kind$. 
        \item $\ustind$ is defined by choosing $\ind^1$ to be $\uind$ and $\ind^2$ to be $\kind$.
    \end{itemize}
    All of the above relations satisfy monotonicity, right extension, and right normality. 

    Following the convention from the literature \cite{kruckman2024new}, we often say \textbf{Kim-strict $\ind^1$} instead of $\kind$-strict $\ind^1$.
\end{example}
Finally, let us introduce the following notion that originates with the work of Kim on simple theories \cite{kim1998forking} and is generalised by Adler in \cite{adler2009geometric}:
\begin{definition} \thlabel{morley-seqs-def}
    Let $\ind$ be an independence relation.
    \begin{enumerate}[(i)]
        \item We say $(a_i)_{i \in \omega}$ is an \textbf{$\ind$-independent sequence over $C$} if $a_i \ind_C a_{<i}$ for all $i \in \omega$.
        \item We say $(a_i)_{i \in \omega}$ is an \textbf{$\ind$-Morley sequence over $C$} if it is $C$-indiscernible and $\ind$-independent over $C$.
        \item Let $p$ be a global type. We say $(a_i)_{i \in \omega}$ is a \textbf{Morley sequence in $p$ over $C$} if $a_i \models p|_{Ca_{<i}}$ for all $i \in \omega$.
    \end{enumerate}
\end{definition}
The following folklore result will be used repeatedly without explicit mention:
\begin{fact}
    Suppose $\ind$ satisfies full existence and right monotonicity. For any tuple $a$ and set $C$, there exists an $\ind$-Morley sequence $(a_i)_{i < \omega}$ over $C$ with $a_0 = a$. 
\end{fact}
\section{Strong finite character} \label{sec:strong-finite-character}
In this section, we extend Adler's original programme by introducing a new operation to be understood as ``forcing strong finite character.'' As we will see, this allows us to formalise several intuitions about what occurs when forcing right extension, and provides the formal general framework for building relative Kim- and Conant-independence in the next section. 

Before doing this, let us introduce some terminology that will be useful. This expands the use of the term in \cite{chernikov2012forking} and unifies many properties of formulas which have appeared throughout neostability theory.
\begin{definition}\thlabel{our-freedom}
    We say a partial type $\pi(x)$ over $C$ is \textbf{$\ind$-free over $B$} if there is some $a \models \pi(x)$ such that $a \ind_B C$.
\end{definition}
\begin{lemma}
    Suppose $B \subseteq C$. A type $p(x) \in S(C)$ is $\ind$-free over $B$ iff, for all $a \models p$, we have $a \ind_B C$.
\end{lemma}
\begin{proof}
    ($\Rightarrow$) Suppose $p$ is $\ind$-free over $B$. Then there is some $a \models p$ such that $a \ind_B C$. Since $p$ is complete, for any $a' \models p$, $a' \equiv_{C} a$. By invariance, $a' \ind_B C$.

    ($\Leftarrow$) This follows immediately from the definition and the consistency of $p$.
\end{proof}
We will later see (\thref{sfc-and-free-types}) that, if $\ind$ satisfies strong finite character (\thref{properties-of-independence}(xii)), then the notion defined in \thref{our-freedom} coincides with what Chernikov and Kaplan call ``$\ind$-free'' in \cite{chernikov2012forking}.
\begin{example}
    The following characterisations are easy to check for a type $p(x) \in S(C)$ and a set $B \subseteq C$:
    \begin{enumerate}[(i)]
        \item $p(x)$ is $\uind$-free over $B$ iff it is finitely satisfiable in $B$. Moreover, a formula $\phi(x,c)$ is $\uind$-free over $B$ iff there is $b \in B$ such that $b \models \phi(x,c)$ (for the non-trivial direction, use the argument from, e.g., \cite[Lemma 8.1.3]{tent2012course}).
        \item $p(x)$ is $\iind$-free over $B$ iff it has a global extension Lascar-invariant over $B$. Moreover, it follows from \cite[Remark 2.19]{kruckman2024new} that a formula $\phi(x,c)$ is $\iind$-free over $B$ iff it does not quasi-fork over $B$.
        \item $p(x)$ is $\find$-free over $B$ iff it does not fork over $B$. Moreover, a formula $\phi(x,c)$ is $\find$-free over $B$ iff it does not fork over $B$ (see, e.g, \cite[Lemma 7.1.11]{tent2012course}).
    \end{enumerate}
\end{example}
Let us make a small digression to prove a new fact about $\aind$. For this, we need to introduce some terminology recalling and extending naturally \cite[Definition 2.1]{onshuus2006properties} (although the notion of strong dividing given in what follows is the updated version in \cite{hoffmann2023thorn}, which also appears in \cite{miguel2025stable}):
\begin{definition}
    Let $\phi(x,y)$ be a formula, $b$ a tuple, and $C$ a set of parameters. 
    \begin{enumerate}[(i)]
        \item We say $\phi(x,b)$ \textbf{strongly divides over $C$} if the set of images of $\phi(\M,b)$ under elements of $\Aut(\M/C)$ is infinite and $k$-inconsistent for some $k < \omega$.
        \item We say $\phi(x,b)$ \textbf{strongly forks over $C$} if there exist formulas $\psi_i(x,d_i)$ for $i < n$ such that $\phi(x,b) \vdash \bigvee_{i<n} \psi_i(x,d_i)$ and each $\psi_i(x,d_i)$ strongly divides over $C$.
    \end{enumerate}
\end{definition}
\begin{example}
    Let $T$ be the theory of infinite sets in the empty language. Pick two singletons $a,b$, and let $\phi(x,a,b)$ be the formula $x=a \vee x=b$. It is clear that each one of $x = a$ and $x = b$ strongly divides over $\varnothing$, and so by definition $\phi(x,a,b)$ strongly forks over $C$. Pick now a sequence of automorphisms $(\sigma_i)_{i \in \omega} \in \Aut(\M)$ such that $\sigma_i(ab)=ab_i$ where $b_i \neq b_j$ for all $i \neq j$. Then clearly $\bigcap_{i < \omega} \sigma_i(\phi(\M,a,b)) = \{a\}$, which shows that $\phi(x,a,b)$ does not strongly divide over $C$.
\end{example}
With this, we can provide a characterisation of $\aind$-free formulas, which is somewhat buried in Adler's proof of \cite[Proposition 4.7]{adler2008introduction}.
\begin{proposition} \thlabel{aind-free-and-strong-forking}
    Let $C$ be a set, $b$ be a tuple, and $\phi(x,y) \in \mathcal{L}(C)$. Then $\phi(x,b)$ is $\aind$-free over $C$ iff it does not strongly fork over $C$.
\end{proposition}
\begin{proof}
    ($\Rightarrow$) Let us first note that, by \cite[Proposition 4.7(3)$\Rightarrow$(4)]{adler2009geometric}, if a formula $\theta(x,c)$ strongly divides over $C$, then it is not $\aind$-free over $C$. Suppose now that $\phi(x,b)$ strongly forks over $C$. So there are formulas $\psi_i(x,d_i)$ for $i<n$ such that $\phi(x,b) \vdash \bigvee_{i<n} \psi_i(x,d_i)$ and each $\psi_i(x,d_i)$ strongly divides over $C$. Take $a \models \phi(x,b)$. By assumption, $a \models \psi_i(x,d_i)$ for some $i < n$, and by the above, this means $a \nind_C^{\text{a}} d_i$. So, by monotonicity, $a \nind_C^{\text{a}} bd_{<n}$. Since $a \models \phi(x,b)$ was arbitrary, it follows that, for any such $a$, we have $a \: (\nind_C^{\text{a}})^*\: b$, and thus $a \nind_C^{\text{a}} b$ by right extension. Therefore, $\phi(x,b)$ is not $\aind$-free over $C$.

    ($\Leftarrow$) We adapt the proof of \cite[Proposition 4.7(2)$\Rightarrow$(6)]{adler2008introduction}. Suppose $\phi(x,b)$ is not $\aind$-free over $C$. This means that, for every $d \models \phi(x,b)$, we have $d \nind_C^{\text{a}} b$.
    \begin{claim}
        There is a formula $\psi_d(x_d,b_d) \in \tp(d/Cb)$ that strongly divides over $C$.
    \end{claim}
    \begin{proof}[Proof of Claim]
        Since $d \nind^{\text{a}}_C b$, we can find some $e \notin \acl(C)$ such that $e \in \acl(Cd) \cap \acl(Cb)$. In particular, there is $\theta(x_d,y_d,z_d) \in \mathcal{L}$, a finite tuple $d' \subseteq d$, and a finite tuple $c \in C$ such that $\models \theta(e,d',c)$ and $\theta(x_d,d',c)$ is algebraic with $k$ many solutions. 

        Let $\psi_d(x_d,y_d,z_d) := \theta(x_d,y_d,z_d) \wedge \exists^{\leq k}w \: \theta(w,y_d,z_d)$. Then clearly $\models \psi_d(e,d',b)$, so that $\psi_d(e,y_d,c) \in \tp(d/Cb)$. Now observe that, since $e \notin \acl(C)$, the set of $C$-conjugates of $\psi_d(e,y_d,c)$ is infinite, and by definition, it is clearly $k$-inconsistent. Therefore, $\psi_d(e,y_d,c)$ strongly divides over $C$. Since by monotonicity we may assume $e \in Cb$, this completes the proof. \hfill \pushQED{$\qed_{\text{Claim}}$}
    \end{proof}
    Now, since $\{\phi(x,b)\} \cup \{\neg \psi_d(x_d,b_d) : d \models \phi(x,b)\}$ is inconsistent, there exist realisations $d_i$ of $\phi(x,b)$ for $i < n$ such that, after rewriting the variables,
    \begin{equation*}
        \phi(x,b) \vdash \bigvee_{i < n} \psi_{d_i}(x,b_{d_i}).
    \end{equation*}
    Therefore, $\phi(x,b)$ strongly forks over $C$.
\end{proof}
We often extend the terminology of $\ind$-freeness to global types as follows: for a small set $B$, we say a global type $p(x)$ is \textbf{$\ind$-free over $B$} if, for all small $B \subseteq C$ and $a \models p|_C$, we have $a \ind_B C$.
\begin{definition}
    Given an independence relation $\ind$, we define $\ind^{\text{sfc}}$ as follows:
    \begin{equation*}
        A \:{\ind_C}^{\text{sfc}}\: B \iff \text{for all } \phi(x,b) \in \tp(A/BC), \text{ there is } a' \models \phi(x,b) \text{ s.t. } a' \ind_C b. 
    \end{equation*}
\end{definition}
\begin{lemma} \thlabel{sfc-properties}
Let $\ind$ be an independence relation.
    \begin{enumerate}[(i)]
    \item $\ind^\sfc$ is an independence relation satisfying strong finite character and monotonicity. 
    \item If $\ind$ satisfies monotonicity, then $\ind \implies \ind^\sfc$.
    \item If $\ind^1 \implies \ind^2$ and $\ind^2$ satisfies strong finite character, then $(\ind^1)^\sfc \implies \ind^2$. In particular, if $\ind^1 \implies \ind^2$, then $(\ind^1)^{\sfc} \implies (\ind^2)^\sfc$. 
    \item If $\ind$ satisfies monotonicity, then $\ind = \ind^\sfc$ iff $\ind$ satisfies strong finite character. 
\end{enumerate}
\end{lemma}
\begin{proof}
    (i) Suppose $A \ind_C^{\text{sfc}} B$ and $\sigma \in \Aut(\M)$. Let $\phi(x,\sigma(b)) \in \tp(\sigma(A)/ \sigma(B) \sigma(C))$. Then $\phi(x,b) \in \tp(A/BC)$, and thus, by assumption, there is $a' \models \phi(x,b)$ such that $a' \ind_C b$. But then $\sigma(a') \models \phi(x,\sigma(b))$, and since $\ind$ is an independence relation, $\sigma(a') \ind_{\sigma(C)} \sigma(b)$. Therefore, $\sigma(A) \ind_{\sigma(C)}^{\text{sfc}} \sigma(B)$, as claimed.

    (Strong finite character) Suppose that $A \nind_C^{\text{sfc}} B$. By definition, there exists some $\phi(x,b) \in \tp(A/BC)$ such that, for all $a \models \phi(x,b)$, we have $a \nind_C b$. But notice that $\phi(x,b) \in \tp(a/Cb)$ for each such $a \models \phi(x,b)$. So, in fact, $a \nind_C^{\text{sfc}} b$.

    (Monotonicity) Suppose $A \ind_C^{\text{sfc}} B$ and $A' \subseteq A$, $B' \subseteq B$. Then $\tp(A'/B'C)$ is contained in the restriction of $\tp(A/BC)$ to the appropriate variables. In particular, if $\phi(x,b') \in \tp(A'/B'C)$, then $\phi(x,b') \in \tp(A/BC)$ and thus by assumption there is $a' \models \phi(x,b')$ such that $a' \ind_C b'$. Therefore, $A' \ind_C^{\text{sfc}} B'$.

    (ii) Suppose that $A \nind_C^{\text{sfc}} B$. So there exists a formula $\phi(x,b) \in \tp(A/BC)$ such that, for all $a' \models \phi(x,b)$, we have $a' \nind_C b$. Picking a tuple $a$ from $A$ such that $a \models \phi(x,b)$ and using monotonicity, it follows that $A \nind_C B$.

    (iii) Assume that $A \nind_C^2 B$. By strong finite character, there is a formula $\phi(x,b) \in \tp(A/BC)$ such that, for all $a' \models \phi(x,b)$, $a' \nind_C^2 b$. Thus, by assumption, $a' \nind_C^1 b$. Hence, by definition, $A \: (\nind_C^1)^{\text{sfc}} \: B$.

    (iv) From left to right, note first that, by (ii), we already have $\ind \implies \ind^{\sfc}$. Suppose now that $A \nind_C B$. By strong finite character, there exists some $\phi(x,b) \in \tp(A/BC)$ such that, for all $a' \models \phi(x,b)$, we have $a' \nind_C b$. But then by definition $A \nind_C^{\text{sfc}} B$. So $\ind = \ind^{\text{sfc}}$, as claimed. The converse follows from (i).
\end{proof}
The following formulation can be useful:
\begin{lemma} \thlabel{sfc-criterion}
    Let $\ind$ be an independence relation. Then $A \ind_C^{\textnormal{sfc}} B$ iff every formula $\phi(x,b) \in \tp(A/BC)$ is $\ind$-free over $C$.
\end{lemma}
\begin{proof}
    Simply note that $A \nind_C^{\text{sfc}} B$ iff there is a formula $\phi(x,b) \in \tp(A/BC)$ such that, for all $a \models \phi(x,b)$, $a \nind_C b$, iff there is $\phi(x,b) \in \tp(A/BC)$ which is not $\ind$-free over $C$.
\end{proof}
This allows us to generalise \cite[Remark 2.3]{hoffmann2023thorn}:
\begin{corollary} \thlabel{aind-and-strongly-dividing}
    In any theory, the following are equivalent: 
    \begin{enumerate}[(i)]
        \item $A \aind_C B$.
        \item For some (equiv., any) enumeration $a$ of $A$, $\tp(a/BC)$ does not contain a formula that strongly divides over $C$.
        \item For some (equiv., any) enumeration $a$ of $A$, $\tp(a/BC)$ does not contain a formula that strongly forks over $C$.
    \end{enumerate}
\end{corollary}
\begin{proof}
    First, observe that $\aind = (\aind)^\sfc$ by \thref{ind-relations}(i) and \thref{sfc-properties}(iv). Thus, by \thref{sfc-criterion}, $A \aind_C B$ iff every formula in $\tp(a/BC)$ (for $a$ an enumeration of $A$) is $\aind$-free over $C$. By \thref{aind-free-and-strong-forking}, this is equivalent to no formula in $\tp(a/BC)$ strongly forking over $C$. The equivalence with no formula strongly dividing over $C$ follows from the Claim in the proof of \thref{aind-free-and-strong-forking}.
\end{proof}
There are two immediate and useful applications of this characterisation for the theory of abstract independence relations. The first one formalises the fact that strong finite character only affects types:
\begin{lemma} \thlabel{free-formulas}
    Suppose $\ind$ satisfies monotonicity. A formula $\phi(x,b)$ is $\ind$-free over $C$ iff it is $(\ind)^\sfc$-free over $C$.
\end{lemma}
\begin{proof}
    ($\Rightarrow$) This follows directly from \thref{sfc-properties}(ii). 

    ($\Leftarrow$) Suppose $\phi(x,b)$ is $\ind^\sfc$-free over $C$. This means there is some $a \models \phi(x,b)$ such that $a \ind^\sfc_C b$. By \thref{sfc-criterion}, every formula in $\tp(a/Cb)$ is $\ind$-free over $C$. Therefore, $\phi(x,b)$ is $\ind$-free over $C$.
\end{proof}
The second one shows the equivalence of our notion of ``being $\ind$-free'' with that of Chernikov and Kaplan \cite{chernikov2012forking} under the assumption of strong finite character:
\begin{lemma}\thlabel{sfc-and-free-types}
    Suppose $\ind$ satisfies strong finite character. A type $p(x) \in S(C)$ is $\ind$-free over $B \subseteq C$ iff, for every $B \subseteq D \subseteq C$ and every $a \models p|_D$, we have $a \ind_B D$.
\end{lemma}
\begin{proof}
    Assume, for contradiction, that $p(x)$ is $\ind$-free over $B$ but there is some $B \subseteq D \subseteq C$ and $a \models p|_D$ such that $a \nind_B D$. By strong finite character, there is a formula $\phi(x,d) \in \tp(a/D)$ such that $\phi(x,d)$ is not $\ind$-free over $B$. But $\phi(x,d) \in p(x)$, and so by \thref{sfc-criterion} we obtain a contradiction. 
\end{proof}
Another useful result about the sfc operation comes from its interaction with the star operation from \thref{ind-star}, which implicitly appears in \cite[Lemma 1.2]{adler2009thorn}. The core of the argument can be traced back to a general characterisation of the result of forcing strong finite character and extension:
\begin{lemma} \thlabel{sfc-star-criterion}
    Let $\ind$ be an independence relation. Then $A \: (\ind^\sfc)^*_C \: B$ iff, for every formula $\phi(x,b) \in \tp(A/BC)$, if $\phi(x,b) \vdash \bigvee_{i < n} \psi_i(x,d_i)$, then there is some $i < n$ such that $\psi_i(x,d_i)$ is $\ind$-free over $C$.
\end{lemma}
\begin{proof}
    ($\Rightarrow$) Suppose there is some $\phi(x,b) \in \tp(A/BC)$ and formulas $\psi_i(x,d_i)$ for $i < n$ such that $\phi(x,b) \vdash \bigvee_{i < n} \psi_i(x,d_i)$ and no $\psi_i(x,d_i)$ is $\ind$-free over $C$. Pick $A' \equiv_{BC} A$. Then, if $a'$ is the corresponding subtuple of $A'$, we have $a' \models \phi(x,b)$, and so $a' \models \psi_i(x,d_i)$ for some $i < n$. So $\psi_i(x,d_i) \in \tp(A'/CBd_0\dots d_{n-1})$, and so by \thref{sfc-criterion} $A' \nind_C^\sfc Bd_0 \dots d_{n-1}$. Therefore, $A \: (\nind^\sfc)^*_C \: B$. 

    ($\Leftarrow$) Suppose that $A \: (\nind^\sfc)^*_C \: B$. This means there is some $D \supseteq B$ such that, for all $A' \equiv_{BC} A$, we have $A' \nind^\sfc_C D$. By \thref{sfc-criterion}, for each $A' \equiv_{BC} A$, we can find some $\psi_{A'}(x_{A'}, d_{A'}) \in \tp(A'/CD)$ that is not $\ind$-free over $C$.

    Now, the set $\tp(A/BC) \cup \{\neg \psi_{A'}(x_{A'},d_{A'}) : A' \models \tp(A/BC)\}$ is inconsistent. So, by compactness, we can find a formula $\phi(x,b) \in \tp(A/BC)$ and realisations $a_0,\dots, a_{n-1} \models \tp(A/BC)$ such that $\phi(x,b) \vdash \bigvee_{i < n} \psi_{a_i}(x_{a_i}, d_{a_i})$. Rewriting the variables in each formula, this gives $\phi(x,b) \vdash \bigvee_{i<n} \psi_{a_i}(x, d_{a_i})$, and each $\psi_{a_i}(x,d_{a_i})$ is not $\ind$-free over $C$. This proves the claim.
\end{proof}
\begin{lemma} \thlabel{commuting-star-and-sfc}
    Suppose that $\ind$ satisfies monotonicity. Then $(\ind^*)^{\textnormal{sfc}} = (\ind^{\textnormal{sfc}})^*$.
\end{lemma}
\begin{proof}
    ($\Rightarrow$) Suppose that $A \: (\ind^*)^\sfc_C \: B$. We will use \thref{sfc-star-criterion}. So assume that $\phi(x,b) \in \tp(A/BC)$ and there are formulae $\psi_i(x,d_i)$ for $i< n$ such that $\phi(x,b) \vdash \bigvee_{i < n} \psi_i(x,d_i)$. By assumption and \thref{sfc-criterion}, $\phi(x,b)$ is $\ind^*$-free over $C$. Thus, there is some $a' \models \phi(x,b)$ such that $a' \ind_C bd_0 \dots d_{n-1}$. But, by choice of formulae, we have $a' \models \psi_i(x,d_i)$ for some $i < n$. So, by monotonicity, $\psi_i(x,d_i)$ is $\ind$-free over $C$. Hence, by \thref{sfc-star-criterion}, $A \: (\ind^\sfc)^*_C \: B$.


    ($\Leftarrow$) Suppose that $A \: (\nind^*)^\sfc_C \: B$. So there is a formula $\phi(x,b) \in \tp(A/BC)$ such that, for all $a' \models \phi(x,b)$, we have $a' \nind_C^* b$. In particular, for $a$ the corresponding subtuple of $A$, we have $a \nind_C^* b$. So there is some $D \supseteq b$ such that, for all $a' \equiv_{Cb} a$, we have $a' \nind_C bD$, and so this holds in particular for all $a' \models \phi(x,b)$. Thus, for any such $a'$, $a' \nind_C^\sfc bD$. Hence, $a \: (\nind^\sfc)^*_C \: b$. Therefore, by monotonicity, $A \: (\nind^\sfc)^*_C \:B$, as claimed.
\end{proof}
With this, we can recover Adler's result (cf., \thref{ind-star-props}(v)) as a special case:
\begin{proposition}[\protect{\cite[Lemma 1.2]{adler2009thorn}}]
    Suppose $\ind$ satisfies monotonicity. If $\ind = \ind^{\textnormal{sfc}}$, then $\ind^* = (\ind^*)^{\textnormal{sfc}}$. 
\end{proposition}
Our result also extends Adler's result in the following way:
\begin{proposition}\thlabel{sfc-preserves-right-ext}
    Suppose $\ind$ satisfies monotonicity. If $\ind = \ind^*$, then $\ind^{\textnormal{sfc}} = (\ind^{\textnormal{sfc}})^*$.

    Equivalently, if $\ind$ satisfies monotonicity and right extension, then $\ind^{\textnormal{sfc}}$ also satisfies right extension.
\end{proposition}
We can also provide a concrete description of how adding strong finite character and extension works on formulas:
\begin{proposition} \thlabel{sfc-star-free-formula}
    Suppose $\ind$ satisfies monotonicity. A formula $\phi(x,b)$ is $(\ind^\sfc)^*$-free over $C$ iff there are no formulae $\psi_i(x,d_i)$ for $i < n$ such that $\phi(x,b) \vdash \bigvee_{i < n} \psi_i(x,d_i)$ and no $\psi_i(x,d_i)$ is $\ind$-free over $C$.
\end{proposition}
\begin{proof}
    ($\Rightarrow$) Suppose $\phi(x,b)$ is $(\ind^\sfc)^*$-free over $C$. Let $a \models \phi(x,b)$ be such that $a \: (\ind^\sfc)^*_C \: b$. By \thref{sfc-star-criterion}, as $\phi(x,b) \in \tp(a/Cb)$, we conclude that, whenever $\phi(x,b) \vdash \bigvee_{i<n} \psi_i(x,d_i)$, there is some $i < n$ such that $\psi_i(x,d_i)$ is $\ind$-free over $C$. 

    ($\Leftarrow$) We adapt the classical proof for non-forking (cf., \cite[Lemma 7.1.11]{tent2012course}). Suppose that the right-hand side holds. Let $\pi(x,b)$ be a maximal partial type over $Cb$ containing $\phi(x,b)$ such that every formula in $\pi$ satisfies the right-hand side. 
    
    We claim that $\pi(x,b)$ is complete. Indeed, assume, for contradiction, that it is not. Then there is a formula $\psi(x,b') \in \mathcal{L}(Cb)$ such that $\psi(x,b'), \neg \psi(x,b') \notin \pi(x,b)$. By definition of $\pi$, this means that there exist formulae $\theta_i(x,d_i)$ for $i < n$ and $\rho_j(x,e_j)$ for $j < m$ such that $\psi(x,b') \vdash \bigvee_{i<n} \theta_i(x,d_i)$, $\neg \psi(x,b') \vdash \bigvee_{j < m} \rho_j(x,e_j)$, and no $\theta_i$, $\rho_j$ is $\ind$-free over $C$. Now, $\pi(x,b) \vdash \psi(x,b') \vee \neg \psi(x,b')$. By compactness, there is some $\eta(x,b'') \in \pi(x,b)$ such that $\eta(x,b'') \vdash \psi(x,b') \vee \neg \psi(x,b')$. But then $\eta(x,b'') \vdash \bigvee_{i <n} \theta_i(x,d_i) \vee \bigvee_{j < m} \rho_j(x,d_j)$, contradicting our definition of $\pi$.

    Since $\pi(x,b)$ is complete, by \thref{sfc-star-criterion}, for any $a \models \pi(x,b)$ we have $a \: (\ind^\sfc)^*_C \: b$. Since $\phi(x,b) \in \pi(x,b)$, this means that $\phi(x,b)$ is $(\ind^\sfc)^*$-free over $C$, as claimed.
\end{proof}
\begin{corollary}
    Suppose $\ind$ satisfies monotonicity. A formula $\phi(x,b)$ is $\ind^*$-free over $C$ iff there are no formulae $\psi_i(x,d_i)$ for $i<n$ such that $\phi(x,b) \vdash \bigvee_{i < n} \psi_i(x,d_i)$ and no $\psi_i(x,d_i)$ is $\ind$-free over $C$.
\end{corollary}
\begin{proof}
    By \thref{sfc-star-free-formula}, it suffices to show that a formula is $\ind^*$-free over $C$ iff it is $(\ind^*)^\sfc$-free over $C$. But this follows from \thref{free-formulas}.
\end{proof}
\section{Relative Kim- and Conant-independence} \label{sec:rel-kim-and-conant}
We now move on to the main objects of study in this paper: the relativised versions of Kim- and Conant-independence. Both originate in the work of Mutchnik for more restricted cases: in \cite{mutchnik2024conant}, $\ind$-Kim-independence is defined whenever $\ind$ is an independence relation satisfying full existence, monotonicity, and stationarity over models, and \cite{mutchnik2025nsop,mutchnik2024conant} introduce $\ind$-Conant-independence for $\ind = \iind,\uind$. In this section, we want to generalise these notions to any choice of independence relation. 

However, our presentation will be different to what is usual in the current literature on Kim-independence (cf., \cite{kaplan2020kim,mutchnik2024conant}). Instead of defining, e.g., $\ind$-Kim-independence as a primitive notion and then noting that the ``basic characterisation'' of $\ind$-Kim-dividing holds (cf., \cite[Lemma 3.18]{kaplan2020kim}), we will start with the lifting of indiscernibility for $\ind$-Morley sequences as an independence relation and build out of this $\ind$-Kim-independence. This procedure is structurally analogous to Adler's abstract definition of $\dind$ and $\find$ and his clarification of their relationship in \cite{adler2009geometric}.

For the benefit of the reader, let us note that this level of detail is presented for conceptual clarification; for the rest of this paper, it is enough for the reader to take \thref{properties-of-kim-independence,conant-ind-defined-right} as definitions instead of results.
\begin{remark}
    All the independence relations we will introduce from this point onwards will be defined only over models. This requires a suitable modification of the properties listed in \thref{properties-of-independence}. This is standard, but we explicitly describe it. For monotonicity, extension, normality, strong finite character, quasi-strong finite character, and symmetry, the version over models is clear. For (left) transitivity, we need to choose models $M \prec N \models T$, and we recast the property as ``if $N \ind_M A$ and $B \ind_N A$, then $NB \ind_M A$''. For (left) base monotonicity, we again need to take models $M \prec N$ which are contained in a set $A$, and redefine this as ``if $A \ind_M B$, then $A \ind_N B$''. The case of local character is more subtle; we will introduce in \S\ref{sec:chain-local-character} the version of local character over models that we will use.
\end{remark}
\subsection{Relative Kim-independence}
We begin by introducing the following independence relation, inspired by \cite[Lemma 1.4]{shelah1980simple} (observe the similarity with \cite[\S5]{adler2008introduction}):
\begin{definition}
    Let $\ind$ be an independence relation. We define $\ind^+$ as follows: $A \ind_M^+ B$ iff, for every $\ind$-Morley sequence $I$ over $M$ starting at $B$, there is some $I' \equiv_{MB} I$ which is $MA$-indiscernible.
\end{definition}
Observe that this definition implicitly assumes the (easy) fact that the choice of enumeration for $B$ does not matter. From now on, we will omit explicit mention of underlying enumerations whenever there is no confusion.
\begin{lemma} \thlabel{properties-of-ind-plus}
    Let $\ind$ be an independence relation. Then $\ind^+$ is an independence relation satisfying existence and left normality.
\end{lemma}
\begin{proof}
    We first check that $\ind^+$ is an independence relation. Suppose that $A \ind_M^+ B$ and $\sigma \in \Aut(\M)$. Let $(B'_i)_{i < \omega}$ be an $\ind$-Morley sequence over $\sigma(M)$ with $B'_0 = \sigma(B)$. Then $(\sigma^{-1}(B'_i))_{i < \omega}$ is $\ind$-Morley over $M$ and $\sigma^{-1}(B'_0) = B$. Since $A \ind_M^+ B$, we can find some $A' \equiv_{MB} A$ such that $(\sigma^{-1}(B'_i))_{i<\omega}$ is $MA'$-indiscernible. Hence, $\sigma(A') \equiv_{\sigma(M)\sigma(B)} \sigma(A)$ and $(B'_i)_{i < \omega}$ is $\sigma(M)\sigma(A')$-indiscernible. Therefore, we get $\sigma(A) \ind_{\sigma(M)}^+ \sigma(B)$.

    (Existence) Suppose $(M_i)_{i < \omega}$ is $\ind$-Morley over $M$ and $M_0 = M$. By indiscernibility, it follows that $M_i = M$ for all $i < \omega$. So clearly, for any $A$, the sequence $(M_i)_{i < \omega}$ is $MA$-indiscernible. Therefore, $A \ind_M^+ M$.

    (Left normality) This is immediate. 
\end{proof}
Of course, if $\ind$-Morley sequences do not exist, then the above independence relation is trivial. We often add the assumptions of full existence and monotonicity to ensure non-triviality.
\begin{lemma} \thlabel{dividing-for-partial-types}
    If $\pi(x,b)$ is $\ind^+$-free over $M$, then, for every $\ind$-Morley sequence $(b_i)_{i < \omega}$ over $M$ with $b_0 = b$, the set $\bigcup_{i < \omega} \pi(x,b_i)$ is consistent. 
\end{lemma}
\begin{proof}
    Suppose $\pi(x,b)$ is $\ind^+$-free over $M$. Let $I = (b_i)_{i < \omega}$ be $\ind$-Morley over $M$ with $b_0 = b$. Since $\pi(x,b)$ is $\ind^+$-free, we can find some $c \models \pi(x,b)$ such that $c \ind_M^+ b$. By an automorphism, this gives some $c' \equiv_{Mb} c$ such that $I$ is $Mc'$-indiscernible. But $c' \models \pi(x,b)$, and so $c' \models \bigcup_{i < \omega} \pi(x,b_i)$ by indiscernibility. 
\end{proof}
\begin{lemma} \thlabel{dividing-for-complete-types}
    A type $p(x,b) \in S(Mb)$ is $\ind^+$-free over $M$ iff, for every $\ind$-Morley sequence $(b_i)_{i < \omega}$ over $M$ with $b_0 = b$, the set $\bigcup_{i < \omega} p(x,b_i)$ is consistent.
\end{lemma}
\begin{proof}
    ($\Rightarrow$) This is \thref{dividing-for-partial-types}.

    ($\Leftarrow$) Let $I = (b_i)_{i < \omega}$ be a $\ind$-Morley sequence over $M$ with $b_0 = b$. By assumption, $\bigcup_{i < \omega} p(x,b_i)$ is consistent. Let $a' \models \bigcup_{i < \omega} p(x,b_i)$. By Ramsey, compactness, an automorphism and the completeness of $p$, we may assume $I$ is $Ma'$-indiscernible. Moreover, by assumption, $a' \models p(x,b)$. So $p$ is $\ind^+$-free over $M$.
\end{proof}
\begin{proposition} \thlabel{sfc-for-ind-plus}
    If $\ind$ satisfies monotonicity, then $\ind^+$ satisfies strong finite character.
\end{proposition}
\begin{proof}
    We use \thref{sfc-criterion}. Suppose that $A \nind_M^+ B$. Fix an enumeration $a$ of $A$ and $b$ of $B$ and let $p(x,y) := \tp(ab/M)$. Then $p(x,b)$ is not $\ind^+$-free over $M$. By \thref{dividing-for-complete-types}, there is some $\ind$-Morley sequence $(b_i)_{i < \omega}$ over $M$ with $b_0 = b$ such that $\bigcup_{i < \omega} p(x,b_i)$ is inconsistent. By compactness, there is a formula $\phi(x,b') \in p(x,b)$ with $b' \subseteq b$ finite such that $\bigcup_{i < \omega} \{\phi(x,b'_i)\}$ is inconsistent, where $b'_i$ denotes the restriction of $b_i$ to the appropriate variables. Therefore, by monotonicity and \thref{dividing-for-partial-types}, $\phi(x,b')$ is not $\ind^+$-free over $M$. Therefore, $A \: (\nind^+)^{\text{sfc}}_M \: B$.
\end{proof}
\begin{lemma}\thlabel{ind-kim-fork}
    Suppose that $\ind$ satisfies monotonicity. A formula $\phi(x,b)$ is $(\ind^+)^*$-free over $M$ iff, for every $\psi_i(x,d_i)$ for $i < n$ such that $\phi(x,b) \vdash \bigvee_{i < n} \psi_i(x,d_i)$, there is some $i < n$ such that, for every $\ind$-Morley sequence $(d_{i,j})_{j<\omega}$ over $M$ with $d_{i,0}  = d_i$, the set $\{\psi_i(x,d_{i,j}) : j < \omega\}$ is consistent. 
\end{lemma}
\begin{proof}
    ($\Rightarrow$) By \thref{sfc-for-ind-plus,sfc-star-free-formula}, we have that $\phi(x,b)$ is $(\ind^+)^*$-free over $M$ iff, for every $\psi_i(x,d_i)$ such that $\phi(x,b) \vdash \bigvee_{i < n} \psi_i(x,d_i)$, there is $i < n$ such that $\psi_i(x,d_i)$ is $\ind^+$-free over $C$. By \thref{dividing-for-partial-types}, it follows that, for every $\ind$-Morley sequence $(d_{i,j})_{j < \omega}$ over $M$ with $d_{i,0} = d_i$, the set $\{\psi_i(x,d_{i,j}) : j < \omega\}$ is consistent.  

    ($\Leftarrow$) Suppose that $\phi(x,b)$ is not $(\ind^+)^*$-free over $M$. Let $a \models \phi(x,b)$. Then $a \: (\nind_M^+)^* \: b$, so there is some $d$ such that $a \nind_M^+ bd$. Let $p_a(x,b,d) := \tp(a/Mbd)$. By \thref{dividing-for-complete-types}, there is an $\ind$-Morley sequence $(b_id_i)_{i < \omega}$ over $M$ with $b_0 = b$ such that $\bigcup_{i < \omega} p_a(x,b_i,d_i)$ is inconsistent. By compactness, we can find some formula $\psi_a(x,b_a,d_a) \in p_a(x,b,d)$ with $b_a \subseteq b$, $d_a \subseteq d$ such that $\{\psi_a(x,b_{a,i},d_{a,i}) :i < \omega\}$ is inconsistent, where $b_{a,i}d_{a,i}$ denotes the restriction of $b_id_i$ to the appropriate variables. 

    Now, the set $\{\phi(x,b)\} \cup \{\neg \psi_a(x,b_a,d_a) : a \models \phi(x,b)\}$ is inconsistent. By compactness, we can thus find realisations $a_0, \dots, a_{n-1} \models \phi(x,b)$ such that 
    \begin{equation*}
        \phi(x,b) \vdash \bigvee_{i < n} \psi_{a_i}(x,b_{a_i},d_{a_i}).
    \end{equation*}
    This proves the claim.
\end{proof}
\begin{definition}
Let $\ind$ be an independence relation. 
\begin{enumerate}[(i)]
    \item We define \textbf{$\ind$-Kim-independence} to be $(\ind^+)^*$.
    \item We say $\phi(x,b)$ \textbf{$\ind$-Kim-divides over $M$} if there is an $\ind$-Morley sequence $(b_i)_{i < \omega}$ over $M$ with $b_0 = b$ such that $\{\phi(x,b_i) : i < \omega\}$ is inconsistent.
    \item We say $\phi(x,b)$ \textbf{weakly $\ind$-Kim-divides over $M$} if it is not $\ind^+$-free over $M$.
    \item We say $\phi(x,b)$ \textbf{$\ind$-Kim-forks over $M$} if it is not $(\ind^+)^*$-free over $M$.
\end{enumerate}
\end{definition}
\begin{lemma} \thlabel{properties-of-kim-independence}
    Let $\ind$ be an independence relation satisfying monotonicity.
    \begin{enumerate}[(i)]
        \item If $\phi(x,b)$ $\ind$-Kim-divides over $M$, then it weakly $\ind$-Kim-divides over $M$.
        \item A formula $\phi(x,b)$ $\ind$-Kim-forks over $M$ iff there exist formulae $\psi_i(x,d_i)$ for $i < n$ such that $\phi(x,b) \vdash \bigvee_{i < n} \psi_i(x,d_i)$ and each $\psi_i(x,d_i)$ $\ind$-Kim-divides over $M$.
        \item We have $A$ is $\ind$-Kim-independent from $B$ over $M$ iff, for some (equiv., any) enumeration $a$ of $A$, $\tp(a/MB)$ does not contain any formula that $\ind$-Kim-forks over $M$.
    \end{enumerate}
\end{lemma}
\begin{proof}
    (i) This follows directly from \thref{dividing-for-partial-types}.

    (ii) This is a rephrasing of \thref{ind-kim-fork}.

    (iii) This follows from strong finite character for $(\ind^+)^*$ by \thref{sfc-criterion}.
\end{proof}
\begin{example} \thlabel{kim-independence}
    We refer to $\iind$-Kim-independence as \textbf{Kim-independence}, and denote it by $\kind$. We will later show that this notion coincides with that first introduced in \cite{kaplan2020kim} and which we already defined in \thref{ind-relations}.
\end{example}
\begin{remark}
    We extend the terminology above for complete types as follows: we say $p(x) \in S(Mb)$ \textbf{$\ind$-Kim-divides over $M$} (resp., \textbf{$\ind$-Kim-forks over $M$}) if it is not $\ind^+$-free (resp., $(\ind^+)^*$-free) over $M$. Using \thref{dividing-for-complete-types} and monotonicity of $\ind$, it is easy to show that $p(x)$ $\ind$-Kim-divides over $M$ iff there is a formula $\phi(x,b') \in p(x)$ such that $\phi(x,b')$ $\ind$-Kim-divides over $M$. 
\end{remark}
The notion of ``weak $\ind$-Kim-dividing'' is new and arises from the attempt to build $\ind$-Kim-independence directly from the condition of lifting indiscernibility. It would be very satisfactory if the notions of $\ind$-Kim-dividing and weak $\ind$-Kim-dividing coincided. However, this only happens in the limit case when $\ind$-Kim-dividing coincides with $\ind$-Kim-forking:
\begin{proposition}\thlabel{kim-dividing-and-freeness}
    Let $\ind$ be an independence relation satisfying monotonicity. The following are equivalent:
    \begin{enumerate}[(i)]
        \item If $\pi(x,b)$ weakly $\ind$-Kim-divides over $M$, then it $\ind$-Kim-divides over $M$. 
        \item There is an independence relation $\ind^\oplus$ such that $\pi(x,b)$ does not $\ind$-Kim-divide over $M$ iff it is $\ind^\oplus$-free over $M$. 
        \item If $\pi(x,b)$ does not $\ind$-Kim-divide over $M$, then there is a complete type $p(x) \in S(Mb)$ containing $\pi(x,b)$ such that $p(x)$ \textbf{does not $\ind$-Kim-divide over $M$}, i.e., for every $\ind$-Morley sequence $(b_i)_{i < \omega}$ over $M$ with $b_0 = b$, the set $\bigcup_{i < \omega} p(x,b_i)$ is consistent.
        \item If $\pi(x,b)$ $\ind$-Kim-forks over $M$, then it $\ind$-Kim-divides over $M$.
    \end{enumerate}
\end{proposition}
\begin{proof}
%
    ((i) $\Rightarrow$ (ii)) Clear by (i) and \thref{dividing-for-partial-types}.

    ((ii) $\Rightarrow$ (iii)) Suppose such an independence relation exists. 
    \begin{claim}
        $(\ind^\oplus)^\sfc = \ind^+$.
    \end{claim}
    \begin{proof}[Proof of Claim]
        ($\Rightarrow$) Suppose that $A \nind_M^+ B$. Let $p(x,b) := \tp(A/MB)$ (where $b$ enumerates $B$). By \thref{dividing-for-complete-types}, there is an $\ind$-Morley sequence $(b_i)_{i < \omega}$ over $M$ with $b_0 = b$ such that $\bigcup_{i<\omega} p(x,b_i)$ is inconsistent. By compactness, there is a formula $\phi(x,b') \in p(x,b)$ with $b' \subseteq b$ finite such that $\{\phi(x,b'_i) : i < \omega\}$ is inconsistent, i.e., $\ind$-Kim-divides over $M$. By our choice of independence relation, $\phi(x,b')$ is not $\ind^\oplus$-free over $M$. Hence $A \: (\nind^\oplus)^\sfc_M \: B$. 

        ($\Leftarrow$) Suppose $A \: (\nind^\oplus)^\sfc_M \: B$. By \thref{sfc-criterion}, there is some $\phi(x,b) \in \tp(A/MB)$ which is not $\ind^\oplus$-free over $M$. By definition, $\phi(x,b)$ $\ind$-Kim-divides over $M$. Hence, by \thref{dividing-for-partial-types}, $\phi(x,b)$ is not $\ind^+$-free over $M$. Therefore, by \thref{sfc-criterion,sfc-for-ind-plus}, $A \nind^+_M B$. \hfill \pushQED{$\qed_{\text{Claim}}$}
    \end{proof}
    Therefore, by \thref{free-formulas}, if $\pi(x,b)$ does not $\ind$-Kim-divide over $M$, then it is $\ind^\oplus$-free over $M$, hence also $(\ind^\oplus)^\sfc$-free over $M$. Thus, there is $a \models \pi(x,b)$ such that $a \: (\ind^\oplus)^\sfc_M \: b$. By the Claim, $a \ind_M^+ b$. By \thref{dividing-for-complete-types}, $\tp(a/Mb)$ is the type we wanted.
    
    ((iii) $\Rightarrow$ (iv)) Assume, for contradiction, that $\pi(x,b)$ $\ind$-Kim-forks but does not $\ind$-Kim-divide over $M$. By \thref{properties-of-kim-independence}(ii), there are formulae $\psi_i(x,d_i)$ for $i < n$ such that $\pi(x,b) \vdash \bigvee_{i < n} \psi_i(x,d_i)$ and each $\psi_i(x,d_i)$ $\ind$-Kim-divides over $M$. 
    
    Now, observe there is a partial type $\rho(x,b,d_{<n})$ such that $\pi(x,b) \leftrightarrow \rho(x,b,d_{<n})$ holds. Let $(b_id_{0,i} \dots d_{n-1,i})_{i < \omega}$ be an $\ind$-Morley sequence over $M$ starting at $bd_{<n}$. By monotonicity, $(b_i)_{i < \omega}$ is an $\ind$-Morley sequence over $M$ starting at $b$, and thus, by assumption, $\bigcup_{i < \omega} \pi(x,b_i)$ is consistent. Since $b_id_{<n,i} \equiv_M bd_{<n}$, it follows that $\pi(x,b_i) \leftrightarrow \rho(x,b_i,d_{<n,i})$ for each $i$. Therefore, $\bigcup_{i < \omega} \rho(x,b_i,d_{<n,i})$ is consistent. 

    By (iii), there is a complete type $q(x) \in S(Mbd_0 \dots d_{n-1})$ such that $\rho(x,b,d_{<n}) \subseteq q(x)$ and $q$ does not $\ind$-Kim-divide over $M$. By our choice of formulae, there is some $i < n$ such that $\psi_i(x,d_i) \in q(x)$. But then $q(x)$ does $\ind$-Kim-divide over $M$, a contradiction. 

    ((iv) $\Rightarrow$ (i)) Suppose $\pi(x,b)$ does not $\ind$-Kim-divide over $M$. By (iv), it does not $\ind$-Kim-fork over $M$, and so by definition it is $(\ind^+)^*$-free over $M$. In particular, $\pi(x,b)$ is $\ind^+$-free over $M$.
\end{proof}
Although the notions of $\ind$-Kim-dividing and weak $\ind$-Kim-dividing have different properties (cf., \thref{left-extension-when-kim-fork-eq-kim-div}), one of the goals of this paper is to show that both notions can equivalently be used in developing the structure theory of several neostability-theoretic classes via independence relations. We will make this point explicitly in the next section (\thref{tuwp-is-guwp-with-weak-kim-dividing,tuwp-and-guwp}).

We can also provide an explicit criterion for the equivalence of weak $\ind$-Kim-dividing and $\ind$-Kim-forking at the level of formulas:
\begin{lemma}\thlabel{weak-kim-div-and-kim-fork}
    Let $\ind$ be an independence relation satisfying monotonicity. The following are equivalent:
    \begin{enumerate}[(i)]
        \item If $\phi(x,b)$ $\ind$-Kim-forks over $M$, then it weakly $\ind$-Kim-divides over $M$. 
        \item If $p(x) \in S(Mb)$ and $p(x)$ $\ind$-Kim-forks over $M$, then $p(x)$ $\ind$-Kim-divides over $M$.  
    \end{enumerate}
\end{lemma}
\begin{proof}
    ((i) $\Rightarrow$ (ii)) Suppose $p(x)$ $\ind$-Kim-forks over $M$. By \thref{sfc-for-ind-plus} and definition, there is some $\phi(x,b) \in p(x)$ that $\ind$-Kim-forks over $M$. By (i), $\phi(x,b)$ weakly $\ind$-Kim-divides over $M$. Since $\phi(x,b) \in p(x)$, this means that $p(x)$ $\ind$-Kim-divides over $M$. 

    ((ii) $\Rightarrow$ (i)) Assume $\phi(x,b)$ does not weakly $\ind$-Kim-divide over $M$. Then there is some $a \models \phi(x,b)$ such that $a \ind^+_M b$, i.e., $\tp(a/Mb)$ does not $\ind$-Kim-divide over $M$. By (ii), $\tp(a/Mb)$ does not $\ind$-Kim-fork over $M$. Hence, by \thref{dividing-for-complete-types,commuting-star-and-sfc}, $\phi(x,b)$ does not $\ind$-Kim-fork over $M$. 
\end{proof}
We will discuss examples of this later on. First, we will establish several basic properties of $\ind$-Kim-independence as defined here. 
\begin{lemma} \thlabel{mon-and-strong-fin-char-are-preserved}
    For any independence relation $\ind$, $\ind$-Kim-independence satisfies monotonicity, strong finite character, existence, right extension, and normality. 
\end{lemma}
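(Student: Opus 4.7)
The plan is to handle each of the five properties in sequence, exploiting the fact that the definitions of $\ind$-Kim-dividing and $\ind$-Kim-forking are given directly by formulas, so most properties reduce to routine manipulations of types. I would not worry about the case where $\ind$ fails full existence, since then no formula $\ind$-Kim-divides and every property holds vacuously.

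\textbf{Strong finite character} follows immediately: if $a \nind^{K^\ind}_M b$, pick a witness $\phi(x,b) \in \tp(a/Mb)$ that $\ind$-Kim-forks over $M$; any $a'$ satisfying $\phi(x,b)$ has $\phi(x,b) \in \tp(a'/Mb)$, so $a' \nind^{K^\ind}_M b$. For \textbf{existence}, given a formula $\phi(x,m) \in \tp(a/M)$ with $m \in M$, any Morley sequence over $M$ in an extension of $\tp(m/M)$ is the constant sequence $(m,m,\dots)$, so $\{\phi(x,m_i) : i \in \omega\} = \{\phi(x,m)\}$ is consistent (realised by $a$); hence no formula $\ind$-Kim-divides, and therefore none $\ind$-Kim-forks. \textbf{Right monotonicity} is immediate from the inclusion $\tp(A/MB') \subseteq \tp(A/MB)$ whenever $B' \subseteq B$. \textbf{Left monotonicity} uses the ``any enumeration'' clause in \thref{relative-kim-ind}(iii): if $A' \subseteq A$ and $\phi(x',b) \in \tp(A'/MB)$ $\ind$-Kim-forks, the same formula (applied to the corresponding subtuple of variables of a chosen enumeration of $A$) lies in $\tp(A/MB)$, contradicting Kim-independence of $A$ from $B$.

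For \textbf{right extension}, the key point is that $\ind$-Kim-forking is closed under finite disjunctions by definition (if each $\phi_i \vdash \bigvee_j \psi_{i,j}$ with the $\psi_{i,j}$'s $\ind$-Kim-dividing, then $\bigvee_i \phi_i \vdash \bigvee_{i,j} \psi_{i,j}$). Given $A \ind^{K^\ind}_M B$ and $B \subseteq B'$, consider the partial type
\[
 \tp(A/MB) \cup \{\neg \psi(x) : \psi(x) \in \mathcal{L}(MB') \text{ and } \psi(x) \text{ $\ind$-Kim-forks over } M\}.
\]
Any finite inconsistency would yield a formula in $\tp(A/MB)$ implied by a finite disjunction of $\ind$-Kim-forking formulas, which is itself $\ind$-Kim-forking, contradicting $A \ind^{K^\ind}_M B$. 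Realising this type gives the desired $A'$.

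\textbf{Right normality} is immediate since $\tp(A/MMB) = \tp(A/MB)$. For \textbf{left normality}, suppose $A \ind^{K^\ind}_M B$ and let $ma$ enumerate $MA$. If some $\phi(y,x,b) \in \tp(ma/Mb)$ $\ind$-Kim-forks over $M$, substitute the constant $y = m \in M$ to obtain $\phi(m,x,b) \in \tp(a/Mb)$; a disjunction-witness $\phi(y,x,b) \vdash \bigvee_i \psi_i((y,x),c_i)$ specialises to $\phi(m,x,b) \vdash \bigvee_i \psi_i((m,x),c_i)$, and each $\psi_i((m,x),c_i)$ still $\ind$-Kim-divides over $M$ (an inconsistency-witness for the original formula in variables $(y,x)$ restricts to one in $x$ alone, since any realisation of the specialised formulas would yield a realisation of the originals by prepending $m$). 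This contradicts $A \ind^{K^\ind}_M B$. The only step requiring some care is this substitution argument; everything else is essentially bookkeeping.
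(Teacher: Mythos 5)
Your proposal matches the paper's proof almost step for step: the monotonicity, strong finite character, and right extension arguments are the same (the paper phrases right extension as a compactness argument producing $\phi(x,b) \vdash \bigvee_i \psi_i(x,d_i)$ with each $\psi_i$ $\ind$-Kim-forking and then unfolding forking into dividing, which is exactly your observation that $\ind$-Kim-forking is closed under finite disjunction), and where the paper outsources normality to \thref{ind-kim-dividing-characterisation} and a citation, you supply the direct arguments it says exist; your left-normality substitution (specialise $y$ to $m$ and prepend $m$ to any realisation to transfer inconsistency) is correct.

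Two caveats. First, your opening remark is false as stated: failure of full existence does not imply that no formula $\ind$-Kim-divides, since individual types over $M$ may still admit global $M$-$\ind$-free extensions. None of your arguments actually use full existence, so the remark is harmless, but it should be deleted rather than relied on. Second, in the existence step, the inference ``no formula over $M$ $\ind$-Kim-divides, therefore none $\ind$-Kim-forks'' is a non sequitur: a formula $\phi(x,m)$ could a priori imply a disjunction $\bigvee_i \psi_i(x,c_i)$ of $\ind$-Kim-dividing formulas whose parameters $c_i$ lie \emph{outside} $M$, and your constant-sequence argument says nothing about those. The standard repair is to extend $\tp(a/M)$ to a global type finitely satisfiable in $M$; some $\psi_i(x,c_i)$ must then belong to this type, hence is realised by an element of $M$, and such a formula cannot $\ind$-Kim-divide along any $M$-indiscernible sequence. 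The paper's own proof of existence stops at non-dividing and so elides the same point, so this is a shared (and easily fixed) gap rather than a defect peculiar to your write-up.
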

\begin{proof}
    This follows directly from combining \thref{properties-of-ind-plus,sfc-for-ind-plus,sfc-properties,ind-star-props}. Existence will follow from \thref{find-stronger-than-relative-kim}.
\end{proof}
\begin{lemma} \thlabel{implications-between-relative-kims}
    Let $\ind^1$ and $\ind^2$ be two independence relations and $M \models T$. If $\ind^1 \implies \ind^2$ and $\phi(x,b)$ (weakly) $\ind^1$-Kim-divides over $M$, then it (weakly) $\ind^2$-Kim-divides over $M$. In particular, $\ind^2$-Kim-independence implies $\ind^1$-Kim-independence. 
\end{lemma}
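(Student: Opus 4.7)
The plan is a short unpacking of definitions, relying on the fact that being $\ind$-free is a property that transfers along implications of independence relations.

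First I would observe the key transfer fact: if $q$ is a global type that is $M$-$\ind^1$-free and $\ind^1 \implies \ind^2$, then $q$ is also $M$-$\ind^2$-free. Indeed, fix $B$ and $a \models q|_{MB}$; by $M$-$\ind^1$-freeness we have $a \ind^1_M B$, and then $\ind^1 \implies \ind^2$ gives $a \ind^2_M B$, which is exactly the definition of $M$-$\ind^2$-freeness.

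Next, given that $\phi(x,b)$ $\ind^1$-Kim-divides over $M$, pick the witnessing $M$-$\ind^1$-free global extension $q \supset \tp(b/M)$ and an $M$-indiscernible Morley sequence $(b_i)_{i \in \omega}$ in $q$ over $M$ with $b_0 = b$ such that $\{\phi(x,b_i) : i \in \omega\}$ is inconsistent. By the previous paragraph, the same $q$ is also $M$-$\ind^2$-free, so the same sequence $(b_i)$ is a Morley sequence in an $M$-$\ind^2$-free extension of $\tp(b/M)$; by \thref{relative-kim-ind} (i), this is exactly what it means for $\phi(x,b)$ to $\ind^2$-Kim-divide over $M$.

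For the second sentence, I would argue contrapositively. Suppose $A$ is not $\ind^1$-Kim-independent from $B$ over $M$, witnessed by a formula $\phi(x,b) \in \tp(A/MB)$ that $\ind^1$-Kim-forks, say $\phi(x,b) \vdash \bigvee_{i<n} \psi_i(x,c_i)$ with each $\psi_i(x,c_i)$ $\ind^1$-Kim-dividing. By the first part, each $\psi_i(x,c_i)$ also $\ind^2$-Kim-divides over $M$, so $\phi(x,b)$ $\ind^2$-Kim-forks over $M$; hence $A$ is not $\ind^2$-Kim-independent from $B$ over $M$. There is no real obstacle here: everything reduces to the one-line observation about freeness, together with the syntactic definition of Kim-forking in terms of Kim-dividing.
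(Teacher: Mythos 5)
Your proposal is correct and follows essentially the same route as the paper: the key observation in both is that an $M$-$\ind^1$-free global type is automatically $M$-$\ind^2$-free when $\ind^1 \implies \ind^2$, so the same witnessing type and Morley sequence work. Your treatment of the second sentence is in fact slightly more careful than the paper's, since you explicitly unpack $\ind^1$-Kim-forking as a disjunction of $\ind^1$-Kim-dividing formulas before transferring each disjunct, whereas the paper elides this step.
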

\begin{proof}
    Suppose $\phi(x,b)$ $\ind^1$-Kim-divides over $M$. Let $(b_i)_{i < \omega}$ be an $\ind^1$-Morley sequence over $M$ such that $\{\phi(x,b_i) : i < \omega\}$ is inconsistent. As $\ind^1 \implies \ind^2$, it follows that $(b_i)_{i < \omega}$ is $\ind^2$-Morley over $M$, and thus $\phi(x,b)$ also $\ind^2$-Kim-divides over $M$.

    For the ``in particular'', we can directly use the previous paragraph together with \thref{properties-of-kim-independence}. For a more abstract proof, observe that, since $\ind^1 \implies \ind^2$, we have $(\ind^2)^+ \implies (\ind^1)^+$, and so by \thref{ind-star-props}(ii), $((\ind^2)^+)^* \implies ((\ind^1)^+)^*$, that is, $\ind^2$-Kim-independence implies $\ind^1$-Kim-independence. The implication between the ``weak'' versions of $\ind$-Kim-dividing admits of a similarly abstract proof.
\end{proof}
\begin{example}
    $\find$-Kim-independence $\implies$ Kim-independence $\implies \uind$-Kim-independence. 
\end{example}
The following is an example of a result that seems to require the assumption that ``$\ind$-Kim-forking = $\ind$-Kim-dividing'' and not just the weaker ``$\ind$-Kim-forking = weakly $\ind$-Kim-dividing'':
\begin{lemma} \thlabel{left-extension-when-kim-fork-eq-kim-div}
    Let $\ind$ be an independence relation satisfying full existence and monotonicity. Suppose that, for any formula $\phi(x,b)$ and any $M \models T$, $\phi(x,b)$ $\ind$-Kim-forks over $M$ iff it $\ind$-Kim-divides over $M$. Then $\ind$-Kim-independence satisfies left extension. 
\end{lemma}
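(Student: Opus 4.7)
The plan is to construct, given $a \ind^{\text{K}^{\ind}}_M b$ and $a \subseteq a'$, a realisation $b' \equiv_{Ma} b$ with $a' \ind^{\text{K}^{\ind}}_M b'$. The strategy is to produce the independence witness directly via Morley sequences: from one witnessing non-$\ind$-Kim-dividing of $\tp(a/Mb)$, extract an $Ma'$-indiscernible Morley sequence starting at the desired $b'$ which will witness non-$\ind$-Kim-dividing of $\tp(a'/Mb')$. The hypothesis equating $\ind$-Kim-forking with $\ind$-Kim-dividing at the formula level is what allows us to translate freely between the definition of $\ind$-Kim-independence (via Kim-forking) and the Morley-sequence criterion of \thref{ind-kim-dividing-characterisation} (via Kim-dividing).

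First I would fix an $M$-$\ind$-free extension $q$ of $\tp(b/M)$ (if none exists, $\ind$-Kim-independence over $M$ is trivial and the conclusion is vacuous). Using $a \ind^{\text{K}^{\ind}}_M b$, \thref{ind-kim-dividing-characterisation}, and an $Mb$-automorphism to replace $q$ by an appropriate conjugate (which is still an $M$-$\ind$-free extension of $\tp(b/M)$ by $\Aut(\M)$-invariance of $\ind$), I would obtain an $M$-indiscernible Morley sequence $I = (b_i)_{i < \kappa}$ in (the relabelled) $q$ with $b_0 = b$ that is additionally $Ma$-indiscernible; here $\kappa$ is taken sufficiently large for an Erd\H{o}s--Rado extraction over $Ma'$.

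Next I would apply the standard extraction lemma with base $Ma'$ to produce an $\omega$-sequence $J = (c_i)_{i < \omega}$ that is $Ma'$-indiscernible and whose EM-type over $Ma'$ is realised by finite subsequences of $I$. Since $M \subseteq Ma'$, $J$ is also $M$-indiscernible, and transfer of the EM-type over $M$ combined with the Morley property of $I$ in $q$ makes $J$ itself a Morley sequence in $q$. The length-one EM-type transfer combined with the $Ma$-indiscernibility of $I$ yields $c_0 \equiv_{Ma} b_j \equiv_{Ma} b$ for some $j < \kappa$, so $b' := c_0$ satisfies $b' \equiv_{Ma} b$. The sequence $J$ is then an $M$-indiscernible Morley sequence in $q$ (an $M$-$\ind$-free extension of $\tp(b'/M)$) starting at $b'$ and being $Ma'$-indiscernible; applying \thref{ind-kim-dividing-characterisation} to $a'$ and $b'$ gives that $\tp(a'/Mb')$ does not $\ind$-Kim-divide over $M$, and the hypothesis upgrades this to not $\ind$-Kim-forking, yielding $a' \ind^{\text{K}^{\ind}}_M b'$.

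The main obstacle is arranging that $c_0$ is $Ma$-conjugate, rather than merely $M$-conjugate, to $b$; a naive Ramsey extraction from an arbitrary $M$-indiscernible Morley sequence in $q$ would only guarantee $c_0 \equiv_M b$, which is insufficient for left extension. Appealing to the hypothesis $a \ind^{\text{K}^{\ind}}_M b$ at the very first step, to upgrade the starting sequence $I$ to be $Ma$-indiscernible, is what forces the extraction to respect the desired type over $Ma$. The formula-level equivalence of $\ind$-Kim-forking and $\ind$-Kim-dividing enters only at the final step, to convert the Morley-sequence conclusion back into the formula-level definition of $\ind$-Kim-independence.
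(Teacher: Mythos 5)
Your proposal has a genuine gap, and it occurs exactly where the hypothesis of the lemma is supposed to do its work. The conclusion you want, that $\tp(a'/Mb')$ does not $\ind$-Kim-divide over $M$, is a \emph{universal} statement over all global $M$-$\ind$-free extensions of $\tp(b'/M)$: by \thref{relative-kim-ind}, a formula Kim-divides if \emph{some} free extension witnesses inconsistency, and the ``(equiv., some)'' in \thref{ind-kim-dividing-characterisation} only ranges over Morley sequences \emph{within a fixed} free extension $q$, not over the choice of $q$. Exhibiting one $Ma'$-indiscernible Morley sequence in one free extension of $\tp(b'/M)$ therefore does not rule out that some formula of $\tp(a'/Mb')$ Kim-divides along a different free extension. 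In this generality (no witnessing assumption is in force), distinct free extensions genuinely can disagree about dividing, so this cannot be waved away. A second, independent problem: even for your fixed $q$, the Erd\H{o}s--Rado extraction only guarantees that each finite piece of $J$ is $Ma'$-conjugate to a finite subsequence of $I$; being a Morley sequence in the specific global type $q$ is not preserved under $M$-conjugation (unless $q$ is $M$-invariant), and the conjugating automorphism varies with the piece. Passing from ``$\ind$-Morley sequence over $M$'' back to ``Morley in some global $M$-$\ind$-free type'' requires \thref{collapse-of-sequences-and-types}, whose hypotheses (monotonicity, right extension, quasi-strong finite character) are not assumed in this lemma. The stretching of $I$ to length $\kappa$ while remaining Morley in $q$ and $Ma$-indiscernible has the same difficulty.

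The paper's proof avoids both issues by arguing at the level of formulas rather than sequences: one shows that $\tp(c/Ma)$ together with $\{\neg\phi(x,a,b) : \phi(x,y,b) \text{ $\ind$-Kim-divides over } M\}$ is consistent. If not, compactness produces a disjunction $\psi = \bigvee_i \phi_i$ of Kim-dividing formulas implied by $\tp(c/Ma)$; this disjunction Kim-\emph{forks}, and the hypothesis of the lemma is used precisely here to replace it by a single Kim-\emph{dividing} formula with a single witnessing Morley sequence $I$, against which the $Ma$-indiscernible conjugate supplied by $a \ind^{\text{K}^{\ind}}_M b$ yields a contradiction. A realisation $c'$ of the displayed partial type then realises no Kim-dividing formula over $Mb$ jointly with $a$ \emph{for any choice of witnessing free extension}, which is what non-Kim-dividing of $\tp(c'a/Mb)$ actually requires. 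In your write-up the forking-equals-dividing hypothesis only appears as a final bookkeeping step, which is a sign that the essential content of the lemma has been bypassed rather than proved.
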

\begin{proof}
    We adapt the proof of \cite[Claim 3.20]{chernikov2012forking}. Suppose that $a$ is $\ind$-Kim-independent from $b$ over $M$, and let $c$ be a tuple. By \thref{properties-of-kim-independence}(iii), it suffices to show that there is $c' \equiv_{Ma} c$ such that $\tp(c'a/Mb)$ does not $\ind$-Kim-divide over $M$. Let $p(x,a) := \tp(c/Ma)$. We will show that the set
    \begin{equation*}
        p(x,a) \cup \{\neg \phi(x,a',b') : \phi \in \mathcal{L}(M), \: a'\subseteq a, \: b' \subseteq b,\: \phi(x,y,b') \ind\text{-Kim-divides over } M\}
    \end{equation*}
    is consistent.

    Assume not, for contradiction. By compactness, $p(x,a) \vdash \bigvee_{i < \ell} \phi_i(x,a',b')$ and each $\phi_i(x,y,b')$ $\ind$-Kim-divides over $M$. Let $\psi(x,y,b) := \bigvee_{i < \ell} \phi_i(x,y,b')$. Then $\psi(x,y,b)$ $\ind$-Kim-forks over $M$, so by our assumption it $\ind$-Kim-divides over $M$. 

    Let $I = (b_i)_{i < \omega}$ be an $\ind$-Morley sequence over $M$ such that $\{\psi(x,y,b_i) : i < \omega\}$ is inconsistent. Since $a$ is $\ind$-Kim-independent from $b$ over $M$, in particular we have $a \ind_M^+ b$. So there exists a sequence $I'' = (b''_i)_{i < \omega}$ with $I'' \equiv_{Mb} I$ such that $I''$ is $Ma$-indiscernible. But $p(x,a) \vdash \psi(x,a,b)$. So, by indiscernibility, $p(x,a) \vdash \{\psi(x,a,b''_i) : i < \omega\}$, and thus $p(x,a)$ is inconsistent, a contradiction. 
\end{proof}
\begin{lemma} \thlabel{find-stronger-than-relative-kim}
    For any independence relation $\ind$, we have $\find \implies \ind$-Kim-independence.
\end{lemma}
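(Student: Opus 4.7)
The plan is to prove the contrapositive by showing that $\ind$-Kim-forking (in the sense of \thref{relative-kim-ind}) always implies ordinary forking. The key observation, which makes the argument essentially one-line, is that the witness in the definition of $\ind$-Kim-dividing is literally already a witness of ordinary dividing: an $M$-indiscernible Morley sequence $(b_i)_{i \in \omega}$ in a global extension $q$ of $\tp(b/M)$ with $b_0 = b$ is, in particular, an $M$-indiscernible sequence starting at $b$.

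More precisely, I would first argue that if $\phi(x,b)$ $\ind$-Kim-divides over $M$, then $\phi(x,b)$ divides over $M$ in the usual sense. Indeed, the definition of $\ind$-Kim-dividing gives us some $M$-$\ind$-free extension $q \supset \tp(b/M)$ and an $M$-indiscernible Morley sequence $(b_i)_{i \in \omega}$ in $q$ over $M$ with $b_0 = b$ such that $\{\phi(x, b_i) : i \in \omega\}$ is inconsistent; this is precisely the standard witness of ordinary dividing. (Note that, thanks to full existence of $\ind$, such a $q$ exists at all, so the definition is not vacuously satisfied.)

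Next, I would pass from dividing to forking: if $\phi(x,b)$ $\ind$-Kim-forks over $M$, then by definition $\phi(x,b) \vdash \bigvee_{i < n} \psi_i(x, c_i)$ with each $\psi_i(x, c_i)$ $\ind$-Kim-dividing over $M$; by the previous step each $\psi_i$ divides over $M$, so $\phi(x,b)$ forks over $M$.

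Finally, to conclude, suppose $A \nind^{\text{Kim}^{\ind}}_M B$. Then some $\phi(x, b) \in \tp(A/MB)$ $\ind$-Kim-forks over $M$, hence forks over $M$ by the above. Therefore $A \nfind_M B$, which is the desired contrapositive. I do not expect any step to be a genuine obstacle, since the whole argument reduces to unwinding definitions and noting that the class of witnessing sequences for $\ind$-Kim-dividing is contained in the class of witnessing sequences for ordinary dividing.
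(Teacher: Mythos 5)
Your proof is correct and rests on exactly the same key observation as the paper's: a Morley sequence witnessing $\ind$-Kim-dividing is in particular $M$-indiscernible, so $\ind$-Kim-dividing implies ordinary dividing. The only difference is bookkeeping — the paper passes from dividing to the level of $\find$ by citing \thref{ind-star-props}, whereas you carry out the (equally valid) two-line unwinding from $\ind$-Kim-forking to forking directly; your parenthetical about full existence is not actually needed for this step, since the witnessing extension $q$ exists by the very definition of $\ind$-Kim-dividing.
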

\begin{proof}
    By \thref{ind-star-props}(v), it suffices to show that $\dind \implies \ind^+$. But this follows from \cite[Lemma 1.4]{shelah1980simple} (the basic characterisation of dividing) and the fact that every $\ind$-Morley sequence over $M$ (if it exists) is $M$-indiscernible by definition.
\end{proof}
Let us note that, despite the close relation between forking and the relative notions of Kim-forking introduced above, in general, we cannot fit forking within this framework. There is, however, a large class of theories in which this is possible:
\begin{lemma} \thlabel{ntp2-forking-algebraic-kim-forking}
    If $T$ is $\NTP_2$, then, for all $M \models T$, $\phi(x,b)$ divides over $M$ iff it $\aind$-Kim-divides over $M$. In particular, $\find$ and $\aind$-Kim-independence coincide over models. 
\end{lemma}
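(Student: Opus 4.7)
The plan is to prove the equivalence by checking both directions separately, and then deduce the final claim from Chernikov-Kaplan's equality of forking and dividing in $\NTP_2$ theories over models. For the direction $\aind$-Kim-dividing implies dividing, the argument from \thref{find-stronger-than-relative-kim} adapts verbatim: the $M$-indiscernible Morley sequence witnessing $\aind$-Kim-dividing is, in particular, an $M$-indiscernible sequence in $\tp(b/M)$ along which $\{\phi(x, b_i) : i < \omega\}$ is inconsistent, so $\phi(x,b)$ divides over $M$. This direction does not use $\NTP_2$.

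For the converse in $\NTP_2$, I would invoke the Chernikov-Kaplan characterization from \cite{chernikov2012forking}: in $\NTP_2$ theories, whenever $\phi(x,b)$ divides over a model $M$, the set $\{\phi(x,b_i) : i < \omega\}$ is inconsistent for any strict Morley sequence $(b_i)_{i < \omega}$ over $M$ in $\tp(b/M)$, and such sequences exist over models in $\NTP_2$. A strict Morley sequence is, by definition, an $M$-indiscernible Morley sequence in some $\istind$-free global extension $q$ of $\tp(b/M)$. By \thref{ind-star-props}(i) together with the standard implications $\iind \implies \find \implies \aind$ recorded after \thref{ind-relations}, we have $\istind \implies \iind \implies \aind$, so such a $q$ is in particular $M$-$\aind$-free. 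Hence $(b_i)_{i < \omega}$ witnesses $\aind$-Kim-dividing of $\phi(x,b)$ over $M$.

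The in-particular clause then follows by another appeal to \cite{chernikov2012forking}: forking equals dividing over models in $\NTP_2$, and combining this with the equivalence just established gives that $\aind$-Kim-forking (a disjunction of $\aind$-Kim-dividing, hence of dividing, hence of forking formulas) coincides with forking over $M$. Consequently a type contains an $\aind$-Kim-forking formula iff it contains a forking formula, yielding $\find = \aind$-Kim-independence over models. The main obstacle is locating the exact Chernikov-Kaplan statements aligning with the present notation, in particular verifying that ``strict Morley sequences'' as used in \cite{chernikov2012forking} are precisely Morley sequences in $\istind$-free types in our terminology; the chain $\istind \implies \aind$ and the remaining deductions are routine from the definitions.
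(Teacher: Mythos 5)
Your proposal is correct and follows essentially the same route as the paper: the nontrivial direction is obtained exactly as in the paper's proof, by invoking the Chernikov--Kaplan fact that in $\NTP_2$ dividing over a model is witnessed by strictly invariant Morley sequences, which exist and are Morley in an $\istind$-free (hence $M$-$\aind$-free) global type. The paper leaves the converse direction and the ``in particular'' clause implicit, which you spell out correctly (though note the appeal to forking $=$ dividing is not actually needed for the last step, since the formula-by-formula equivalence of dividing and $\aind$-Kim-dividing already identifies the two forking notions).
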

\begin{proof}
    Suppose $\phi(x,b)$ divides over $M \models T$. Since $T$ is $\NTP_2$, $\phi(x,b)$ divides along every strictly $M$-invariant Morley sequence over $M$. Since such sequences exist and are in particular $\aind$-Morley, it follows that $\phi(x,b)$ $\aind$-Kim-divides over $M$. 
\end{proof}
\begin{example}\thlabel{triangle-free-graph}
    Let $T$ be the theory of the generic triangle-free graph. In \cite{conant2017forking}, Conant characterises dividing in this theory. His proof (more specifically, the choice of indiscernible sequence in Construction 4.3) shows that $\phi(x,b)$ divides over $C$ iff it $\aind$-Kim-divides over $C$, for any set of parameters $C$. In particular, the converse to the above lemma does not hold.

    This example also illustrates the distinction between $\ind$-Kim-dividing and weak $\ind$-Kim-dividing in general. By \cite[Theorem 5.3]{conant2017forking}, forking and dividing agree for complete types, and thus, by the previous paragraph and \thref{weak-kim-div-and-kim-fork}, weak $\aind$-Kim-dividing and $\aind$-Kim-forking agree for formulas. \cite[Theorem 6.2]{conant2017forking} shows that $\aind$-Kim-dividing and weak $\aind$-Kim-dividing do not agree.
\end{example}
\begin{example}
    Let $T = T_L^\varnothing$ be the model completion of the empty $L$-theory, where $L = \{f\}$ consists of a binary function symbol. This theory is known to be strictly $\NSOP_1$ (see \cite{kruckman2018generic}). Let $\phi(x, a,b) := f(x,a) = b$, and let $M \models T$ be a model such that $a,b \notin M$. 
    
    Let $(a_ib_i)_{i<\omega}$ be an $M$-indiscernible sequence with $a_0 = a$ and $b_0 = b$ such that $b_i \neq b_j$ for all $i \neq j$ and $a_i = a$ for all $i < \omega$. (Note we can indeed find such a sequence.) In this case, $\{\phi(x,a_i,b_i) : i < \omega\} = \{f(x,a) = b_i : i < \omega\}$ is $2$-inconsistent. Now let $(a_i^*b_i^*)_{i<\omega}$ be an $M$-indiscernible $\aind$-Morley sequence over $M$ with $a^*_0 = a$ and $b_0^* = b$. Since $a \notin M$, we have that $a_i^* \neq a_j^*$ for all $i \neq j$. In particular, $\{\phi(x, a_i^*, b_i^*) : i<\omega\} = \{f(x,a_i^*) = b_i^* : i < \omega\}$ is consistent. This shows that $\phi(x,a,b)$ divides but does not $\aind$-Kim-divide over $M$. 
\end{example}
\subsection{Relative Conant-independence}
We now introduce a new independence relation, which can be seen as an ``existential version'' of $\ind^+$ from the previous subsection:
\begin{definition}
    Let $\ind$ be an independence relation. We define $\ind^\times$ as follows: $A \ind_M^\times B$ iff there is an $\ind$-Morley sequence $I$ over $M$ starting at $B$ which is $MA$-indiscernible.
\end{definition}
Observe that concrete instances of this relation have previously appeared in the literature, e.g., \cite{mutchnik2023nsop3}.
\begin{lemma} \thlabel{properties-of-ind-times}
    Let $\ind$ be an independence relation. Then $\ind^\times$ is an independence relation satisfying left normality. If, in addition, $\ind$ satisfies full existence and monotonicity, then $\ind^\times$ also satisfies existence.
\end{lemma}
\begin{proof}
    We first check that $\ind^\times$ is an independence relation. Suppose that $A \ind^\times_M B$ and $\sigma \in \Aut(\M)$. Let $(B_i)_{i < \omega}$ be an $\ind$-Morley sequence over $M$ with $B_0 = B$ which is $MA$-indiscernible. Then $(\sigma(B_i))_{i < \omega}$ is $\ind$-Morley over $\sigma(M)$, $\sigma(B_0) = \sigma(B)$, and the sequence is $\sigma(M)\sigma(A)$-indiscernible. Therefore, $\sigma(A) \ind_{\sigma(M)}^\times \sigma(B)$.

    (Existence) By full existence and monotonicity, there is an $\ind$-Morley sequence $(M_i)_{i<\omega}$ over $M$ with $M_0 = M$. But then, by indiscernibility, $M_i = M$ for all $i < \omega$. So clearly, for any $A$, we have $A \ind^\times_M M$. 

    (Left normality) This is again immediate.
\end{proof}
\begin{lemma}\thlabel{forall-dividing-for-partial-types}
    If $\pi(x,b)$ is $\ind^\times$-free over $M$, then there is an $\ind$-Morley sequence $(b_i)_{i < \omega}$ over $M$ such that $\bigcup_{i < \omega} \pi(x,b_i)$ is consistent. 
\end{lemma}
\begin{proof}
    Suppose $\pi(x,b)$ is $\ind^\times$-free over $M$. By definition, there is some $c \models \pi(x,b)$ such that $c \ind_M^\times b$. Thus, there exists an $\ind$-Morley sequence $(b_i)_{i < \omega}$ over $M$ with $b_0 = b$ that is $Mc$-indiscernible. In particular, $c \models \bigcup_{i < \omega} \pi(x,b_i)$.
\end{proof}
\begin{lemma} \thlabel{forall-dividing-for-complete-types}
    A type $p(x,b) \in S(Mb)$ is $\ind^\times$-free over $M$ iff there exists an $\ind$-Morley sequence $(b_i)_{i < \omega}$ over $M$ with $b_0 = b$ such that $\bigcup_{i < \omega} p(x,b_i)$ is consistent.
\end{lemma}
\begin{proof}
    ($\Rightarrow$) This is the previous lemma.

    ($\Leftarrow$) Suppose there is an $\ind$-Morley sequence $I = (b_i)_{i < \omega}$ over $M$ with $b_0 = b$ such that $\bigcup_{i < \omega}p(x,b_i)$ is consistent. Let $a' \models \bigcup_{i < \omega} p(x,b_i)$. By Ramsey, compactness, an automorphism and the completeness of $p$, we may assume $I$ is $Ma'$-indiscernible. We can then use automorphisms to find such a sequence for any $a \models p$, which implies that $p$ is $\ind^\times$-free over $M$.
\end{proof}
But note that the proof for strong finite character does not extend to this case. By applying compactness, we obtain, for each $\ind$-Morley sequence, a formula that divides along it. But there is no guarantee here that we can find a single formula that works for all of those sequences.

However, in this case, we can characterise $(\ind^\times)^\sfc$ in a very concrete fashion:
\begin{theorem} \thlabel{sfc-of-forall}
    Suppose $\ind$ satisfies monotonicity. If there is an $\ind$-Morley sequence $(b_i)_{i < \omega}$ over $M$ such that the set $\{\phi(x,b_i) : i < \omega\}$ is consistent, then $\phi(x,b)$ is $\ind^\times$-free over $M$. 
\end{theorem}
\begin{proof}
    Suppose that $\phi(x,b)$ is not $\ind^\times$-free over $M$. This means that, for all $a \models \phi(x,b)$, the type $\tp(a/Mb)$ is not $\ind^\times$-free over $M$. 
    
    Fix an $\ind$-Morley sequence $(b_i)_{i < \omega}$ over $M$ with $b_0 = b$. Let $a \models \phi(x,b)$ and denote $p_a(x,y) := \tp(ab/M)$. Then, by \thref{forall-dividing-for-complete-types}, the set $\bigcup_{i < \omega} p_a(x,b_i)$ is inconsistent. So, by compactness, there is a formula $\psi_a(x,b_a) \in \tp(a/Mb)$ with $b_a \subseteq b$ such that $\{\psi_a(x,b_{a,i}) : i < \omega\}$ is inconsistent, where as usual $b_{a,i}$ denotes the restriction of $b_i$ to the appropriate variables. Now, the set $\{\phi(x,b)\} \cup \{\neg \psi_a(x,b_a) :a 
    \models \phi(x,b)\}$ is inconsistent by our choice of $\psi_a$'s. Thus, by compactness, there exist realisations $a_0, \dots, a_{n-1} \models \phi(x,b)$ such that
    \begin{equation*}
        \phi(x,b) \vdash \bigvee_{j < n} \psi_{a_j}(x,b_{a_j}).
    \end{equation*}
    Assume now, for contradiction, that $\{\phi(x,b_i) : i < \omega\}$ is consistent. By the above and pigeonhole, there is an infinite subset $I \subseteq \omega$ and some $j < n$ such that $\{\psi_{a_j}(x,b_{a_j,i}) : i \in I\}$ is consistent. So, by $M$-indiscernibility, $\{\psi_{a_j}(x,b_{a_j,i}) : i < \omega\}$ is consistent, which contradicts our choice of $\psi_{a_j}$. 

    Since the sequence was arbitrary, we have shown that $\{\phi(x,b_i) : i < \omega\}$ is inconsistent for all $\ind$-Morley sequences $(b_i)_{i < \omega}$ over $M$ with $b_0 = b$.
\end{proof}
\begin{definition}
Let $\ind$ be an independence relation.
\begin{enumerate}[(i)]
    \item We define \textbf{$\ind$-Conant-independence} to be $((\ind^\times)^\sfc)^*$.
    \item We say $\phi(x,b)$ \textbf{$\ind$-Conant-divides over $M$} if it is not $\ind^\times$-free over $M$.
    \item We say $\phi(x,b)$ \textbf{$\ind$-Conant-forks over $M$} if it is not $(\ind^\times)^*$-free over $M$.
\end{enumerate}
\end{definition}
\begin{lemma}\thlabel{conant-ind-defined-right}
    Let $\ind$ be an independence relation satisfying monotonicity.
    \begin{enumerate}[(i)]
        \item A formula $\phi(x,b)$ $\ind$-Conant-divides over $M$ iff, for every $\ind$-Morley sequence $(b_i)_{i < \omega}$ over $M$ with $b_0 = b$, the set $\{\phi(x,b_i) : i < \omega\}$ is inconsistent.
        \item A formula $\phi(x,b)$ $\ind$-Conant-forks over $M$ iff there exist formulae $\psi_i(x,d_i)$ for $i < n$ such that $\phi(x,b) \vdash \bigvee_{i < n} \psi_i(x,d_i)$ and each $\psi_i(x,d_i)$ $\ind$-Conant-divides over $M$. 
        \item We have $A$ is $\ind$-Conant-independent from $B$ over $M$ iff, for some (equiv., any) enumeration $a$ of $A$, $\tp(a/MB)$ does not contain any formulas $\ind$-Conant-forking over $M$. 
    \end{enumerate}
\end{lemma}
\begin{proof}
    (i) This restates \thref{forall-dividing-for-partial-types,sfc-of-forall}.

    (ii) We have that $\phi(x,b)$ $\ind$-Conant-forks over $M$ iff $\phi(x,b)$ is not $((\ind^\times)^\sfc)^*$-free over $M$. By \thref{sfc-star-free-formula}, this holds iff there are formulae $\psi_i(x,d_i)$ for $i<n$ such that $\phi(x,b) \vdash \bigvee_{i < n} \psi_i(x,d_i)$ and no $\psi_i(x,d_i)$ is $\ind^\times$-free over $M$. But by definition $\psi_i(x,d_i)$  is not $\ind^\times$-free over $M$ iff it $\ind$-Conant-divides over $M$. This proves the result.

    (iii) This is \thref{sfc-star-criterion} and (i).
\end{proof}
\begin{example} \thlabel{conant-independence}
    \begin{enumerate}[(i)]
        \item We refer to $\iind$-Conant-independence simply as \textbf{Conant-independence}. As before, we will later see that this notion coincides with that introduced by Mutchnik with the same name in \cite{mutchnik2024conant}.
        \item Following \cite{kim2022some}, we sometimes refer to $\uind$-Conant-independence as \textbf{coheir-Conant-independence}.
    \end{enumerate}
\end{example}
\begin{remark}
    As before, we extend the terminology to complete types as follows: we say $p(x) \in S(Mb)$ \textbf{$\ind$-Conant-divides} (resp., \textbf{$\ind$-Conant-forks}) \textbf{over $M$} if it is not $(\ind^\times)^\sfc$-free (resp., $((\ind^\times)^\sfc)^*$-free) over $M$. It is immediate from the definition that $p(x)$ $\ind$-Conant-divides over $M$ iff there is a formula $\phi(x,b') \in p(x)$ such that $\phi(x,b')$ $\ind$-Conant-divides over $M$. 
\end{remark}
We have observed in the previous subsection that $\ind$-Kim-forking and $\ind$-Kim-dividing might coincide for complete types without doing so for formulas (see \thref{triangle-free-graph}). An analogous argument to \thref{weak-kim-div-and-kim-fork} shows that this cannot happen for $\ind$-Conant-dividing:
\begin{proposition}
    Let $\ind$ be an independence relation satisfying full existence and monotonicity. Then $\ind$-Conant-forking and $\ind$-Conant-dividing agree for complete types iff they agree for formulas. 
\end{proposition}
\begin{proof}
    Left-to-right is clear by definition. For the converse, suppose that $\phi(x,b)$ $\ind$-Conant-forks over $M$. By definition, it is not $(\ind^\times)^*$-free over $M$, so by \thref{commuting-star-and-sfc}, for every $a \models \phi(x,b)$, $\tp(a/Mb)$ $\ind$-Conant-forks over $M$. By assumption, $\tp(a/Mb)$ $\ind$-Conant-divides over $M$. Hence, $a \: (\nind^\times)^\sfc_M \: b$ for every $a \models \phi(x,b)$. Therefore, $\phi(x,b)$ is not $\ind^\times$-free over $M$, i.e., $\phi(x,b)$ $\ind$-Conant-divides over $M$. 
\end{proof}
For the rest of this subsection, we will focus on proving some preliminary properties of $\ind$-Conant-independence. 
\begin{lemma}\thlabel{mon-and-strong-fin-for-universal-kim}
    For any independence relation $\ind$, $\ind$-Conant-independence satisfies monotonicity, strong finite character, right extension, and normality. Moreover, if $\ind$ satisfies full existence and monotonicity, then $\ind$-Conant-independence satisfies existence, and the following holds:
    \begin{enumerate}
        \item[ ] \textbf{Weak left extension}: If $a$ is $\ind$-Conant-independent from $b$ over $M$ and $c$ is a tuple, then there is some $c'a' \equiv_M ca$ such that $\tp(c'a'/Mb)$ does not $\ind$-Conant-divide over $M$.  
    \end{enumerate}
\end{lemma}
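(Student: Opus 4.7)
The first five properties listed—monotonicity, strong finite character, existence, right extension, and normality—follow by adapting the corresponding arguments in \thref{mon-and-strong-fin-char-are-preserved} essentially verbatim; those manipulations depend only on the propositional structure of Conant/Kim-forking and not on whether the quantifier over Morley sequences in the definition of dividing is existential or universal. The one small adjustment is for left normality, where the analogue of \thref{ind-kim-dividing-characterisation} is not directly available and a direct substitution argument is used instead: if $\psi(x_1, x_2, c)$ $\ind$-Conant-divides in the combined variable $(x_1, x_2)$, then for any $m \in M$ the formula $\psi(m, x_2, c)$ continues to $\ind$-Conant-divide in $x_2$, since a witness $a$ to consistency of the specialised set along a Morley sequence would immediately yield a witness $(m, a)$ for the original.

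For weak left extension, the plan is to prove that the partial type
\[
\Sigma(x, y) := \tp(ca/M) \cup \{\neg \phi(x, y, b) : \phi(x, y, b) \text{ $\ind$-Conant-divides over } M\}
\]
is consistent; any realisation $(c', a')$ then satisfies $c'a' \equiv_M ca$ together with $\tp(c'a'/Mb)$ containing no $\ind$-Conant-dividing formula, which is exactly the desired conclusion. By compactness, it suffices to rule out a finite configuration $\chi(x, y) \in \tp(ca/M)$ together with $\ind$-Conant-dividing formulas $\psi_1, \dots, \psi_n(x, y, b)$ with $\chi \vdash \bigvee_{i \leq n} \psi_i(x, y, b)$. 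To do so I would pick an $M$-indiscernible $\ind$-Morley sequence $(b_j)_{j<\omega}$ in some $M$-$\ind$-free global extension of $\tp(b/M)$ with $b_0 = b$.

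By $\Aut(\M/M)$-invariance of $\chi$, one has $\chi \vdash \bigvee_i \psi_i(x, y, b_j)$ for every $j$, and so $(c, a) \models \chi$ yields, for each $j$, some index $i_j \leq n$ with $\psi_{i_j}(c, a, b_j)$. Pigeonhole then produces an $i^*$ occurring on an infinite set $J \subseteq \omega$. On the other hand, $\ind$-Conant-dividing of $\psi_{i^*}$ together with compactness and $M$-indiscernibility of $(b_j)$ forces the set $\{\psi_{i^*}(x, y, b_j) : j < \omega\}$ to be $k$-inconsistent for some fixed $k$, so every infinite subset—in particular $\{\psi_{i^*}(x, y, b_j) : j \in J\}$—is inconsistent, contradicting that $(c, a)$ realises it.

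The main obstacle is guaranteeing the existence of the $M$-indiscernible $\ind$-Morley sequence used above, and this is precisely where the monotonicity hypothesis on $\ind$ enters: right monotonicity ensures that subsequences of $\ind$-Morley sequences remain $\ind$-Morley, so the standard extraction of $M$-indiscernibles from an iterated realisation of any $M$-$\ind$-free global extension of $\tp(b/M)$ yields a sequence of the required kind.
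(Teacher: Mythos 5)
Your handling of the first five properties is the same as the paper's (adapt \thref{mon-and-strong-fin-char-are-preserved}), and the substitution argument for left normality is fine. For weak left extension you take a genuinely different route: a single compactness-plus-pigeonhole argument along one Morley sequence, rather than the paper's adaptation of Mutchnik (pass to a saturated $M' \succ M$ via right extension, prove that over $M'$ non-Conant-dividing implies Conant-independence, and then realise the relevant formulas along a sequence extracted from the non-Conant-dividing hypothesis, using monotonicity of $\ind$ to restrict to subtuples). The pigeonhole core of your argument is sound: if some $\chi \in \tp(ca/M)$ implied a finite disjunction of $\ind$-Conant-dividing formulas $\psi_i(x,y,b)$, then along an $M$-indiscernible Morley sequence $(b_j)_{j<\omega}$ in a global $M$-$\ind$-free extension of $\tp(b/M)$ with $b_0=b$, each $\{\psi_i(x,y,b_j) : j<\omega\}$ is $k$-inconsistent, while $(c,a)$ realises infinitely many instances of some fixed $\psi_{i^*}$ --- a contradiction.

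The genuine gap is in how you secure the sequence $(b_j)$. The lemma does not assume full existence for $\ind$, so ``some $M$-$\ind$-free global extension of $\tp(b/M)$'' need not exist; your final paragraph simply presupposes one. Moreover, even granted such an extension $q$, your extraction argument produces an $M$-indiscernible $\ind$-Morley sequence, but $\ind$-Conant-dividing is defined via $M$-indiscernible Morley sequences \emph{in} a free extension; converting an $\ind$-Morley sequence into a Morley sequence in a free type is \thref{collapse-of-sequences-and-types} and requires quasi-strong finite character, which is not among the hypotheses. So for the sequence your last paragraph constructs, the inconsistency of $\{\psi_{i^*}(x,y,b_j) : j<\omega\}$ is not justified, and the appeal to monotonicity is off-target. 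The tell is that your proposal never uses the hypothesis that $a$ is $\ind$-Conant-independent from $b$ over $M$ --- and that hypothesis is exactly what repairs the gap: since, e.g., $x=x$ lies in $\tp(a/Mb)$ and does not $\ind$-Conant-fork, hence does not $\ind$-Conant-divide, the definition of non-Conant-dividing directly supplies a global $M$-$\ind$-free extension of $\tp(b/M)$ together with an $M$-indiscernible Morley sequence in it starting at $b$; with that sequence your argument goes through verbatim.

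Even after this repair, note that your argument only yields the literal conclusion $c'a' \equiv_M ca$, whereas the paper's proof establishes the stronger fact that one may take $a' \equiv_{Mb} a$ (indeed $a' \equiv_{M'} a$ for a saturated $M' \supseteq Mb$, with non-Conant-dividing over the whole of $M'$); it is precisely for this stronger form that the saturated model, the claim about types over $M'$, and the monotonicity of $\ind$ are needed, and it is the stronger form that functions as a surrogate for left extension in later applications such as \thref{independence-theorem}.
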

\begin{proof}
    The first four properties follow directly from combining \thref{properties-of-ind-times,sfc-properties,ind-star-props}. For the ``moreover'' part, we note that existence will follow from \thref{ind-kim-implies-universal-ind-kim} and prove weak left extension here.

    (Weak left extension) We adapt the proof from \cite[Lemma 2.1(2)]{mutchnik2023properties}. Suppose that $a$ is $\ind$-Conant-independent from $b$ over $M$. Choose $M' \succ M$ to be a sufficiently saturated extension. By right extension, we can find some $a' \equiv_{Mb} a$ such that $a'$ is $\ind$-Conant-independent from $M'$ over $M$. So we may replace $a$ by $a'$ and assume that $b = M'$.

    \begin{claim}
        For any $c$ such that $\tp(c/M')$ does not $\ind$-Conant-divide over $M$, we have that $c$ is $\ind$-Conant-independent from $M'$ over $M$.
    \end{claim}
    \begin{proof}
        Suppose otherwise. Let $\phi(x,d) \in \tp(c/M')$ be a formula which $\ind$-Conant-forks over $M$. Thus, there exist $\psi_i(x,e_i)$ $\ind$-Conant-dividing over $M$ such that $\phi(x,d) \vdash \bigvee_{i<n} \psi_i(x,e_i)$. By saturation of $M'$, we can find some $e'_0, \dots, e'_{n-1}$ in $M'$ such that $e'_0 \dots e'_{n-1} \equiv_{Md} e_0 \dots e_{n-1}$. Hence, $\psi_i(x, e'_i)$ $\ind$-Conant-divides over $M$, and $\phi(x,d) \vdash \bigvee_{i < n} \psi_i(x, e'_i)$ by invariance. But then, as $\tp(c/M')$ is a complete type over $M'$, it follows that there is some $i < n$ with $\psi_i(x, e'_i) \in \tp(c/M')$, contradicting our initial assumption. \hfill \pushQED{$\qed_{\text{Claim}}$}
    \end{proof}
    Take some $c$. We want to show that there is $c'a' \equiv_{M} ca$ with $a' \equiv_{M'} a$ and $\tp(c'a'/M')$ not $\ind$-Conant-dividing over $M$. By compactness, for $\psi(y,x) \in \tp(ca/M)$ and $\phi(x,d) \in \tp(a/M')$ with $d \subseteq M'$, it is enough to find some $c'a'$ such that $\models \psi(c', a') \wedge \phi(a',d)$, so that $\tp(c'a'/Md)$ does not $\ind$-Conant-divide over $M$.

    Now, the formula $\exists y (\psi(y,x) \wedge \phi(x,d)) \in \tp(a/M')$, and so in particular it does not $\ind$-Conant-divide over $M$. Hence, there is some $\ind$-Morley sequence $I = (d_i)_{i < \omega}$ over $M$ starting at $d$ such that $\{\exists y(\psi(y,x) \wedge \phi(x, d_i)) : i < \omega\}$ is consistent. Let $a'$ be a realisation. By Ramsey, compactness, and an automorphism, we may assume $a'$ is such that $I$ is $Ma'$-indiscernible. In particular, $\models \exists y\:\psi(y, a')$, so let $c'$ be a realisation. Another application of Ramsey and compactness allows us to assume $I$ is $Ma'c'$-indiscernible. 

    So it suffices to show that $\tp(c'a'/Md)$ does not $\ind$-Conant-divide over $M$. Let $e \subseteq d$ be finite. For $i < \omega$, we can find $e_i \subseteq d_i$ such that $(e_i)_{i < \omega}$ starts with $e$ and is $Ma'c'$-indiscernible. Now, as $I$ is $\ind$-Morley, we have
    \begin{equation*}
        d_i \ind_M d_0 \dots d_{i-1}.
    \end{equation*}
    By monotonicity of $\ind$, it follows that
    \begin{equation*}
        e_i \ind_M e_0 \dots e_{i-1}.
    \end{equation*}
    Hence, $(e_i)_{i < \omega}$ is $\ind$-Morley. 
    
    Now take $\theta(x,y, e) \in \tp(c'a'/Md)$. By $Ma'c'$-indiscernibility, $\{\theta(x,y,e_i) : i < \omega\}$ is consistent. Therefore, $\theta(x,y,e)$ does not $\ind$-Conant-divide over $M$.
\end{proof}
\begin{lemma}\thlabel{univ-kim-forking-iff-univ-kim-dividing}
    Suppose $\ind$ satisfies full existence, monotonicity, and left and right extension. Then $\phi(x,b)$ $\ind$-Conant-forks over $M$ iff it $\ind$-Conant-divides over $M$.
\end{lemma}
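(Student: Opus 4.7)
The $(\Leftarrow)$ direction is immediate, taking $n = 1$ and $\psi_0 = \phi$. For $(\Rightarrow)$, my plan is to adapt the standard template for forking-equals-dividing results (cf.\ the Chernikov--Kaplan argument in $\NTP_2$): assume $\phi(x, b) \vdash \bigvee_{i < n} \psi_i(x, c_i)$ with each $\psi_i(x, c_i)$ $\ind$-Conant-dividing over $M$, set $c := c_0 \cdots c_{n-1}$, fix an arbitrary global $M$-$\ind$-free extension $q_b$ of $\tp(b/M)$ and an arbitrary $M$-indiscernible Morley sequence $(b_j)_{j < \omega}$ in $q_b$ with $b_0 = b$, and aim to show $\{\phi(x, b_j) : j < \omega\}$ is inconsistent. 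Suppose for contradiction it is realised by some $a$.

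The main technical step will be to construct a global $M$-$\ind$-free extension $q$ of $\tp(bc/M)$ whose $b$-projection coincides with $q_b$; this is where left (and right) extension of $\ind$ enter. For any parameter set $C \supseteq M$ and any realisation $b^* \models q_b|_{MC}$ (so $b^* \ind_M C$), I first take an $M$-automorphism sending $b \mapsto b^*$ and let $c^*_0$ be the image of $c$, so $(b^*, c^*_0) \equiv_M (b, c)$. Applying left extension to $b^* \ind_M C$ with $b^* \subseteq (b^*, c^*_0)$ produces some $C' \equiv_{Mb^*} C$ with $(b^*, c^*_0) \ind_M C'$; an $Mb^*$-automorphism sending $C'$ back to $C$ then yields $c^*$ with $(b^*, c^*) \equiv_M (b, c)$ and $(b^*, c^*) \ind_M C$. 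Assembling these local choices coherently via a Zorn/compactness argument yields the desired $q$, and by left monotonicity each projection $q|_{c_i}$ is an $M$-$\ind$-free extension of $\tp(c_i/M)$. I then take a Morley sequence in $q$ starting with $(b, c)$ (iteratively realising, using full existence) and extract an $M$-indiscernible $(b^*_j, c^*_j)_{j < \omega}$ via Ramsey, adjusted by an $M$-automorphism so that $(b^*_0, c^*_0) = (b, c)$.

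The next step is to align $(b^*_j)$ with the given $(b_j)$: both are $M$-indiscernible Morley sequences in $q_b$ starting with $b$, so an inductive argument on the type of initial segments gives $(b_j) \equiv_M (b^*_j)$, and an $M$-automorphism transfers the construction to obtain $(c_j)$ with $b_j c_j \equiv_M bc$ for all $j$ and each projection $(c_{i, j})_{j < \omega}$ $M$-indiscernible Morley in an $M$-$\ind$-free extension of $\tp(c_i/M)$. From here the contradiction is standard: $\phi(x, b_j) \vdash \bigvee_{i < n} \psi_i(x, c_{i, j})$ forces by pigeonhole on $a$ some $i_0 < n$ with $\models \psi_{i_0}(a, c_{i_0, j})$ for $j$ in an infinite subset; $M$-indiscernibility of $(c_{i_0, j})_{j < \omega}$ propagates the consistency of $\{\psi_{i_0}(x, c_{i_0, j})\}$ to all $j < \omega$; and this contradicts the $\ind$-Conant-dividing of $\psi_{i_0}(x, c_{i_0, 0})$, since $c_{i_0, 0} \equiv_M c_{i_0}$. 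The hard part will be the alignment step, which is delicate because $q_b$ need not be $M$-invariant; the type-uniqueness of $M$-indiscernible Morley sequences in $q_b$ starting with $b$ must be established from the combination of $\ind$-freeness and $M$-indiscernibility, and this is where full use of the hypotheses (including right extension) on $\ind$ will likely be needed.
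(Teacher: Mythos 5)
Your overall strategy --- pass to the parameter tuple $bc$, attach a $c$-part to a witnessing Morley sequence for $\phi$, and finish by pigeonhole --- is the same as the paper's, but the specific route you take to attach the $c_j$'s has a genuine gap at the alignment step, and you have correctly identified where it is without resolving it. Two $M$-indiscernible Morley sequences in the same global type $q_b$ starting at the same $b$ need \emph{not} have the same type over $M$: your induction already breaks at the third element, since $b_2 \models q_b|_{Mb_0b_1}$ while $b_2^* \models q_b|_{Mb_0b_1^*}$ are restrictions of $q_b$ to \emph{different} parameter sets, and without $M$-invariance of $q_b$ the automorphism over $Mb_0$ carrying $b_1$ to $b_1^*$ need not carry one restriction to the other. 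Neither $\ind$-freeness nor $M$-indiscernibility repairs this (e.g.\ in a generic $3$-hypergraph, a single non-invariant free type can admit two $M$-indiscernible Morley sequences agreeing on pairs but disagreeing on triples). For the same reason you cannot instead build the $c_j$'s directly on top of the given $(b_j)$ inside $q$: a Morley sequence in $q$ constrains $b_{j+1}$ over $Mb_{\leq j}c_{\leq j}$, which is strictly more than the given constraint over $Mb_{\leq j}$. A secondary gap is step 2 itself: the local choices of $c^*$ for each $C$ do not cohere into a single global type, and an arbitrary global completion of ``$yz \equiv_M bc$ and $y \models q_b$'' need not be $M$-$\ind$-free --- making that work is exactly what quasi-strong finite character buys in \thref{type-definable-free-extensions}, and it is not a hypothesis here.

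The paper avoids both problems by never constructing the global type $q$ and never aligning two sequences. It first proves a claim that for $\models \phi(x,a) \leftrightarrow \psi(x,a,b)$, the formula $\phi(x,a)$ $\ind$-Conant-divides iff $\psi(x,a,b)$ does; the easy direction is your monotonicity/projection argument, and the nontrivial direction extends a \emph{given} witnessing sequence $(a_i)$ in place, producing $b_i$ with $a_ib_i \equiv_M ab$ and $a_ib_i \ind_M a_{<i}b_{<i}$ by alternating right and left extension (this is precisely where those two hypotheses enter). The claim reduces everything to formulas with the common parameter $bc$, after which a single witnessing sequence in the variables of $bc$ plus pigeonhole (using that an infinite subsequence is again an $M$-indiscernible $\ind$-Morley sequence starting at a conjugate of $bc$, and invariance of Conant-dividing) finishes the proof. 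If you replace your steps 2--4 by this ``extend the given sequence coordinate-wise'' construction --- so that the $b$-part keeps its $M$-type and the consistency of $\{\phi(x,b_j)\}$ is preserved for free --- the rest of your argument goes through.
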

\begin{proof}
    Again, we adapt the standard proof from the literature (cf., \cite[Proposition 5.2]{mutchnik2025nsop}). Let us first begin with a claim:
    \begin{claim}
        If $\models \phi(x, a) \leftrightarrow \psi(x, a, b)$, then $\phi(x,a)$ $\ind$-Conant-divides over $M$ iff so does $\psi(x, a, b)$. 
    \end{claim}
    \begin{proof}[Proof of Claim]
        ($\Rightarrow$) Suppose $\psi(x,a,b)$ does not $\ind$-Conant-divide over $M$; let $(a_ib_i)_{i < \omega}$ be an $\ind$-Morley sequence over $M$ witnessing this. By monotonicity, $(a_i)_{i < \omega}$ is still $\ind$-Morley over $M$. By assumption, $\{\phi(x, a_i) : i < \omega\}$ is consistent. So $\phi(x,a)$ does not $\ind$-Conant-divide over $M$. 

        ($\Leftarrow$) Suppose $(a_i)_{i < \omega}$ is an $\ind$-Morley sequence over $M$ with $a_0 = a$ such that $\{\phi(x, a_i) : i < \omega\}$ is consistent. By right and left extension for $\ind$, there is an $\ind$-Morley sequence $(a_ib_i)_{i < \omega}$ over $M$ such that $a_0b_0 \equiv_M ab$. Therefore, $\{\psi(x, a_i, b_i) : i < \omega\}$ is consistent, as required. \hfill \pushQED{$\qed_{\text{Claim}}$}
    \end{proof}
    Now suppose that $\phi(x,b)$ $\ind$-Conant-forks but does not $\ind$-Conant-divide over $M$. By the claim, this means that $\phi(x,b) \vdash \bigvee_{i<n} \psi_i(x,b)$ and each $\psi_i(x, b)$ $\ind$-Conant-divides over $M$. Let $(b_i)_{i \in \omega}$ be an $\ind$-Morley sequence over $M$ with $b_0 = b$ such that $\{\phi(x, b_i) : i < \omega\}$ is consistent. Then, by the pigeonhole principle, there is an infinite $I \subseteq \omega$ and some $k < n$ such that $\{\psi_k(x, b_i) : i < \omega\}$ is consistent, a contradiction. 
\end{proof}
\begin{lemma} \thlabel{universal-kim-preserves-implications}
    Let $\ind^1$ and $\ind^2$ be two independence relations satisfying full existence and monotonicity, and let $M \models T$. If $\ind^1 \implies \ind^2$ and $\phi(x,b)$ $\ind^2$-Conant-divides over $M$, then it $\ind^1$-Conant-divides over $M$. In particular, $\ind^1$-Conant-independence implies $\ind^2$-Conant-independence. 
\end{lemma}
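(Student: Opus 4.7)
The plan is to observe that the implication $\ind^1 \implies \ind^2$ means every $M$-$\ind^1$-free global type is automatically $M$-$\ind^2$-free. Indeed, if $p$ is a global type such that, for all $a \models p|_{MB}$, we have $a \ind^1_M B$, then by the assumed implication we also have $a \ind^2_M B$, so $p$ is $M$-$\ind^2$-free. Thus the collection of $M$-$\ind^1$-free extensions of any given $\tp(b/M)$ is a subcollection of the $M$-$\ind^2$-free extensions.

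Next, I would unpack the ``for all'' quantifier in the definition of Conant-dividing. Suppose $\phi(x,b)$ $\ind^2$-Conant-divides over $M$. Then for every $M$-$\ind^2$-free extension $q \supset \tp(b/M)$ and every $M$-indiscernible Morley sequence $(b_i)_{i \in \omega}$ in $q$ starting at $b$, the set $\{\phi(x,b_i) : i \in \omega\}$ is inconsistent. By the observation above, this condition in particular holds for every $M$-$\ind^1$-free extension of $\tp(b/M)$, so $\phi(x,b)$ $\ind^1$-Conant-divides over $M$.

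For the ``in particular'' clause, I would argue by the contrapositive. If $a$ is not $\ind^2$-Conant-independent from $b$ over $M$, then some $\phi(x,b) \in \tp(a/Mb)$ $\ind^2$-Conant-forks over $M$, meaning $\phi(x,b) \vdash \bigvee_{i < n} \psi_i(x, c_i)$ with each $\psi_i(x, c_i)$ $\ind^2$-Conant-dividing over $M$. By the first part of the lemma, each $\psi_i(x, c_i)$ then $\ind^1$-Conant-divides over $M$, so $\phi(x,b)$ $\ind^1$-Conant-forks over $M$, and hence $a$ is not $\ind^1$-Conant-independent from $b$ over $M$.

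There is no real obstacle: the entire argument is driven by the single observation that $\ind^1$-free types form a subclass of $\ind^2$-free types, which precisely reverses the direction of implication on the universally-quantified Conant-dividing condition (as opposed to the existentially-quantified Kim-dividing condition treated in \thref{implications-between-relative-kims}).
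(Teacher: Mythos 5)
Your proposal is correct and matches the paper's own argument: both rest on the single observation that $\ind^1\implies\ind^2$ makes every global $M$-$\ind^1$-free type $M$-$\ind^2$-free, which reverses the universally quantified Conant-dividing condition, and both deduce the ``in particular'' clause by passing through Conant-forking exactly as in \thref{implications-between-relative-kims}. No gaps.
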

\begin{proof}
    Simply observe that, since $\ind^1 \implies \ind^2$, we have $(\ind^1)^\times \implies (\ind^2)^\times$, and thus we get $(((\ind^1)^\times)^\sfc)^* \implies (((\ind^2)^\times)^\sfc)^*$ by \thref{sfc-properties}(iii) and \thref{ind-star-props}(ii), i.e., $\ind^1$-Conant-independence implies $\ind^2$-Conant-independence. 
\end{proof}
\begin{example}
    $\uind$-Conant-independence (what is called ``Conant-independence'' in \cite{mutchnik2025nsop}) implies $\iind$-Conant-independence (what is called ``Conant-independence'' in \cite{mutchnik2024conant} and \thref{conant-independence}). In particular, combining \thref{univ-kim-forking-iff-univ-kim-dividing,universal-kim-preserves-implications}, we recover \cite[Fact 6.1]{mutchnik2024conant}.
\end{example}
\begin{lemma}\thlabel{ind-kim-implies-universal-ind-kim}
    Let $\ind$ be an independence relation satisfying full existence and monotonicity. Then $\ind$-Kim-independence implies $\ind$-Conant-independence. 
\end{lemma}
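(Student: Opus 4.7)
The plan is to show that, under full existence, $\ind$-Conant-dividing implies $\ind$-Kim-dividing, and to propagate this implication through the definitions of forking and independence.

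First I would observe that full existence guarantees the non-triviality of $\ind$-Kim-dividing: for any $M \models T$ and any $b$, there is some global $M$-$\ind$-free extension $q$ of $\tp(b/M)$, and from such a $q$ one can extract an $M$-indiscernible Morley sequence starting at $b$ (by standard Ramsey and compactness arguments). So the existential quantifier ``there is some $M$-$\ind$-free extension $\ldots$'' in the definition of $\ind$-Kim-dividing ranges over a non-empty collection.

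Next, suppose $\phi(x,b)$ $\ind$-Conant-divides over $M$. By definition, \emph{for every} $M$-$\ind$-free extension $q \supset \tp(b/M)$ and every $M$-indiscernible Morley sequence $(b_i)_{i<\omega}$ in $q$ over $M$ with $b_0 = b$, the set $\{\phi(x,b_i) : i < \omega\}$ is inconsistent. Picking any single such $q$ (which exists by the previous paragraph), we witness that $\phi(x,b)$ $\ind$-Kim-divides over $M$.

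Finally I would lift this to forking. If $\phi(x,b)$ $\ind$-Conant-forks over $M$, then $\phi(x,b) \vdash \bigvee_{i<n} \psi_i(x,c_i)$ with each $\psi_i(x,c_i)$ $\ind$-Conant-dividing over $M$; by the previous step, each $\psi_i(x,c_i)$ then $\ind$-Kim-divides over $M$, so $\phi(x,b)$ $\ind$-Kim-forks over $M$. Taking contrapositives at the level of types: if no formula in $\tp(a/Mb)$ $\ind$-Kim-forks over $M$, then no formula in $\tp(a/Mb)$ $\ind$-Conant-forks over $M$, i.e., $\ind$-Kim-independence implies $\ind$-Conant-independence. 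There is no real obstacle here; the only subtlety is keeping straight that full existence is needed purely to ensure that the existential clause in the definition of $\ind$-Kim-dividing can actually be satisfied.
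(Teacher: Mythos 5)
Your argument is correct and is essentially the paper's own proof, just spelled out in more detail: the paper likewise passes from a $\ind$-Conant-forking formula in $\tp(a/Mb)$ to an $\ind$-Kim-forking one, using full existence precisely to guarantee that a global $M$-$\ind$-free extension exists so that the universally quantified inconsistency in Conant-dividing yields the existentially quantified one in Kim-dividing. No issues.
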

\begin{proof}
    This is clear by the definition of $\ind$-Kim-dividing and \thref{conant-ind-defined-right}.
\end{proof}

\begin{lemma} \thlabel{aind-and-universal-aind-kim}
    In any theory, $\aind = \aind$-Conant-independence.
\end{lemma}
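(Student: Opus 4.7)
The plan is to prove both inclusions; throughout, \thref{univ-kim-forking-iff-univ-kim-dividing} applies to $\aind$ (which satisfies full existence, monotonicity, and both extensions) and reduces $\aind$-Conant-independence to requiring that no formula in $\tp(a/Mb)$ $\aind$-Conant-divides over $M$.

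For $(\Leftarrow)$, I argue contrapositively. Assume $a \nind^{a}_M b$ and pick $c \in \acl(Ma) \cap \acl(Mb) \setminus M$. Choose an algebraic $\chi(x,y) \in \mathcal{L}(M)$ with $\models \chi(a,c)$ and a uniform bound on $|\chi(x', \M)|$ in $x'$ (obtained by conjoining such a bound as an explicit formula); and $\psi(y,z) \in \mathcal{L}(M)$ of minimal algebraic degree with $\models \psi(c,b)$, so that the single $\Aut(\M/Mb)$-orbit $\psi(\M,b)$ lies outside $M$. Put $\phi(x,b) := \exists y\,(\chi(x,y) \wedge \psi(y,b)) \in \tp(a/Mb)$. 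For any $M$-$\aind$-free $q \supset \tp(b/M)$ and any $M$-indiscernible Morley sequence $(b_i)$ in $q$ with $b_0 = b$, $M$-automorphisms (fixing $M$ pointwise) transfer $\psi(\M,b) \cap M = \emptyset$ to $\psi(\M,b_i) \cap M = \emptyset$, so any realisation $a' \models \{\phi(x,b_i)\}$ produces witnesses $c_i \in \chi(a',\M) \cap \psi(\M,b_i) \subseteq \M \setminus M$. Pigeonhole on the uniformly bounded set $\chi(a',\M)$ yields $i_1 < i_2$ with $c_{i_1} = c_{i_2} =: c^*$; since $q$ is $M$-$\aind$-free, $b_{i_2} \aind_M b_{<i_2}$, so $c^* \in \acl(Mb_{i_2}) \cap \acl(Mb_{<i_2}) \subseteq M$, contradicting $c^* \notin M$. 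Hence $\phi$ $\aind$-Conant-divides over $M$, so $a$ is not $\aind$-Conant-independent from $b$ over $M$.

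For $(\Rightarrow)$, assume $a \aind_M b$ and $\phi(x,b) \in \tp(a/Mb)$; I exhibit an $M$-indiscernible Morley sequence in some $M$-$\aind$-free global type starting at $b$ along which $\{\phi(x,b_i)\}$ is consistent. From $b \aind_M a$ by symmetry, iterate right extension of $\aind$ to build $(b_n)_{n<\omega}$ with $b_0 = b$, $b_n \equiv_{Ma} b$ and $b_n \aind_M a b_{<n}$, hence $b_n \aind_M b_{<n}$ by monotonicity. By Ramsey extraction over $Ma$, pass to an $Ma$-indiscernible $(b'_n)$ realising the EM-type of $(b_n)$ over $Ma$, and apply an element of $\Aut(\M/Ma)$ to ensure $b'_0 = b$; each $b'_n \equiv_{Ma} b$ then gives $a \models \phi(x,b'_n)$. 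The $\aind$-Morley property transfers to $(b'_n)$: any hypothetical $c \in \acl(Mb'_n) \cap \acl(Mb'_{<n}) \setminus M$ witnessed by algebraic $\chi, \psi \in \mathcal{L}(M)$ would, via the EM-type, correspond under an $M$-automorphism to the analogous intersection for some $(b_{j_0}, \ldots, b_{j_n})$, which lies in $M$ because $b_{j_n} \aind_M b_{j_0} \ldots b_{j_{n-1}}$; since the automorphism fixes $M$ pointwise and hence preserves containment in $M$, the intersection at $(b'_0, \ldots, b'_n)$ also lies in $M$, a contradiction. Extending $(b'_n)_{n<\omega}$ to $(b'_n)_{n \leq \omega}$ still $M$-indiscernible yields $b'_\omega \aind_M b'_{<\omega}$ by the same argument; right extension of $\aind$ combined with Zorn's lemma then extends $\tp(b'_\omega/Mb'_{<\omega})$ to a global $M$-$\aind$-free type $q$. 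By $M$-indiscernibility, $\tp(b'_n/Mb'_{<n}) = \tp(b'_\omega/Mb'_{<n}) = q|_{Mb'_{<n}}$, realising $(b'_n)_{n<\omega}$ as an $M$-indiscernible Morley sequence in $q$ over $M$; since $a$ satisfies $\{\phi(x,b'_n)\}$, $\phi$ does not $\aind$-Conant-divide over $M$.

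The main obstacle is the second direction: ensuring that the Ramsey-extracted sequence remains $\aind$-Morley (a non-first-order property) and arises as a Morley sequence in some $M$-$\aind$-free global type. The first point is handled by noting that $\Aut(\M/M)$ fixes $M$ pointwise and so preserves containment of algebraic intersections in $M$, letting the first-order EM-type carry the $\aind$-Morley data across the extraction; the second uses right extension of $\aind$ combined with Zorn's lemma to extend the type coherently across the whole monster, rather than only across the parameter set of the indiscernible sequence.
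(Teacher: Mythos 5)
Your right-to-left direction is correct, and it is in substance a self-contained reproof of the fact the paper simply cites: the paper's proof consists of quoting \cite[Lemma 2.7]{kruckman2018generic}, whereas you rebuild that argument — the formula $\exists y(\chi(x,y)\wedge\psi(y,b))$ with the solution bound conjoined into $\chi$, the minimal-degree choice of $\psi$ so that $\psi(\M,b)$ misses $M$ and this transfers to each $b_i\equiv_M b$, and the pigeonhole along any Morley sequence in an $M$-$\aind$-free type. The opening reduction of $\aind$-Conant-forking to $\aind$-Conant-dividing via \thref{univ-kim-forking-iff-univ-kim-dividing} is legitimate, since $\aind$ has full existence, monotonicity and both extensions.

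The gap is in the left-to-right direction, at the extraction step. From an $\omega$-indexed sequence $(b_n)$, Ramsey/compactness extraction of an $Ma$-indiscernible sequence realising the EM-type only guarantees that every formula over $Ma$ satisfied by a finite increasing tuple of $(b'_n)$ is satisfied by \emph{some} increasing tuple of $(b_n)$; it does not produce a tuple of the original sequence with the same complete type over $M$, so the $M$-automorphism you invoke to transport the algebraic intersection need not exist, and membership of the shared algebraic point in $M$ is not a first-order condition that the EM-type can carry for you. As written, the claim that $(b'_n)$ is still $\aind$-Morley is therefore not justified. The standard repair is exactly what the paper does later in \thref{weak-symmetry}: iterate right extension transfinitely to get $(b_\alpha)_{\alpha<\kappa}$ with $b_\alpha\equiv_{Ma}b$ and $b_\alpha\aind_M ab_{<\alpha}$ for $\kappa$ sufficiently large (each step applies right extension directly to $b\aind_M a$, so there is no limit-stage issue), and extract with Erd\H{o}s--Rado, which does give, for every $n$, an increasing tuple of the original sequence realising the same type over $Ma$ as $(b'_0,\dots,b'_n)$; invariance then yields $b'_n\aind_M b'_{<n}$. (One can alternatively salvage the $\omega$-sequence version by re-using your minimal-degree trick on the $\chi$-side and the fact that all $b_n$ and all $b'_n$ realise the single type $\tp(b/Ma)$, but that extra argument is not in your write-up.) The remaining steps of this direction — the end-extension to $b'_\omega$, and the transfinite right-extension construction of a global $M$-$\aind$-free extension of $\tp(b'_\omega/Mb'_{<\omega})$, using the finitary character of $\acl$ at limit stages — are fine, so with the extraction step repaired your argument goes through; note that it then amounts to the specialisation to $\aind$ of \thref{weak-symmetry} together with a hands-on version of \thref{collapse-of-sequences-and-types}.
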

\begin{proof}
    By \cite[Lemma 2.7]{kruckman2018generic}, we know that, in any theory, if $A \nind_C^{\text{a}} B$, then there is a formula $\phi(x,b) \in \tp(a/BC)$ (where $a$ enumerates $A$) that $\aind$-Conant-divides over $C$. This proves the result.
\end{proof}
\begin{corollary}
    For any independence relation $\ind$ satisfying full existence and monotonicity such that $\ind \implies \aind$, we have $\ind$-Conant-independence $\implies \aind$.
\end{corollary}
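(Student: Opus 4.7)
The plan is to derive this statement as an immediate consequence of the two lemmas immediately preceding it, namely \thref{universal-kim-preserves-implications} and \thref{aind-and-universal-aind-kim}. Since the hypothesis already gives us $\ind \implies \aind$, and $\aind$ by itself satisfies full existence (and in fact all the properties listed in \thref{ind-relations}), the natural approach is to pass to Conant-independence on both sides and then collapse the algebraic side.

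First I would apply \thref{universal-kim-preserves-implications} to the assumed implication $\ind \implies \aind$, obtaining that $\ind$-Conant-independence $\implies$ $\aind$-Conant-independence. Next I would invoke \thref{aind-and-universal-aind-kim}, which identifies $\aind$-Conant-independence with $\aind$ itself. Chaining these two together yields $\ind$-Conant-independence $\implies \aind$, which is exactly what is required.

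There is essentially no obstacle here: the full existence hypothesis on $\ind$ is only needed to make $\ind$-Conant-independence a nontrivial object (and to invoke the lemmas that compare it with other relations), while $\aind$ satisfies full existence unconditionally, so \thref{aind-and-universal-aind-kim} applies without any extra hypothesis on $T$. The corollary is therefore purely a two-line composition of previously established implications, and the only thing one needs to notice is that \thref{universal-kim-preserves-implications} is stated at exactly the level of generality required to feed its output into \thref{aind-and-universal-aind-kim}.
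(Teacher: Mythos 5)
Your proposal is correct and is exactly the paper's argument: the paper's proof reads ``Combine \thref{universal-kim-preserves-implications,aind-and-universal-aind-kim}'', i.e.\ apply \thref{universal-kim-preserves-implications} to $\ind \implies \aind$ and then collapse $\aind$-Conant-independence to $\aind$ via \thref{aind-and-universal-aind-kim}. Nothing further is needed.
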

\begin{proof}
    Combine \thref{universal-kim-preserves-implications,aind-and-universal-aind-kim}.
\end{proof}
\section{Notions of witnessing}\label{sec:witnessing}
The notion of \textit{witnessing} has become central to the development of neostability theory outside of NIP.\footnote{This does not mean that the notion of witnessing has been absent from the development of NIP theories; see \cite{kaplan2014strict} for important results in this regard.} The literature on neostability theory (e.g., \cite{chernikov2012forking,kaplan2020kim,kaplan2021transitivity}) seems to introduce slightly different formalisations of this notion, meant to capture the idea that an instance of (appropriate) dividing is witnessed by a whole class of ``generic'' sequences. In this section, we will introduce three such formalisations that make explicit those notions one can find in the literature. 

The first approach adapts the notion of \textit{relative Kim's lemma} from \cite{mutchnik2024conant}. We first need to generalise the concept of \textit{class} in the definitions of the dividing order in \cite{yaacov2014independence} and the Kim-dividing order in \cite{mutchnik2024conant}:
\begin{definition} \thlabel{class-of-a-type}
    Fix $M \models T$.
    \begin{enumerate}[(i)]
        \item For a global type $p$, we define the \textbf{class of $p$}, denoted $\cl(p)$, to be the class of $\mathcal{L}(M)$-formulas $\phi(x,y)$ such that, for any $M$-indiscernible Morley sequence $(a_i)_{i \in \omega}$ in $p$ over $M$, the set $\{\phi(x, a_i) : i \in \omega\}$ is inconsistent.
        \item Let $\ind^0$ be an independence relation and fix $r \in S(M)$. We denote by $C_0^r$ the class of global $M$-$\ind^0$-free extensions of $r$. Given $M \models T$ and $r \in S(M)$, we say that a global extension $p$ of $r$ is \textbf{$\leq_0$-greatest} if, for all global $q \in C_0^r$, we have $\cl(q) \subseteq \cl(p)$.
    \end{enumerate}
\end{definition}
\begin{remark}
    In general, despite what the name may suggest, $\leq_0$-greatest extensions of types need not be unique, as different global types may share the same class over $M$.
\end{remark}
\begin{example}
    \begin{enumerate}[(i)]
        \item $\find$: $C_{\text{f}}^r$ is the class of global non-forking extensions of $r$. This is closely related to the \textbf{dividing order} from \cite{yaacov2014independence}.
        \item $\iind$: $C_{\text{K}}^r$ is the class of global $M$-invariant extensions of $r$. This is closely related to the \textbf{Kim-dividing order} from \cite{mutchnik2024conant}.
    \end{enumerate}
\end{example}
If we have independence relations $\ind^1$ and $\ind^2$, we use $C_1^r$ and $\leq_1$ (resp., $C_2^r$ and $\leq_2$) to refer to the previous notions as they apply to $\ind^1$ (resp., $\ind^2$). 
\begin{example}
    Observe that, if $M \models T$ and $p$ is $M$-invariant, then for any $\phi(x,y) \in \mathcal{L}(M)$, we have $\phi(x,y) \in \cl(p)$ iff there is some $M$-indiscernible Morley sequence $(a_i)_{i \in \omega}$ in $p$ over $M$ such that $\{\phi(x,a_i) : i \in \omega\}$ is inconsistent.
\end{example}
\begin{definition}
    Let $\ind^1$ and $\ind^2$ be independence relations satisfying full existence and monotonicity. We say that $\ind^1$ satisfies the \textbf{universal witnessing property} (\textbf{UWP}) with respect to $\ind^2$ if $\ind^1$-free types are $\leq_2$-greatest, i.e., for all $M \models T$ and $r \in S(M)$, if $p$ is a global $M$-$\ind^1$-free extension of $r$, then $p$ is $\leq_2$-greatest.
\end{definition}
We can render the above definition more explicitly (without appealing to classes of types) as follows: $\ind^1$ satisfies UWP w.r.t. $\ind^2$ iff, whenever $\phi(x,b)$ $\ind^2$-Kim-divides over $M$, we have that $\{\phi(x,b_i) : i < \omega\}$ is inconsistent for all global $\ind^1$-$M$-free extensions $q$ of $\tp(b/M)$ and every $M$-indiscernible Morley sequence $(b_i)_{i < \omega}$ in $q$ over $M$ with $b_0 = b$.
\begin{terminology}
    Given an independence relation $\ind^1$, sometimes we say that $\leq_1$ is \textbf{trivial} if $\ind^1$ satisfies UWP w.r.t. $\ind^1$. This is equivalent to saying that, for all $r \in S(M)$ and all $p,q \in C_1^r$, $\cl(p) = \cl(q)$. 
\end{terminology}
\begin{example} \thlabel{examples-of-uwp}
    \begin{enumerate}[(i)]
        \item (\cite[Proposition 2.1]{kim1998forking}, \cite[Theorem 2.4]{kim2001simplicity}) $T$ is simple iff $\leq_{\text{f}}$ is trivial. In particular, $\find$ satisfies UWP w.r.t. $\aind$.
        \item (\cite[Lemma 3.14]{chernikov2012forking}) If $T$ is $\NTP_2$, then $\ind^{\textnormal{ist}}$ satisfies UWP w.r.t. $\aind$.
        \item (\cite[Theorem 3.16]{kaplan2020kim}) $T$ is $\NSOP_1$ iff $\iind$ satisfies UWP w.r.t. $\iind$.
        \item (\cite[Theorem 5.2]{kruckman2018generic}) If $T$ is $\NBTP$, then $\kstind$ has UWP w.r.t. $\iind$.
        \item (\cite[Theorem 1.8]{hanson2023bi}) $T$ is $\NCTP$ iff $\ind^{\textnormal{bi}}$ has UWP w.r.t. $\iind$.
        \item (\cite[Theorem 6.2]{mutchnik2025nsop}) If there is an independence relation $\ind$ satisfying full existence such that $\iind$ satisfies UWP w.r.t $\ind$ and $\ind$-Kim-independence is symmetric, then $T$ is $\NSOP_4$.
    \end{enumerate}
\end{example}
Another possibility for formalising the notion of witnessing is to forgo the restriction to global $\ind$-free extensions and talk directly about $\ind$-Morley sequences. This is closer to the original role that Morley sequences played in simple theories, and also aligns better with the phrasing of results in several contexts. For instance:
\begin{itemize}
    \item In simple theories, one can show that, whenever $a \find_M b$, there is a $\find$-Morley sequence $(a_i)_{i \in \omega}$ over $M$ starting at $a$ that is $Mb$-indiscernible, and from this one deduces symmetry for $\find$.
    \item In $\NSOP_1$ theories, one can show that, whenever $a \kind_M b$, there is a tree Morley sequence $(a_i)_{i \in \omega}$ over $M$ starting at $a$ that is $Mb$-indiscernible (see \cite[Lemma 5.12]{kaplan2020kim}), and from this one deduces symmetry. It was later realised (see \cite{kaplan2021transitivity}) that it is enough to find a $\kind$-Morley sequence over $M$ with the same properties. 
\end{itemize}
\textit{A priori}, there is no explicit appeal to global extensions of types in either of these two cases. This motivates the following notion of witnessing:
\begin{definition}\thlabel{guwp-def}
    Let $\ind^1$ and $\ind^2$ be independence relations satisfying full existence and monotonicity. We say that $\ind^1$ satisfies the \textbf{generalised universal witnessing property} (\textbf{GUWP}) with respect to $\ind^2$ if, whenever $\phi(x,b)$ $\ind^2$-Kim-divides over $M$, it also $\ind^1$-Conant-divides over $M$. 
\end{definition}
\begin{remark}
    GUWP $\implies$ UWP.
\end{remark}
Finally, a different notion of witnessing that has appeared in the literature can be found, e.g., in the Kim-Pillay-style theorem for $\NSOP_1$ theories in \cite{kaplan2020kim}. The noteworthy modification is that we begin with a \textit{type} that $\ind$-Kim-divides, instead of a formula. 
\begin{definition}
    Let $\ind^1$ and $\ind^2$ be independence relations satisfying full existence and monotonicity. We say that $\ind^1$ satisfies the \textbf{type universal witnessing property} (\textbf{TUWP}) with respect to $\ind^2$ if, whenever $M \subset B$ and $p(x) \in S(B)$ $\ind^2$-Kim-divides over $M$, there is a formula $\phi(x,b)\in p(x)$ that $\ind^1$-Conant-divides over $M$. 
\end{definition}
TUWP admits two other important interpretations: as the analogue of GUWP for the notion of weak $\ind$-Kim-dividing; and as an implication between independence relations. 
\begin{lemma}\thlabel{tuwp-is-guwp-with-weak-kim-dividing}
    Suppose that $\ind^1$ and $\ind^2$ are independence relations satisfying full existence and monotonicity. The following are equivalent:
    \begin{enumerate}[(i)]
        \item $\ind^1$ satisfies TUWP w.r.t. $\ind^2$.
        \item If $\phi(x,b)$ weakly $\ind^2$-Kim-divides over $M$, then it also $\ind^1$-Conant-divides over $M$. 
        \item $((\ind^1)^\times)^\sfc \implies (\ind^2)^+$. 
    \end{enumerate}
\end{lemma}
\begin{proof}
    ((i) $\Rightarrow$ (iii)) Suppose that $A \: (\nind^2)^+_M \: B$. Let $p(x) := \tp(a/MB)$, where $a$ enumerates $A$. Then $p(x)$ is not $(\ind^2)^+$-free over $M$, and thus, by \thref{forall-dividing-for-complete-types}, $p(x)$ $\ind^2$-Kim-divides over $M$. By TUWP, there is some formula $\phi(x',b) \in p(x)$ with $x' \subseteq x$ finite that $\ind^1$-Conant-divides over $M$. By definition, $\phi(x',b)$ is not $(\ind^1)^\times$-free over $M$. Therefore, $A \: ((\nind^1)^\times)^\sfc_M \: B$. 

    ((iii) $\Rightarrow$ (ii)) Suppose $\phi(x,b)$ weakly $\ind^2$-Kim-divides over $M$. By definition, for each $a \models \phi(x,b)$, we must have $a \: (\nind^2)^+_M \: b$. By our assumption, $a \: ((\nind^1)^\times)^\sfc_M \: b$. Thus, there is some $\psi_a(x_a,b_a) \in \tp(a/Mb)$ with $x_a \subseteq x$, $b_a \subseteq b$ that is not $(\ind^1)^\times$-free over $M$. Hence, $\psi_a(x_a,b_a)$ $\ind^1$-Conant-divides over $M$. 

    Now, the set $\{\phi(x,b)\} \cup \{\neg \psi_a(x_a,b_a) : a \models \phi(x,b)\}$ is inconsistent. Hence, by compactness, there exist realisations $a_0,\dots,a_{n-1} \models \phi(x,b)$ such that 
    \begin{equation*}
        \phi(x,b) \vdash \bigvee_{i < n} \psi_{a_i}(x_{a_i},b_{a_i}).
    \end{equation*}
    We now use \thref{conant-ind-defined-right}(i). Let $(b_j)_{j < \omega}$ be an $\ind^1$-Morley sequence over $M$ with $b_0 = b$. Assume, for contradiction, that $\{\phi(x,b_j) : j < \omega\}$ is consistent. Let $c$ be a realisation. By the above and pigeonhole, there is some $i < n$ an infinite $I \subseteq \omega$ such that $c \models \{\psi_{a_i}(x_{a_i}, b_{a_i,j}) : j \in I\}$, denoting as usual by $b_{a_i,j}$ the restriction of $b_j$ to the appropriate variables. By $M$-indiscernibility and monotonicity of $\ind^1$, this contradicts the fact that $\psi_{a_i}(x_{a_i},b_{a_i})$ $\ind^1$-Conant-divides over $M$. 

    ((ii) $\Rightarrow$ (i)) Suppose that $\tp(a/Mb)$ $\ind^2$-Kim-divides over $M$. By \thref{dividing-for-complete-types}, $\tp(a/Mb)$ is not $(\ind^2)^+$-free over $M$. Hence, by \thref{sfc-for-ind-plus,sfc-criterion}, there is a formula $\phi(x,b) \in \tp(a/Mb)$ which is not $(\ind^2)^+$-free over $M$, i.e., which weakly $\ind^2$-Kim-divides over $M$. Thus, by our assumption, $\phi(x,b)$ $\ind^1$-Conant-divides over $M$, as required.
\end{proof}
For the remainder of this section, we will describe the relationships between these witnessing notions. We start with noting that, for the purposes of witnessing, the distinction between formulas and types is not important:
\begin{lemma}\thlabel{tuwp-and-guwp}
    Suppose that $\ind^1$ and $\ind^2$ are independence relations satisfying full existence and monotonicity. Then $\ind^1$ satisfies GUWP w.r.t. $\ind^2$ iff it satisfies TUWP w.r.t. $\ind^2$. 
\end{lemma}
\begin{proof}
    ($\Rightarrow$) Immediate. 

    ($\Leftarrow$) If $\phi(x,b)$ $\ind^2$-Kim-divides over $M$, then by \thref{properties-of-kim-independence}(i), $\phi(x,b)$ weakly $\ind^2$-Kim-divides over $M$. Hence, by TUWP and \thref{tuwp-is-guwp-with-weak-kim-dividing}, $\phi(x,b)$ $\ind^1$-Conant-divides over $M$, as required. 
    %
\end{proof}
These results make precise the claim made in the previous section that, for the purposes of neostability theory (centred on consequences of Kim's lemma), we can essentially sidestep the distinction between $\ind$-Kim-dividing and weak $\ind$-Kim-dividing. This makes it possible to fully treat the upcoming results of this paper in terms of $\ind^+$-freeness. It also shows, via \thref{tuwp-is-guwp-with-weak-kim-dividing}, that Kim's lemma can be viewed, at its core, as a relationship at the level of abstract independence relations.

In contrast, it is not immediate that the notions of UWP and GUWP need to coincide for arbitrary independence relations. We will now show that, in fact, for most independence relations considered in the literature, they do coincide. A suggestive remark regarding the relationship between GUWP and UWP comes from the proof of \cite[Proposition 5.13]{kaplan2020kim}: if $(a_i)_{i \in \omega}$ is an $\iind$-Morley sequence over $M$, then by ``the compactness of the space of $M$-invariant types,'' we can find a global $M$-invariant extension $q \supset \tp(a_0/M)$ such that $(a_i)_{i \in \omega}$ is Morley in $q$ over $M$. The purpose of this section is to employ the notion of \textit{quasi-strong finite character}, introduced by Mutchnik in \cite{mutchnik2025nsop} and developed further in \cite{kim2022some}, to generalise these results.

We start by adapting some arguments from \cite{kim2022some}. As in that paper, assuming $\ind$ has full existence, monotonicity and quasi-strong finite character, we can define $\Sigma_{a \ind_M b}(x,y)$ to be the $\subseteq$-maximal set of formulas $\Sigma(x,y)$ such that $a',b' \models \Sigma(x,y)$ iff $a \equiv_M a'$, $b \equiv_M b'$, and $a' \ind_M b'$. The following weakening of \cite[Corollary 3.16]{kim2022some}, with a similar proof, does not require $\NATP$ (we will see in \S9 that this applies to the full result too):
\begin{lemma} \thlabel{type-definable-free-extensions}
    Suppose $\ind$ satisfies full existence, monotonicity, and quasi-strong finite character. Let $M \models T$, $a \models p(x) \in S(M)$, and $\Sigma(x) := \{\protect{\psi(x', d') \in \mathcal{L}(M)} : x' \subseteq x$, $z' \subseteq z$ such that $\psi(x', z') \in \Sigma_{a \ind_M d}(x, z)$ and $d' \subseteq d$ is a tuple corresponding to $z'\}$. Then $p(x) \cup \Sigma(x)$ is consistent, and any completion is $\ind$-free over $M$. 
\end{lemma}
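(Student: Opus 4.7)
The plan is a compactness argument for consistency, followed by a direct check using quasi-strong finite character for the second claim. The underlying idea is that $\Sigma(x)$ is a formula-by-formula translation of the assertion ``$a' \equiv_M a$ and $a'$ is $\ind$-independent over $M$ from every parameter tuple in $\M$,'' which the assumption of quasi-strong finite character makes type-definable.

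For consistency, I would take a finite subset $\{\psi_i(x'_i, d'_i)\}_{i<n}$ of $\Sigma(x)$, where each $\psi_i(x'_i, z'_i)$ lies in $\Sigma_{a \ind_M d_i}(x, z_i)$ and $d'_i$ is the subtuple of $d_i$ corresponding to $z'_i$. Concatenating the $d_i$'s into a single tuple $d$ and applying full existence, one obtains some $a^* \equiv_M a$ with $a^* \ind_M d$. Monotonicity then yields $a^* \ind_M d_i$ for each $i$, so $(a^*, d_i) \models \Sigma_{a \ind_M d_i}(x, z_i)$ by the defining biconditional of $\Sigma_{p,q}$ (combined with $a^* \equiv_M a$); in particular $\models \psi_i(a^*|_{x'_i}, d'_i)$ for every $i$. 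Together with $a^* \models p$, this realises the finite fragment under consideration, and compactness concludes.

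For the second part, let $q \in S(\M)$ be any completion of $p(x) \cup \Sigma(x)$. Given $B \subseteq \M$ enumerated by $b$ and some $a' \models q|_{Mb}$, suppose for contradiction $a' \nind_M b$. Since $a' \models p$, we have $a' \equiv_M a$; the defining biconditional of $\Sigma_{a \ind_M b}(x, z)$ then supplies a formula $\psi(x', z') \in \Sigma_{a \ind_M b}$ with $x', z'$ finite subtuples such that $\nmodels \psi(a'|_{x'}, b|_{z'})$. Taking $d := b$ and $d' := b|_{z'}$ in the definition of $\Sigma(x)$ shows $\psi(x', d') \in \Sigma(x) \subseteq q$, so $a' \models \psi(x', d')$, contradicting the previous line. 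Hence $q$ is $M$-$\ind$-free.

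I do not anticipate any serious obstacle beyond careful bookkeeping of finite subtuples; both steps are essentially direct applications of the axioms of $\ind$ and the definition of quasi-strong finite character, and no further model-theoretic machinery seems necessary.
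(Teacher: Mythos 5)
Your proof is correct and takes essentially the same route as the paper's: finite satisfiability of $p(x)\cup\Sigma(x)$ via full existence plus right monotonicity combined with the defining biconditional from quasi-strong finite character, and then the biconditional again (with $d:=b$) to get $\ind$-freeness of any completion. The differences are cosmetic — you argue directly rather than by contradiction, and your bookkeeping of the full tuples $d_i$ (applying full existence to their concatenation before restricting) is, if anything, slightly tidier than the paper's.
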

\begin{proof}
    Assume, for contradiction, that $p(x) \cup \Sigma(x)$ is not consistent. So 
    \begin{equation*}
        p(x) \vdash \bigvee_{i<n} \psi_i(x'_i, d'_i),
    \end{equation*}
    where $\neg \psi_i(x'_i, d'_i) \in \Sigma(x)$ for all $i < n$. Let $a \models p(x)$. By full existence, there is $a^* = (a_i^*)$ with $a^* \equiv_M a$ such that $a^* \ind_M d'_{<n}$. Since $a^* \models p(x)$, there is some $i$ such that $\models \psi_i(a_i^*, d'_i)$. Thus, $(a_i^*, d'_i) \not\models \Sigma_{a \ind_M d'_i}(x,z_i)$, and so, it follows that $a^* \nind_M d'_i$. This contradicts monotonicity. 

    Now let $q(x)$ be a completion of $p(x) \cup \Sigma(x)$. Let $a \models q|_{Mb}$. Then, in particular, $a \models \Sigma(x)|_{Mb}$, and thus, $a \models \Sigma_{a \ind_M b}(x, b)$. Therefore, $a \ind_M b$, as required. 
\end{proof}
\begin{proposition} \thlabel{collapse-of-sequences-and-types}
    Suppose $\ind$ satisfies existence, monotonicity, right extension, and quasi-strong finite character, and let $M \models T$. If $(a_i)_{i < \omega}$ is an $\ind$-Morley sequence over $M$, then there is a global $M$-$\ind$-free extension $q \supset \tp(a_0/M)$ such that $(a_i)_{i < \omega}$ is Morley in $q$ over $M$. 
\end{proposition}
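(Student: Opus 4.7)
The plan is to build a global $M$-$\ind$-free extension $q$ of $p:=\tp(a_0/M)$ directly by compactness, using the type-definable ``freeness witnesses'' produced by \thref{type-definable-free-extensions} together with the given Morley sequence to pin down $q$'s restriction to each $Ma_{<i}$.

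First, note that monotonicity, existence, and right extension together yield full existence, so \thref{type-definable-free-extensions} applies and gives us a set of $\M$-formulas $\Sigma_p(x)$ such that any completion of $p(x)\cup\Sigma_p(x)$ to a global type is $M$-$\ind$-free. Define
\begin{equation*}
\Phi(x) \;:=\; \Sigma_p(x)\;\cup\;\bigcup_{i<\omega}\tp(a_i/Ma_{<i}).
\end{equation*}
Once $\Phi(x)$ is consistent, I take any complete global extension $q\supseteq\Phi(x)$: it is $M$-$\ind$-free by \thref{type-definable-free-extensions}, and because $\tp(a_i/Ma_{<i})\subseteq q$ is already complete over $Ma_{<i}$, the restriction $q|_{Ma_{<i}}$ must equal $\tp(a_i/Ma_{<i})$, so $(a_i)_{i<\omega}$ is Morley in $q$ over $M$.

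The main work, and the only place where care is needed, is showing $\Phi(x)$ is consistent via compactness. Take finite subsets $F_r\subseteq\bigcup_i\tp(a_i/Ma_{<i})$ (with parameters in $Ma_{<N}$ for some $N$) and $F_\Sigma=\{\psi_j(x'_j,d'_j):j<k\}\subseteq\Sigma_p(x)$; let $D=d'_0\cup\cdots\cup d'_{k-1}$. $M$-indiscernibility of the sequence implies that $a_N$ realises $\tp(a_i/Ma_{<i})$ for all $i\le N$ (since the tuples $(a_0,\dots,a_{i-1},a_i)$ and $(a_0,\dots,a_{i-1},a_N)$ have the same type over $M$), so $a_N\models F_r$. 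Applying right extension to $a_N\ind_M a_{<N}$, I find $a^*\equiv_{Ma_{<N}}a_N$ with $a^*\ind_M (a_{<N}\cup D)$, hence $a^*\ind_M d'_j$ for each $j$ by monotonicity; since $a^*\equiv_M a_0$ realises $p$, quasi-strong finite character gives $a^*\models\psi_j(x'_j,d'_j)$ for all $j$, so $a^*\models F_r\cup F_\Sigma$.

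The step I expect to be the main obstacle is the coordination between $F_r$ and $F_\Sigma$: one has to adjust the realisation of the ``type part'' so that it also witnesses $\ind$-freeness with respect to the parameters appearing in the ``$\Sigma_p$ part'', without moving the type over $Ma_{<N}$. Right extension is exactly what makes this possible — it lets me stay inside the class $\equiv_{Ma_{<N}}a_N$ while enlarging the set over which $\ind_M$ holds — and the existence of the ``initial'' freeness $a_N\ind_M a_{<N}$ comes for free from the $\ind$-Morley hypothesis on the sequence. Everything else is a routine unpacking of quasi-strong finite character and the definition of $\Sigma_p$.
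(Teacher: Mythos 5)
Your overall strategy is the paper's: realise $\tp(a_0/M)\cup\Sigma_p(x)$ together with the type of the sequence by compactness, and invoke \thref{type-definable-free-extensions} to get $M$-$\ind$-freeness of any global completion. (Your way of concluding Morley-ness --- $q$ contains the complete type $\tp(a_i/Ma_{<i})$, hence restricts to it --- is a clean shortcut past the paper's detour through a limit point $a_\omega$ and an automorphism.) However, there is a genuine gap in the compactness step, exactly at the point you dismiss as routine unpacking. A formula $\psi_j(x'_j,d'_j)\in\Sigma_p(x)$ is, by the definition in \thref{type-definable-free-extensions}, certified by the membership of $\psi_j(x'_j,z'_j)$ in $\Sigma_{a\ind_M d_j}(x,z_j)$ for some tuple $d_j$ of which $d'_j$ is merely a finite subtuple. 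Quasi-strong finite character therefore guarantees $\models\psi_j(a^*|_{x'_j},d'_j)$ only when $a^*\models p$ and $a^*\ind_M d_j$ for the \emph{full} certifying tuple $d_j$, for then $(a^*,d_j)$ realises all of $\Sigma_{a\ind_M d_j}(x,z_j)$. Your $a^*$ only satisfies $a^*\ind_M d'_j$. Membership of $\psi_j$ in $\Sigma_{a\ind_M d_j}$ does not yield membership in $\Sigma_{a\ind_M d'_j}$: the inclusion established in \S 4 runs in the opposite direction ($\Sigma_{a'\ind_M b'}(x',y')\subseteq\Sigma_{a\ind_M b}(x,y)$), and the two restrictions are only equivalent under left extension, which is not among the hypotheses. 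So a pair $(a^*,d'_j)$ with $a^*\ind_M d'_j$ that does not extend to a fully independent pair over a conjugate of $d_j$ may fail $\psi_j$, and your argument does not rule this out.

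The repair is small and is in effect what the paper does. Either take $D$ to be the union of the full certifying tuples $d_0,\dots,d_{k-1}$ rather than of the parameter subtuples $d'_j$ --- right extension is stated for arbitrary supersets $B'$, so $a^*\ind_M(a_{<N}\cup D)$ is still available, and monotonicity then gives $a^*\ind_M d_j$ for each $j$ --- or, as in the paper's proof, realise a suitable restriction of a global $M$-$\ind$-free extension of $\tp(a_N/Ma_{<N})$ (obtained from $a_N\ind_M a_{<N}$ by right extension), whose realisations are independent over $M$ from any prescribed set. With that one change the proposal is correct and coincides with the paper's argument.
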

\begin{proof}
    We adapt the proof from \cite[Proposition 4.4]{dobrowolski2024correction}. By compactness, we find $a_\omega$ such that $(a_i)_{i \leq \omega}$ is $M$-indiscernible. Let $p(x) := \tp(a_\omega/Ma_{<\omega})$ and let $\Sigma(x)$ be as in the above Lemma (over $M$).
    \begin{claim}
        $p(x) \cup \Sigma(x)$ is consistent. 
    \end{claim}
    \begin{proof}[Proof of Claim]
        For any finite $p'(x) \subseteq p(x)$, there is some $i < \omega$ such that $p'(x)$ contains only parameters from $Ma_{<i}$, and so $a_i \models p'$ by $M$-indiscernibility. Since $a_i \ind_M a_{<i}$, by right extension $p'(x)$ extends to a global $M$-$\ind$-free type $q'(x)$, and any realisation of $q'(x)$ will then be a realisation of $p'(x) \cup \Sigma(x)$. Thus, $p'(x) \cup \Sigma(x)$ is finitely consistent. \hfill \pushQED{$\qed_{\text{Claim}}$}
    \end{proof}
    Let $a' \models p(x) \cup \Sigma(x)$, and set $q'(x) := \tp(a'/\M)$. Hence, $q'(x)$ is a completion of $\tp(a_0/M) \cup \Sigma(x)$, and thus, by \thref{type-definable-free-extensions}, it is $\ind$-free over $M$. Now, if we have some $a'' \equiv_{Ma_{<\omega}} a'$, then there is some $\sigma \in \Aut(\M/Ma_{<\omega})$ such that $\sigma(a'') = a_\omega$. Set $q := \sigma(q')$, so that, by invariance for $\ind$, $q$ is also a global $M$-$\ind$-free extension of $p$ and, for any $a \models q$, we have $a \equiv_{Ma_{<\omega}} a_\omega$.

    For any $i < \omega$, we thus have $a_i \equiv_{Ma_{<i}} a_\omega \equiv_{Ma_{<i}} a$. We therefore have $a_i \models q|_{M a_{<i}}$ for all $i < \omega$, and so $(a_i)_{i < \omega}$ is a Morley sequence in $q$ over $M$.
\end{proof}
\begin{corollary}
    Suppose $\ind$ satisfies existence, monotonicity, right extension, and quasi-strong finite character. Then $\phi(x,b)$ $\ind$-Kim-divides over $M$ iff there is some global $M$-$\ind$-free extension $q$ of $\tp(b/M)$ and some $M$-indiscernible Morley sequence $(b_i)_{i < \omega}$ in $q$ over $M$ with $b_0 = b$ such that $\{\phi(x, b_i) : i < \omega\}$ is inconsistent. 
\end{corollary}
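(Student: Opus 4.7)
The plan is to derive both directions almost immediately from material already in hand, with the forward direction unfolding the definition of $\ind$-Kim-dividing and the backward direction being exactly what \thref{collapse-of-sequences-and-types} was designed to give us.

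For the $(\Rightarrow)$ direction I would just unpack \thref{relative-kim-ind}: if $\phi(x,b)$ $\ind$-Kim-divides over $M$, then there is a global $M$-$\ind$-free extension $q \supset \tp(b/M)$ together with an $M$-indiscernible Morley sequence $(b_i)_{i<\omega}$ in $q$ over $M$ with $b_0 = b$ along which $\{\phi(x,b_i) : i < \omega\}$ is inconsistent. By \thref{morleys-around}, any Morley sequence in an $M$-$\ind$-free type is $\ind$-independent over $M$, so combined with $M$-indiscernibility this sequence is an $\ind$-Morley sequence over $M$ with the required inconsistency property.

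For the $(\Leftarrow)$ direction, suppose $(b_i)_{i<\omega}$ is an $\ind$-Morley sequence over $M$ with $b_0 = b$ such that $\{\phi(x,b_i) : i < \omega\}$ is inconsistent. The hypotheses we have imposed on $\ind$ (existence, monotonicity, right extension, and quasi-strong finite character) match precisely those required by \thref{collapse-of-sequences-and-types}, so we may apply it to obtain a global $M$-$\ind$-free extension $q \supset \tp(b/M)$ such that $(b_i)$ is Morley in $q$ over $M$. Since $(b_i)$ is automatically $M$-indiscernible and $\{\phi(x,b_i) : i < \omega\}$ is inconsistent, this pair $(q, (b_i))$ witnesses that $\phi(x,b)$ $\ind$-Kim-divides over $M$.

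The only genuine content is concentrated in \thref{collapse-of-sequences-and-types}, whose proof relies on quasi-strong finite character to produce a type-definable family of $\ind$-free extensions (\thref{type-definable-free-extensions}) through which an arbitrary $\ind$-Morley sequence can be realised as a Morley sequence in some global $M$-$\ind$-free type. Granted that result, the corollary is immediate bookkeeping against the definition of $\ind$-Kim-dividing, so I do not anticipate any further obstacle here.
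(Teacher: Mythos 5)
Your proposal is correct and matches the paper's argument exactly: the forward direction is the immediate unpacking of the definition (via \thref{morleys-around}), and the backward direction is precisely the application of \thref{collapse-of-sequences-and-types} to realise the given $\ind$-Morley sequence as a Morley sequence in some global $M$-$\ind$-free type. No further comment is needed.
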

\begin{proof}
    ($\Leftarrow$) Clear, and requires no assumptions. 

    ($\Rightarrow$) Suppose $\phi(x,b)$ $\ind$-Kim-divides over $M$. So there is some $\ind$-Morley sequence $(b_i)_{i \in \omega}$ over $M$ such that $\{\phi(x,b_i) : i < \omega\}$ is inconsistent. By \thref{collapse-of-sequences-and-types}, there is a global $M$-$\ind$-free type $q$ extending $\tp(b/M)$ such that $(b_i)_{i \in \omega}$ is a Morley sequence in $q$ over $M$. 
\end{proof}
\begin{example}
    In particular, this shows that our definition of Kim-independence in \thref{kim-independence} does, indeed, coincide with the definition in \cite{kaplan2020kim} (which was implicitly assumed in \thref{ind-relations}).
\end{example}
\begin{corollary}
    Suppose $\ind^1$ satisfies existence, monotonicity, right extension, and quasi-strong finite character, and $\ind^2$ satisfies full existence. Then $\ind^1$ satisfies UWP w.r.t. $\ind^2$ iff it satisfies GUWP w.r.t $\ind^2$. 
\end{corollary}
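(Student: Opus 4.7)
The plan is to combine the remark that GUWP always implies UWP with a straightforward application of \thref{collapse-of-sequences-and-types} for the converse direction. Since the easy direction is already noted, I will concentrate the proof on the reverse implication, which is where the hypotheses on $\ind^1$ are actually used.

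For the forward direction, assume $\ind^1$ satisfies UWP with respect to $\ind^2$. Let $\phi(x,b)$ be a formula that $\ind^2$-Kim-divides over some $M \models T$, and let $(b_i)_{i<\omega}$ be any $\ind^1$-Morley sequence over $M$ with $b_0 = b$. The goal is to prove $\{\phi(x,b_i) : i < \omega\}$ is inconsistent. Since $\ind^1$ satisfies existence, monotonicity, right extension, and quasi-strong finite character, \thref{collapse-of-sequences-and-types} applies to provide a global $M$-$\ind^1$-free extension $q \supset \tp(b/M)$ such that $(b_i)_{i<\omega}$ is a Morley sequence in $q$ over $M$.

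Next, I would invoke UWP directly. By assumption $q$ is $\leq_2$-greatest, so $\cl(q') \subseteq \cl(q)$ for every $q' \in C_2^{\tp(b/M)}$. Since $\phi(x,b)$ $\ind^2$-Kim-divides over $M$, the definition of $\ind^2$-Kim-dividing yields some $q' \in C_2^{\tp(b/M)}$ with $\phi(x,y) \in \cl(q')$, and hence $\phi(x,y) \in \cl(q)$. As $(b_i)_{i<\omega}$ is an $M$-indiscernible Morley sequence in $q$ over $M$ (indiscernibility being built into the definition of an $\ind^1$-Morley sequence), the definition of $\cl(q)$ immediately gives that $\{\phi(x,b_i) : i < \omega\}$ is inconsistent. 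This shows GUWP with respect to $\ind^2$.

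There is no real obstacle here: the entire content has been packaged in the preceding Proposition, and what remains is a two-line unwinding of definitions. The one subtlety worth flagging is why the sequence $(b_i)_{i<\omega}$, which only a priori witnesses $\ind^1$-Morleyness, can be read as a Morley sequence in a global $\ind^1$-free type — and that is precisely what \thref{collapse-of-sequences-and-types} is designed to supply, with quasi-strong finite character doing the decisive work via \thref{type-definable-free-extensions}.
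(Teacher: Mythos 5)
Your proposal is correct and follows exactly the paper's argument: the easy direction is the remark that GUWP implies UWP (via \thref{morleys-around}), and the substantive direction converts an arbitrary $\ind^1$-Morley sequence into a Morley sequence in a global $M$-$\ind^1$-free type via \thref{collapse-of-sequences-and-types}, then unwinds the definition of $\leq_2$-greatest. The unwinding of UWP in terms of $\cl(q)$ is precisely what the paper does, just spelled out in slightly more detail.
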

\begin{proof}
    ($\Leftarrow$) Clear, since every $M$-indiscernible Morley sequence over $M$ in a global $M$-$\ind$-free extension of $\tp(b/M)$ is in particular $\ind$-Morley over $M$.

    ($\Rightarrow$) Suppose $\phi(x,b)$ $\ind^2$-Kim-divides over $M$. Let $(b_i)_{i \in \omega}$ be an $\ind^1$-Morley sequence over $M$ with $b_0 = b$. By \thref{collapse-of-sequences-and-types}, we can find some global $M$-$\ind^1$-free extension $q \supset \tp(b/M)$ such that $(b_i)_{i \in \omega}$ is Morley in $q$ over $M$. By UWP, $\phi(x,y) \in \cl(q)$, which means that $\{\phi(x, b_i) : i \in \omega\}$ is inconsistent, as required. 
\end{proof}
In particular, since strong finite character implies quasi-strong finite character, $\uind$, $\iind$, $\find$, and $\aind$ all have quasi-strong finite character, and by \thref{mon-and-strong-fin-char-are-preserved,mon-and-strong-fin-for-universal-kim}, this also holds for $\ind$-Kim-independence and $\ind$-Conant-independence for any $\ind$ with full existence and monotonicity. This means that all the results on witnessing in the context of simple, $\NTP_2$, $\NSOP_1$, etc. theories from \thref{examples-of-uwp} can be equivalently formulated in terms of GUWP or TUWP. 

Let us finish this section with some basic results about GUWP, which, by the above, can be easily reformulated in terms of UWP and TUWP:
\begin{lemma} \thlabel{relative-dominating-between-relations}
    Let $\ind^1 \implies \ind^2$ and $\ind^3 \implies \ind^4$ be independence relations satisfying full existence and monotonicity. If $\ind^2$ satisfies GUWP w.r.t. $\ind^4$, then $\ind^1$ satisfies GUWP w.r.t. $\ind^3$.
\end{lemma}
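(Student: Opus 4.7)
The plan is to reduce the desired conclusion to an instance of the hypothesised GUWP of $\ind^2$ with respect to $\ind^4$ by pushing both sides of the configuration along the given implications. The key observation is that the ``GUWP w.r.t.\ $\ind^4$'' hypothesis is naturally a stronger statement than ``GUWP w.r.t.\ $\ind^3$'' (more formulas $\ind^4$-Kim-divide), while requiring the inconsistency along $\ind^2$-Morley sequences is stronger than requiring it along $\ind^1$-Morley sequences (there are more $\ind^2$-Morley sequences). Both discrepancies happen to run in directions that allow the hypothesis to deliver what we need.

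Concretely, I would begin by fixing $M \models T$, a formula $\phi(x,b)$ that $\ind^3$-Kim-divides over $M$, and an $\ind^1$-Morley sequence $(b_i)_{i<\omega}$ over $M$ with $b_0 = b$. The first step is to invoke \thref{implications-between-relative-kims}: since $\ind^3 \implies \ind^4$, the formula $\phi(x,b)$ also $\ind^4$-Kim-divides over $M$. The second step is to observe that, since $\ind^1 \implies \ind^2$, every instance $b_i \ind^1_M b_{<i}$ yields $b_i \ind^2_M b_{<i}$, and $M$-indiscernibility of $(b_i)_{i<\omega}$ carries over unchanged; hence $(b_i)_{i<\omega}$ is also an $\ind^2$-Morley sequence over $M$.

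At this point the hypothesis that $\ind^2$ satisfies GUWP with respect to $\ind^4$ applies directly to $\phi(x,b)$ and $(b_i)_{i<\omega}$, giving that $\{\phi(x,b_i) : i<\omega\}$ is inconsistent. This is exactly GUWP of $\ind^1$ w.r.t.\ $\ind^3$. I do not expect any genuine obstacle: the proof is a routine chaining of \thref{implications-between-relative-kims} with the definition of an $\ind$-Morley sequence, with no need to invoke quasi-strong finite character or any of the other structural axioms appearing later in the paper.
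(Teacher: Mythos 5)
Your proof is correct and follows essentially the same route as the paper: first transfer $\ind^3$-Kim-dividing to $\ind^4$-Kim-dividing via \thref{implications-between-relative-kims}, then use $\ind^1 \implies \ind^2$ to feed the configuration into the GUWP hypothesis for $\ind^2$ w.r.t.\ $\ind^4$. If anything, your direct observation that an $\ind^1$-Morley sequence over $M$ is an $\ind^2$-Morley sequence over $M$ is slightly cleaner than the paper's detour through $\ind^2$-Conant-dividing and \thref{universal-kim-preserves-implications}, since GUWP quantifies over \emph{all} $\ind^1$-Morley sequences and not only those arising as Morley sequences in global $M$-$\ind^1$-free types.
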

\begin{proof}
    Suppose that $\phi(x,b)$ $\ind^3$-Kim-divides over $M$. By \thref{implications-between-relative-kims}, $\phi(x,b)$ also $\ind^4$-Kim-divides over $M$. So, as $\ind^2$ satisfies GUWP w.r.t. $\ind^4$, it follows that $\phi(x,b)$ $\ind^2$-Conant-divides over $M$, and thus, by \thref{universal-kim-preserves-implications}, it also $\ind^1$-Conant-divides over $M$, as required.
\end{proof}
\begin{example}
Using \thref{examples-of-uwp}, the above lemmas allow us to recover several implications from the classification picture abstractly:
    \begin{enumerate}[(i)]
        \item If $T$ is simple, then it is $\NTP_2$. 
        \item If $T$ is simple, then it is $\NSOP_1$.
        \item If $T$ is $\NSOP_1$, then it is $\NBTP$. 
        \item If $T$ is $\NTP_2$, then it is $\NBTP$.
        \item If $T$ is $\NBTP$, then it is $\NCTP$. 
    \end{enumerate}
\end{example}
\begin{lemma} \thlabel{compatible-kim-ind}
    Let $\ind^1$ and $\ind^2$ be compatible independence relations satisfying full existence and monotonicity such that $\ind^1$ satisfies GUWP w.r.t. $\ind^2$, and let $M \models T$. Then $\phi(x,b)$ $\ind^2$-Kim-divides over $M$ iff it $\ind^1$-Conant-divides over $M$. In particular, $\ind^2$-Kim-independence and $\ind^1$-Conant-independence coincide.
\end{lemma}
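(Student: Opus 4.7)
The plan is to verify both directions of the equivalence and then immediately deduce the ``in particular'' claim, since $\ind$-Conant-forking and $\ind$-Kim-forking are defined as disjunctions of the respective dividing notions.

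For the forward direction, suppose $\phi(x,b)$ $\ind^2$-Kim-divides over $M$. To show that it $\ind^1$-Conant-divides over $M$, fix any $M$-$\ind^1$-free extension $q \supset \tp(b/M)$ and any $M$-indiscernible Morley sequence $(b_i)_{i<\omega}$ in $q$ over $M$ with $b_0 = b$. By \thref{morleys-around}, $(b_i)_{i<\omega}$ is $\ind^1$-independent, hence an $\ind^1$-Morley sequence over $M$. Now apply GUWP of $\ind^1$ with respect to $\ind^2$ directly to conclude that $\{\phi(x,b_i) : i < \omega\}$ is inconsistent, as required. Note that this direction does not actually use compatibility.

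For the reverse direction, suppose $\phi(x,b)$ $\ind^1$-Conant-divides over $M$. This is where compatibility enters: since $\ind^{1 \wedge 2}$ satisfies full existence by assumption, $\tp(b/M)$ admits a global $M$-$\ind^{1 \wedge 2}$-free extension $q$. By definition of $\ind^{1\wedge 2}$, any realisation of $q|_{MB}$ is independent from $B$ over $M$ in both $\ind^1$ and $\ind^2$, so $q$ is simultaneously $M$-$\ind^1$-free and $M$-$\ind^2$-free. Choose any $M$-indiscernible Morley sequence $(b_i)_{i<\omega}$ in $q$ with $b_0 = b$ (such a sequence exists by the usual Ramsey-plus-compactness argument). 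Because $q$ is $M$-$\ind^1$-free, the hypothesis that $\phi(x,b)$ $\ind^1$-Conant-divides over $M$ forces $\{\phi(x, b_i) : i < \omega\}$ to be inconsistent; since $q$ is also $M$-$\ind^2$-free, this very sequence witnesses that $\phi(x,b)$ $\ind^2$-Kim-divides over $M$.

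Finally, the ``in particular'' statement follows formally: $\ind^2$-Kim-forking (resp.\ $\ind^1$-Conant-forking) is by definition the existence of a disjunctive implication into formulas that $\ind^2$-Kim-divide (resp.\ $\ind^1$-Conant-divide); the equivalence of dividing thus transfers to equivalence of forking, which is the condition defining $\ind^2$-Kim-independence and $\ind^1$-Conant-independence respectively. The only real subtlety is the reverse direction, where one needs compatibility to produce a single extension simultaneously free for both relations; without this, there is no obvious way to turn the ``for all'' quantifier in $\ind^1$-Conant-dividing into a witness for the ``exists'' quantifier in $\ind^2$-Kim-dividing. Everything else is an unwinding of the definitions.
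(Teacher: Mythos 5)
Your proof is correct and follows essentially the same route as the paper: the forward direction is a direct application of GUWP (using \thref{morleys-around} to see that Morley sequences in $M$-$\ind^1$-free types are $\ind^1$-Morley), and the reverse direction uses compatibility to produce a single global $M$-$\ind^{1\wedge 2}$-free extension whose Morley sequence witnesses both notions at once. Your remarks that compatibility is only needed for the reverse direction and that the ``in particular'' clause follows formally from the definitions of forking match the paper's (implicit) treatment.
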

\begin{proof}
    ($\Rightarrow$) This follows directly from the definition of GUWP.

    ($\Leftarrow$) Assume that $\phi(x,b)$ $\ind^1$-Conant-divides over $M$. By compatibility, there is a global $M$-$\ind^{1 \wedge 2}$-free extension $q \supset \tp(b/M)$. Let $(b_i)_{i \in \omega}$ be an $M$-indiscernible Morley sequence in $q$ over $M$ with $b_0 = b$. Since $(b_i)_{i \in \omega}$ is $\ind^1$-Morley, by assumption $\{\phi(x, b_i) : i \in \omega\}$ is inconsistent. Since $(b_i)_{i \in \omega}$ is also $\ind^2$-Morley, this means that $\phi(x,b)$ $\ind^2$-Kim-divides over $M$. 
\end{proof} 
\begin{example}
    Using again \thref{examples-of-uwp}, the previous lemma recovers the following results from the literature:
    \begin{enumerate}[(i)]
    \item (\cite[Proposition 2.1]{kim1998forking}) If $T$ is simple, then $\phi(x,b)$ divides over $M$ iff it $\find$-Conant-divides over $M$.
    \item (\cite[Theorem 3.16]{kaplan2020kim}) If $T$ is $\NSOP_1$, then $\phi(x,b)$ Kim-divides over $M$ iff it Conant-divides over $M$. 
    \item (\cite[Lemma 3.14]{chernikov2012forking}) If $T$ is $\NTP_2$, then $\phi(x,b)$ divides over $M$ iff it $\istind$-Conant-divides over $M$.
    \item (\cite{kruckman2024new}) If $T$ is $\NBTP$, then $\phi(x,b)$ Kim-divides over $M$ iff it $\kstind$-Conant-divides over $M$. 
\end{enumerate}
\end{example}
\begin{proposition} \thlabel{rel-wit-implies-kim-fork-eq-kim-div}
    Let $\ind^1$ and $\ind^2$ be compatible independence relations satisfying full existence and monotonicity such that $\ind^1$ also satisfies GUWP w.r.t. $\ind^2$, and let $M \models T$. Then $\phi(x,b)$ $\ind^2$-Kim-forks over $M$ iff it $\ind^2$-Kim-divides over $M$.
\end{proposition}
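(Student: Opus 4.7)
The $(\Leftarrow)$ direction is immediate from the definitions. For the converse, the plan is to adapt the pigeonhole argument from \thref{univ-kim-forking-iff-univ-kim-dividing} to the present two-relation setting, with \thref{compatible-kim-ind} bridging $\ind^2$-Kim-dividing and $\ind^1$-Conant-dividing. Suppose $\phi(x,b)$ $\ind^2$-Kim-forks over $M$, so $\phi(x,b) \vdash \bigvee_{i<n} \psi_i(x, c_i)$ with each $\psi_i(x, c_i)$ $\ind^2$-Kim-dividing over $M$; by \thref{compatible-kim-ind}, each $\psi_i(x, c_i)$ also $\ind^1$-Conant-divides over $M$.

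The central construction is a single $M$-indiscernible Morley sequence handling all the parameters simultaneously. Writing $\bar c := c_0 \cdots c_{n-1}$, compatibility yields full existence for $\ind^{1\wedge 2}$, so $\tp(b\bar c/M)$ admits a global $M$-$\ind^{1\wedge 2}$-free extension $q^*$; I would then take an $M$-indiscernible Morley sequence $(b_j, \bar c_j)_{j<\omega}$ in $q^*$ over $M$ starting at $(b, \bar c)$, exactly as in the proof of \thref{compatible-kim-ind}. Using monotonicity of $\ind^1$, each projection $(c_{i,j})_{j<\omega}$ is an $M$-indiscernible Morley sequence in a global $M$-$\ind^1$-free extension of $\tp(c_i/M)$; so by the $\ind^1$-Conant-dividing of $\psi_i(x, c_i)$, the set $\{\psi_i(x, c_{i,j}) : j < \omega\}$ is inconsistent for each $i < n$.

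Since each $(b_j, \bar c_j)$ realises $\tp(b\bar c/M)$, an $M$-automorphism sending $(b, \bar c) \mapsto (b_j, \bar c_j)$ transports the disjunction to $\phi(x, b_j) \vdash \bigvee_i \psi_i(x, c_{i,j})$. If $\{\phi(x, b_j) : j < \omega\}$ were consistent, a realisation together with the usual pigeonhole would pick out some $i^*$ and an infinite $J \subseteq \omega$ giving $a \models \psi_{i^*}(x, c_{i^*, j})$ for all $j \in J$, contradicting the inconsistency just established. Hence $\{\phi(x, b_j) : j < \omega\}$ is inconsistent, and since $(b_j)_{j<\omega}$ is a Morley sequence in the $b$-projection of $q^*$, which inherits $M$-$\ind^{1\wedge 2}$-freeness (hence $M$-$\ind^2$-freeness) from $q^*$, we conclude that $\phi(x, b)$ $\ind^2$-Kim-divides over $M$. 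The main subtlety I expect is verifying that this last projection step genuinely preserves $\ind^2$-freeness; working with the joint tuple $(b, \bar c)$ from the outset is what lets us sidestep the ``change of parameters'' step which forced the extra left and right extension hypotheses in \thref{univ-kim-forking-iff-univ-kim-dividing}.
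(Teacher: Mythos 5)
Your argument is, in its core, the same as the paper's: the paper likewise passes to a global free extension of the joint type $\tp(b,c_0,\dots,c_{n-1}/M)$, takes a Morley sequence in it, uses \thref{compatible-kim-ind} to turn the $\ind^2$-Kim-dividing of each $\psi_i(x,c_i)$ into inconsistency of $\{\psi_i(x,c_{i,j}) : j<\omega\}$ along the projected sequences (which are $\ind^1$-Morley by monotonicity of $\ind^1$), and then pigeonholes. Up to and including the pigeonhole step your proof is correct and matches the paper's.

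The divergence is in the closing step, and the ``subtlety'' you flag there is a real gap in your version as written. To see that $q^*|_y$ is $M$-$\ind^2$-free, you take $d \models (q^*|_y)|_{MB}$, extend it to $d\bar c\,' \models q^*|_{MB}$, conclude $d\bar c\,' \ind^2_M B$, and then need to pass to the subtuple $d$ --- and that last step is exactly left monotonicity of $\ind^2$, which is not among the hypotheses (only $\ind^1$ is assumed monotone, and compatibility plus full existence give you nothing in this direction). The paper sidesteps this by doing the projection on the $\ind^1$ side, where monotonicity \emph{is} available: it takes $q$ to be $M$-$\ind^1$-free, observes that $(b_j)_j$ is Morley in the $M$-$\ind^1$-free type $q|_y$, concludes that $\phi(x,b)$ $\ind^1$-Conant-divides over $M$, and then applies \thref{compatible-kim-ind} a second time to convert this back into $\ind^2$-Kim-dividing. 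Rewriting your final paragraph along those lines (conclude on the $\ind^1$ side, then invoke \thref{compatible-kim-ind} once more) closes the gap and makes your proof coincide with the paper's.
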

\begin{proof}
    Suppose that $\phi(x,b)$ $\ind^2$-Kim-forks over $M$. By \thref{properties-of-kim-independence}(ii), we have that $\phi(x,b) \vdash \bigvee_{j < n} \psi_j(x, c_j)$ for some $\psi_j(x,c_j)$ for $j <n$ where each $\psi_j(x, c_j)$ $\ind^2$-Kim-divides over $M$. By compatibility, we can find some $\ind^{1 \wedge 2}$-Morley sequence $(b^i, c_0^i, \dots, c_{n-1}^i)_{i \in \omega}$ over $M$ starting at $(b, c_0, \dots, c_{n-1})$. 

    Assume, for contradiction, that $\phi(x,b)$ does not divide along $I_* := (b_i)_{i \in \omega}$. Then there is some $a \models \{\phi(x, b_i) : i \in \omega\}$, and thus, by the pigeonhole principle, we have $a \models \psi_j(x, c_{j,i})$ for some $j<n$ and infinitely many $i \in \omega$. But, by monotonicity and \thref{compatible-kim-ind}, $\{\psi_j(x, c_{j,i}) : i \in I\}$ is inconsistent for any infinite $I \subseteq \omega$. This gives us the required contradiction. Therefore, $\phi(x,b)$ $\ind^2$-Kim-divides over $M$.
\end{proof}
\begin{example}
    This allows us to recover abstractly the following results from the literature:
    \begin{enumerate}[(i)]
        \item (\cite[Corollary 3.22]{chernikov2012forking}) If $T$ is $\NTP_2$, then $\phi(x,b)$ forks over $M$ iff it divides over $M$. 
        \item (\cite[Corollary 4.9.1]{mutchnik2025nsop}) If $T$ is $\NCTP$, then $\phi(x,b)$ Kim-forks over $M$ iff it Kim-divides over $M$. 
    \end{enumerate}
    In particular, by \thref{left-extension-when-kim-fork-eq-kim-div}, $\find$ satisfies left extension over models in all $\NTP_2$ theories (cf. \cite{chernikov2012forking}), and $\kind$ also satisfies left extension in all $\text{NCTP}$ theories.
\end{example}
\begin{lemma} \thlabel{kim-div-and-univ-wit-for-weaker-ind-relations}
    Suppose that $\ind^1$, $\ind^2$ and $\ind^3$ are pairwise compatible independence relations satisfying full existence and monotonicity such that $\ind^1$ satisfies GUWP w.r.t. $\ind^3$ and $\ind^2 \implies \ind^3$, and let $M \models T$. Then $\phi(x,b)$ $\ind^2$-Kim-divides over $M$ iff it $\ind^3$-Kim-divides over $M$.
\end{lemma}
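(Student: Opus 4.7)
The plan is to exploit compatibility of $\ind^1$ and $\ind^3$ to construct a single global $M$-$\ind^{1\wedge 3}$-free extension $q \supset \tp(b/M)$, and show it serves as a universal witness for both notions of Kim-dividing via UWP. By compatibility, $\ind^{1\wedge 3}$ satisfies full existence, so such $q$ exists; in particular, $q$ is $\ind^1$-free (hence $\leq_3$-greatest by UWP of $\ind^1$ w.r.t.\ $\ind^3$), $\ind^3$-free, and---since $\ind^1 \implies \ind^2$---also $\ind^2$-free.

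For the forward direction, assume $\phi(x,b)$ $\ind^3$-Kim-divides over $M$, and pick a global $M$-$\ind^3$-free extension $q_3 \supset \tp(b/M)$ with $\phi(x,y) \in \cl(q_3)$. UWP yields $\cl(q_3) \subseteq \cl(q)$, so $\phi(x,y) \in \cl(q)$. Since $q$ is $\ind^2$-free, the inconsistency of $\{\phi(x,b_i)\}_{i<\omega}$ along any $M$-indiscernible Morley sequence in $q$ witnesses that $\phi(x,b)$ $\ind^2$-Kim-divides over $M$.

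For the backward direction, assume $\phi(x,b)$ $\ind^2$-Kim-divides over $M$; the aim is to conclude $\phi(x,y) \in \cl(q)$, from which $\ind^3$-Kim-dividing follows since $q$ is $\ind^3$-free. The key step---where I expect the proof to be most delicate---is transferring an arbitrary $\ind^2$-free witness $q_2$ of Kim-dividing to the specific $\ind^{1\wedge 3}$-free extension $q$. Since UWP with respect to $\ind^3$ does not on its own yield $\leq_2$-domination of all $\ind^2$-free extensions, the argument must carefully combine full existence of $\ind^{1\wedge 3}$, the inclusion $\ind^1\text{-free} \subseteq \ind^2\text{-free}$ provided by $\ind^1 \implies \ind^2$, and the UWP-based maximality of $q$, in a manner analogous to the compatibility-based argument of \thref{compatible-kim-ind}.
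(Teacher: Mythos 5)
Your forward direction is correct and is essentially the paper's own route, just unfolded: a global $\ind^1$-free extension is $\leq_3$-greatest by UWP, so it absorbs the class of the $\ind^3$-free witness, and it is $\ind^2$-free because $\ind^1 \implies \ind^2$; this is precisely what \thref{compatible-kim-ind} combined with \thref{implications-between-relative-kims} packages. (Note that for this direction you only need full existence of $\ind^1$, not the full $\ind^{1\wedge 3}$-free extension.)

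The backward direction, however, is a genuine gap: you describe where the difficulty lies but give no argument, and the ingredients you list cannot produce one. You are right that UWP w.r.t.\ $\ind^3$ says nothing about arbitrary $\ind^2$-free extensions, but neither compatibility nor full existence of $\ind^{1\wedge 3}$ lets you transfer $\phi(x,y) \in \cl(q_2)$ to your chosen $q$: the witness $q_2$ need not be $\ind^1$-free, so no maximality statement applies to it, and there is no ``analogue of \thref{compatible-kim-ind}'' waiting to be found here. What actually closes this direction --- and what the paper's one-line appeal to \thref{implications-between-relative-kims} is implicitly using --- is the implication $\ind^2 \implies \ind^3$, under which any $\ind^2$-free witness of Kim-dividing is automatically an $\ind^3$-free witness. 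That implication holds in all of the paper's intended applications (e.g.\ $\iind \implies \aind$ in the $\NTP_2$ example) even though it is not listed among the hypotheses of the lemma. Without it, your proposal has no route to the conclusion, and you should either supply that hypothesis explicitly or flag that the stated hypotheses do not suffice.
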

\begin{proof}
    By \thref{relative-dominating-between-relations}, $\ind^1$ also satisfies GUWP w.r.t. $\ind^2$. Hence, by \thref{compatible-kim-ind}, we have that $\phi(x,b)$ $\ind^2$-Kim-divides over $M$ iff it $\ind^1$-Conant-divides over $M$ iff it $\ind^3$-Kim-divides over $M$.
\end{proof}
\begin{example}
    For instance, this lemma recovers the fact that, if $T$ is $\NTP_2$, then $\phi(x,b)$ divides over $M$ iff it Kim-divides over $M$. 
\end{example}
We include the explicit proof of the following result as an illustration of how we can combine all that has been developed so far. The result itself has been previously generalised to NATP theories in \cite[Theorem 3.10]{kim2022some}, but the proof in that case is purely combinatorial. 
\begin{corollary}
    If $T$ is $\NBTP$, then $\phi(x,b)$ Kim-divides over $M$ iff it $\uind$-Kim-divides over $M$. 
\end{corollary}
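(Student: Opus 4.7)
The plan is to deduce this directly from \thref{kim-div-and-univ-wit-for-weaker-ind-relations} together with \thref{examples-of-uwp}(iv). Unravelling the terminology, ``Kim-dividing'' is $\iind$-Kim-dividing and ``coheir-dividing'' is $\uind$-Kim-dividing, so the statement reads that these coincide over models in $\NBTP$ theories.

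I would apply \thref{kim-div-and-univ-wit-for-weaker-ind-relations} with $\ind^1 := \ustind$, $\ind^2 := \uind$, and $\ind^3 := \iind$. The implication $\ustind \implies \uind$ is part of the definition of $\ustind$, cf.\ \thref{strictness-props}(i). Since $\ustind \implies \uind \implies \iind$, the conjunction $\ustind \wedge \iind$ collapses to $\ustind$, so the required compatibility reduces to $\ustind$ satisfying full existence in $\NBTP$ theories. For the UWP of $\ustind$ w.r.t.\ $\iind$: by \thref{strictness-props}(vi) we have $\ustind \implies \kstind$, and \thref{examples-of-uwp}(iv) asserts that $\kstind$ satisfies UWP w.r.t.\ $\iind$ in any $\NBTP$ theory; since both relations enjoy quasi-strong finite character, UWP coincides with GUWP, and \thref{relative-dominating-between-relations} transfers this property from $\kstind$ down to $\ustind$. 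With all three hypotheses verified, \thref{kim-div-and-univ-wit-for-weaker-ind-relations} immediately yields the equivalence.

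The main obstacle is establishing full existence of $\ustind$ in arbitrary $\NBTP$ theories. This is well-known in $\NSOP_1$ theories via the standard construction of strict invariant Morley sequences, and the same argument adapts here, using \thref{examples-of-uwp}(iv) as the substitute for the $\NSOP_1$ witnessing input. Everything else is a direct plug-in application of the machinery developed earlier in the section.
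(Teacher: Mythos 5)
Your main line of argument is the same as the paper's: the paper also passes from \thref{examples-of-uwp}(iv) via $\ustind \implies \kstind$ and \thref{relative-dominating-between-relations} to GUWP of $\ustind$ w.r.t.\ $\iind$, and then concludes exactly as in \thref{kim-div-and-univ-wit-for-weaker-ind-relations} (which is just the packaging of \thref{compatible-kim-ind} and \thref{implications-between-relative-kims} that the paper applies directly). The trivial direction, coheir-dividing implies Kim-dividing, is \thref{implications-between-relative-kims} with $\uind \implies \iind$, which your instantiation also delivers. So structurally the proposal is fine.

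The weak point is precisely the step you flag as the main obstacle: full existence of $\ustind$. Your proposed justification --- ``the standard $\NSOP_1$ construction of strict invariant Morley sequences adapts, with \thref{examples-of-uwp}(iv) as the substitute for the $\NSOP_1$ witnessing input'' --- does not go through as stated. The $\NSOP_1$ argument shows that a coheir extension is automatically Kim-strict because $\kind$ is symmetric there (a realisation $a \models q|_{Mbc}$ satisfies $a \kind_M bc$, hence $bc \kind_M a$); under $\NBTP$ alone, $\kind$ need not be symmetric, and the New Kim's Lemma statement (Kim-dividing formulas divide along all $\kstind$-Morley sequences) says nothing about realisations of a given coheir being Kim-independent \emph{from the left}, which is what Kim-strictness requires. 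The fact you need is true, but for a different reason and with no $\NBTP$ hypothesis at all: the paper cites \cite[Corollary 3.16]{kim2022some}, and its own \thref{existence-of-kim-strict-free-ext} shows that Kim-strict $\ind$ has full existence for any $\ind$ with full existence, monotonicity, and quasi-strong finite character (so in particular $\ustind$ and $\kstind$, in every theory), the engine being quasi-strong finite character together with the Kruckman--Ramsey result that Kim-forking implies quasi-dividing --- not any witnessing property. Replace your sketch of that step by this citation and the rest of your argument is a correct rendering of the paper's proof.
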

\begin{proof}
    Since $\kstind$ satisfies GUWP w.r.t. $\iind$ and $\ustind \implies \kstind$, by \thref{relative-dominating-between-relations} it follows that $\ustind$ also satisfies GUWP w.r.t. $\iind$. (Note that $\ustind$ has full existence by \cite[Corollary 3.16]{kim2022some}.) Hence, by \thref{compatible-kim-ind}, if $\phi(x,b)$ Kim-divides over $M$, then it $\ustind$-Kim-divides over $M$, and since $\ustind \implies \uind$, it follows that $\phi(x,b)$ $\uind$-Kim-divides over $M$.
\end{proof}
Finally, let us establish a connection between witnessing and the notion of \textit{weight} (cf. \cite{onshuus2011dp} for a treatment of this concept outside of stability). The following proof is based on \cite[Theorem 4.9]{chernikov2014theories}:
\begin{lemma}\thlabel{guwp-and-weight}
    Let $\ind^1$ and $\ind^2$ be independence relations satisfying full existence and monotonicity. The following are equivalent:
    \begin{enumerate}[(i)]
        \item $\ind^1$ satisfies GUWP w.r.t. $\ind^2$. 
        \item For every $M \models T$, every $\ind^1$-independent sequence $(a_i)_{i < \size{T}^+}$ over $M$, and any finite tuple $b$, there is some $i < \size{T}^+$ such that $\tp(b/Ma_i)$ does not $\ind^2$-Kim-divide over $M$.
    \end{enumerate}
\end{lemma}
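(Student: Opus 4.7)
The plan is to prove both implications by contraposition, with the forward direction (i) $\Rightarrow$ (ii) relying on a Ramsey-style extraction of an $\ind^1$-Morley sequence out of a long $\ind^1$-independent sequence, following the template of the classical dividing-and-weight argument of \cite[Theorem 4.9]{chernikov2014theories}.

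For (i) $\Rightarrow$ (ii), I would argue as follows. Assume (ii) fails, so there are $M\models T$, an $\ind^1$-independent sequence $(a_i)_{i<\size{T}^+}$ over $M$, and a finite tuple $b$ such that $\tp(b/Ma_i)$ $\ind^2$-Kim-divides over $M$ for every $i$. For each $i$ pick a formula $\phi_i(x, a_i)\in\tp(b/Ma_i)$ that $\ind^2$-Kim-divides over $M$. Since there are at most $\size{T}$ many $\mathcal{L}_M$-formulas up to choice of parameters, the pigeonhole principle yields a cofinal $S\subseteq\size{T}^+$ on which $\phi_i=\phi$ is a single formula, and after reindexing I may assume $\phi_i=\phi$ throughout. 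Now use Ramsey and compactness to extract an $M$-indiscernible sequence $(b_j)_{j<\omega}$ whose finite initial segments realise the EM-type over $M$ of some increasing subtuples of $(a_i)_{i<\size{T}^+}$; the resulting sequence is still $\ind^1$-independent (the type of each finite segment $b_0\dots b_{n-1}$ matches that of $a_{i_0}\dots a_{i_{n-1}}$ for some $i_0<\dots<i_{n-1}$, so independence at each stage is transferred), hence $\ind^1$-Morley over $M$. Moreover, because $b\models\phi(x,a_{i_j})$ for every index involved, compactness supplies $b'\equiv_M b$ with $b'\models\{\phi(x,b_j):j<\omega\}$. But GUWP of $\ind^1$ w.r.t.\ $\ind^2$ forces $\{\phi(x,b_j):j<\omega\}$ to be inconsistent, a contradiction.

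For (ii) $\Rightarrow$ (i), I would again contrapose. Suppose GUWP fails, witnessed by some $\phi(x,c)$ that $\ind^2$-Kim-divides over $M$ and some $\ind^1$-Morley sequence $(c_i)_{i<\omega}$ with $c_0=c$ such that $\{\phi(x,c_i):i<\omega\}$ is consistent. Using the type of this sequence over $M$, compactness extends it to an $M$-indiscernible, $\ind^1$-independent sequence $(c_i)_{i<\size{T}^+}$ along which $\phi$ remains consistent; pick $b$ realising $\{\phi(x,c_i):i<\size{T}^+\}$, which is a finite tuple since $x$ is finite. For each $i$, the formula $\phi(x,c_i)\in\tp(b/Mc_i)$, and since $c_i\equiv_M c$, the $\Aut(\M/M)$-invariance of $\ind^2$-Kim-dividing (given by \thref{ind-kim-ind-is-independence-relation}) ensures $\phi(x,c_i)$ also $\ind^2$-Kim-divides over $M$. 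Hence $\tp(b/Mc_i)$ $\ind^2$-Kim-divides over $M$ for every $i<\size{T}^+$, contradicting (ii).

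The main obstacle is in the (i) $\Rightarrow$ (ii) direction, specifically in guaranteeing that the indiscernible sequence extracted from the $\ind^1$-independent one remains $\ind^1$-independent. The standard route goes through right monotonicity of $\ind^1$ applied to subtuples of the form $(a_{i_0},\dots,a_{i_{n-1}})\subseteq a_{<i_n}$. Without any finite-character hypotheses this is the delicate point, and I expect the argument to proceed by transferring independence segment-by-segment along the EM-type identification, mirroring the Chernikov--Kaplan argument; the remainder of the proof (pigeonholing formulas, compactness with $\phi$, and the contradiction against GUWP) is essentially routine once the Morley extraction is secured.
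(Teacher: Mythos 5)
Your proposal is correct and follows essentially the same route as the paper's proof: pigeonhole on the Kim-dividing formula plus extraction of an indiscernible (hence $\ind^1$-Morley) sequence for (i) $\Rightarrow$ (ii), and stretching the Morley sequence to length $\size{T}^+$ and realising $\{\phi(x,c_i)\}$ for (ii) $\Rightarrow$ (i). The "delicate point" you flag — transferring $\ind^1$-independence to the extracted sequence via right monotonicity and invariance — is present in the paper's argument as well, where it is passed over without comment.
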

\begin{proof}
    ((i) $\Rightarrow$ (ii)) Suppose that there exist $M \models T$, $b$, and $(a_i)_{i < \size{T}^+}$ such that $(a_i)_{i<\size{T}^+}$ is $\ind^1$-independent over $M$ but $\tp(b/Ma_i)$ $\ind^2$-Kim-divides over $M$ for all $i < \size{T}^+$. By pigeonhole, we may assume that the same formula $\phi(x,y)$ witnesses $\ind^2$-Kim-dividing for all $i < \size{T}^+$. By Erd\H{o}s-Rado and compactness, we can find an $Mb$-indiscernible sequence $(a_i')_{i<\omega}$ based on $(a_i)_{i<\size{T}^+}$. In particular, by monotonicity, $(a_i')_{i<\omega}$ is $\ind^1$-Morley over $M$ and $\{\phi(x,a'_i) : i < \omega\}$ is consistent, but $\phi(x,a'_0)$ $\ind^2$-Kim-divides over $M$. So $\ind^1$ does not satisfy GUWP w.r.t. $\ind^2$. 

    ((ii) $\Rightarrow$ (i)) Suppose that $\ind^1$ does not satisfy GUWP w.r.t. $\ind^2$. So there is a formula $\phi(x,a)$ and an $\ind^1$-Morley sequence $(a_i)_{i<\omega}$ over $M$ with $a_0 = a$ such that $\phi(x,a)$ $\ind^2$-Kim-divides over $M$ but $\{\phi(x,a_i) : i < \omega\}$ is consistent. Stretch the sequence $(a_i)_{i<\omega}$ to an $\ind^1$-independent sequence of length $\size{T}^+$. Take $b \models \{\phi(x,a_i) : i < \size{T}^+\}$. Then clearly $\tp(b/Ma_i)$ $\ind^2$-Kim-divides over $M$ for all $i < \size{T}^+$, as it contains $\phi(x,a_i)$ and $a_i \equiv_M a$. In particular, (ii) fails.
\end{proof}
Let us note for the benefit of the reader that an application of the results on witnessing from this section is given in \S\ref{sec:dichotomies} which does not depend on the contents of \S\S\ref{sec:chain-local-character}-\ref{sec:independence-thm}.
\section{Chain local character} \label{sec:chain-local-character}
The goal for the upcoming sections is to use the technology of witnessing developed in the previous section to generalise the proofs of traditional properties of Kim- and Conant-independence in tame contexts, such as symmetry and transitivity, to their relativised versions. As we will see, although the original proofs tend to be done in concrete contexts (such as $\text{NSOP}_1$), for several traditional properties they can be carried out under minimal assumptions on $\ind$ as long as some form of witnessing is in place. 

Before doing so, we need to introduce a slight variation of the local character property from the introduction. We use the notion as in \cite{dobrowolski2022kim}, although it originates in \cite{kaplan2019local}, and we use the name from \cite{delbee2023axiomatic}. Recall that we say $(M_i)_{i<\kappa}$ is a \textbf{continuous chain} of models if $M_i \models T$ for all $i$, $M_i \subseteq M_j$ for all $i \leq j$, and $M_\alpha = \bigcup_{\beta < \alpha} M_\beta$ for every limit $\alpha$.
\begin{enumerate}
    \item[($*$)] \textit{Chain local character}: For all finite $A$, there is a regular cardinal $\kappa$ such that, for all continuous chains of models $(M_i)_{i < \kappa}$ of $T$ such that $\size{M_i} < \kappa$ for all $i < \kappa$, we have $A \ind_{M_i} M$ for some $i < \kappa$, where $M := \bigcup_{i < \kappa} M_i$. 
\end{enumerate}
The following is folklore:
\begin{fact}\thlabel{local-character-over-models}
    Let $\ind$ be an independence relation. If $\ind$ satisfies local character and right base monotonicity, then it also satisfies chain local character. 
\end{fact}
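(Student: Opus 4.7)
The plan is to use local character to produce a small base $C$ inside $M=\bigcup_{i<\kappa}M_i$ with $A\ind_CM$, then use the regularity of $\kappa$ to swallow $C$ into some $M_i$, and finally apply right base monotonicity to replace the base $C$ with $M_i$.

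In detail: fix a finite $A$ and let $\kappa_0:=\kappa(A)$ be given by local character. Choose a regular cardinal $\kappa\geq\kappa_0$ (e.g.\ $\kappa=\kappa_0^+$). Given a continuous chain $(M_i)_{i<\kappa}$ of models of $T$ with $\size{M_i}<\kappa$ for all $i<\kappa$, set $M:=\bigcup_{i<\kappa}M_i$. Applying local character with $B=M$ produces a (sub)model $C\subseteq M$ with $\size{C}<\kappa_0\leq\kappa$ and $A\ind_C M$. For each $c\in C$, let $\beta(c):=\min\{i<\kappa:c\in M_i\}$; since $\size{C}<\kappa$ and $\kappa$ is regular, $\beta:=\sup_{c\in C}\beta(c)<\kappa$, so $C\subseteq M_\beta$. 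Because $C$ and $M_\beta$ are both elementary substructures of $\M$, the inclusion $C\subseteq M_\beta$ upgrades to $C\prec M_\beta$. Right base monotonicity applied to $C\prec M_\beta\subseteq M$ and $A\ind_CM$ then yields $A\ind_{M_\beta}M$, witnessing chain local character.

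There is no real obstacle; the only point of care is that the set $C$ obtained from local character must be usable as a base for $\ind$ and must sit as an elementary substructure of $M_\beta$, so that right base monotonicity (which is phrased in terms of $M\prec N$) applies. This fits the convention of the paper that $\ind$ is defined over models; if one started from a version of local character producing an arbitrary set $C_0$ of size $<\kappa_0$, one would first absorb $C_0$ into a small elementary submodel $C\prec M$ with $\size{C}<\kappa$ (which is possible whenever $\kappa\geq(\size{T}+\aleph_0)^+$) before running the argument above.
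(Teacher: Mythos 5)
The paper states this Fact as folklore and gives no proof; your main argument (apply local character with $B=M$, use regularity of $\kappa$ to find $\beta<\kappa$ with $C\subseteq M_\beta$, then pass from the base $C$ to the base $M_\beta$ by right base monotonicity and conclude $A\ind_{M_\beta}M$) is exactly the intended folklore argument, and it is correct under any reading of the hypotheses that makes the statement true.

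The one step I would push back on is your closing ``point of care'' and the fix you propose for it. Local character as defined in the paper produces an arbitrary set $C\subseteq B$, not a submodel, so the ``(sub)model'' in your second paragraph is not given; and the proposed repair --- absorb $C_0$ into a small elementary submodel $C\prec M$ and then run the argument with $C$ --- does not close this gap: from $A\ind_{C_0}M$ you cannot infer $A\ind_{C}M$ for a larger base $C\supseteq C_0$, since enlarging the base is itself an instance of base monotonicity, and in the paper's formulation right base monotonicity requires the smaller base to be a model, which $C_0$ need not be. So the absorption step is circular. The honest resolution is that the Fact only makes sense (and is the usual folklore statement) when right base monotonicity is read with an arbitrary smaller base, i.e.\ $C\subseteq N\subseteq B$ and $A\ind_C B$ imply $A\ind_N B$; under that reading no absorption and no elementarity $C\prec M_\beta$ are needed at all --- apply base monotonicity directly with smaller base the set $C$ given by local character and larger base $M_\beta$, exactly as in your main paragraph. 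In short, keep the second paragraph (with $C$ just a set) and drop the final one.
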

We also have the following result for $\hind$, defined in \thref{hind}:
\begin{fact}[\protect{cf., \cite[Theorem 3.3.7]{delbee2023axiomatic}}]\thlabel{hind-has-local-character}
    $\hind$ satisfies chain local character. 
\end{fact}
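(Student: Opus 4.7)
The plan is to unpack $\hind$ into a concrete condition on realizability of formulas, and then to run a short regular-cardinal iteration.

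Using $\hind = \uind^{\textnormal{opp}}$, the condition $A \hind_{M_i} M$ translates to $M \uind_{M_i} A$, i.e.\ $\tp(M/M_i A)$ extends to a global type finitely satisfiable over $M_i$. By a standard compactness argument (every finitely satisfiable partial type extends to a finitely satisfiable complete one), this is equivalent to $\tp(M/M_i A)$ itself being finitely satisfiable in $M_i$. Enumerating the finite $A$ as $a$, this reduces concretely to: for every $\phi(x, y, \bar z) \in \mathcal{L}$, every $\bar m \in M_i$, and every tuple $\bar b \in M$ with $\models \phi(a, \bar b, \bar m)$, there exists $\bar b' \in M_i$ with $\models \phi(a, \bar b', \bar m)$.

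Fixing finite $a$ and taking $\kappa := |T|^+$, which is regular with $\kappa > |\mathcal{L}|$, consider any continuous chain $(M_i)_{i < \kappa}$ with $|M_i| < \kappa$ and $M := \bigcup_i M_i$. For each $i$ set
\begin{equation*}
W_i := \{(\phi, \bar m) : \phi(x, y, \bar z) \in \mathcal{L},\; \bar m \in M_i,\; \phi(a, y, \bar m) \text{ realized in } M\},
\end{equation*}
and for $(\phi, \bar m) \in W_i$ let $g(\phi, \bar m) < \kappa$ be the least index at which $\phi(a, y, \bar m)$ is realized in the chain. Since $|W_i| \leq |\mathcal{L}| \cdot |M_i|^{<\omega} < \kappa$ and $\kappa$ is regular, $f(i) := \sup\{g(\phi, \bar m) : (\phi, \bar m) \in W_i\} < \kappa$.

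The main step is then a short $\omega$-iteration: set $i_0 := 0$, $i_{n+1} := \max(i_n + 1, f(i_n))$, and $i^* := \sup_{n<\omega} i_n$. Regularity of $\kappa$ forces $i^* < \kappa$, and continuity of the chain gives $M_{i^*} = \bigcup_n M_{i_n}$. For any $(\phi, \bar m) \in W_{i^*}$, the tuple $\bar m$ is finite and so lies in some $M_{i_n}$, whence $(\phi, \bar m) \in W_{i_n}$ and $\phi(a, y, \bar m)$ is realized in $M_{g(\phi, \bar m)} \subseteq M_{f(i_n)} \subseteq M_{i_{n+1}} \subseteq M_{i^*}$. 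This is exactly the required condition $A \hind_{M_{i^*}} M$. I expect the only (very mild) obstacle to be the equivalence in step one between the global and partial-type formulations of finite satisfiability; the rest is pure cardinal arithmetic inside a regular cardinal.
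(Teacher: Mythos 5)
Your proof is correct, and it takes a genuinely different (more self-contained) route than the paper. The paper treats this as folklore and derives it from two general facts: $\hind$ satisfies local character and right base monotonicity, and any relation with those two properties satisfies chain local character (\thref{local-character-over-models}). You instead unwind the definition directly: $A \hind_{M_i} M$ iff $\tp(M/M_iA)$ is finitely satisfiable in $M_i$ (the reduction from the global-extension formulation to the partial-type one is indeed just the standard fact that a finitely satisfiable partial type extends to a complete global one), and then you run an $\omega$-step closing-off argument inside the regular cardinal $\kappa = \size{T}^+$ to find a stage $M_{i^*}$ closed under realizing all relevant formulas. All the cardinal estimates check out: $\size{W_i} \leq \size{T} < \kappa$, each $f(i) < \kappa$ by regularity, $i^*$ is a limit ordinal below $\kappa$, and continuity of the chain lets you pull every finite parameter tuple $\bar m \in M_{i^*}$ down to some $M_{i_n}$. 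What your approach buys is independence from the auxiliary properties of $\hind$ (in particular, you never need to formulate or verify right base monotonicity for $\hind$, whose statement in the paper is only given for model bases and would need the version over arbitrary sets to run the folklore argument); what it costs is that the computation is specific to $\hind$ rather than an instance of a reusable general implication.
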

More generally, it satisfies local character and right base monotonicity, among other properties, so a quick application of \thref{local-character-over-models} yields this fact.
\begin{lemma}\thlabel{weak-symmetry}
    Let $\ind$ be an independence relation satisfying full existence, monotonicity, and right extension. If $a \ind_M b$, then $\tp(b/Ma)$ does not $\ind$-Conant-divide over $M$. 
\end{lemma}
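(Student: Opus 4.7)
The plan is to reduce the problem to constructing a single $\ind$-Morley sequence $(a_i)_{i<\omega}$ over $M$ with $a_0 = a$ and $a_i \equiv_{Mb} a$ for every $i$. Given such a sequence, for any $\phi(x,a) \in \tp(b/Ma)$ we automatically have $\models \phi(b, a_i)$ for all $i$, so $b$ itself witnesses the consistency of $\{\phi(x, a_i) : i < \omega\}$. \thref{collapse-of-sequences-and-types}, whose hypotheses match ours exactly, then guarantees that $(a_i)_{i<\omega}$ arises as a Morley sequence in some global $M$-$\ind$-free extension of $\tp(a/M)$, so $\phi(x,a)$ does not $\ind$-Conant-divide over $M$; since $\phi$ was arbitrary, neither does $\tp(b/Ma)$.

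First I would build inductively a sequence $(c_i)_{i<\omega}$ with $c_0 = a$, $c_i \equiv_{Mb} a$, and $c_i \ind_M bc_{<i}$, using right extension at each stage applied to $a \ind_M b$ with the extension $b \subseteq bc_{<i}$; monotonicity then gives $c_i \ind_M c_{<i}$ as well. I would then extract an $Mb$-indiscernible sequence $(a_i)_{i<\omega}$ from $(c_i)$ by Ramsey plus compactness, first refining to a cofinal subsequence so that, for each $k$, the initial $k$-tuples of $(c_j)$ all realise a common $M$-type $r_k$. An automorphism fixing $Mb$ lets us assume $a_0 = a$, and $Mb$-indiscernibility gives $a_i \equiv_{Mb} a$ throughout.

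The main obstacle, and the step where quasi-strong finite character is essential, is ensuring that the extracted sequence remains $\ind$-independent over $M$. The idea is that the partial type $\Sigma_{\tp(a/M), r_k}(x_k; x_0, \dots, x_{k-1})$ arising from quasi-strong finite character pins down $\ind$-independence between tuples of those fixed $M$-types, is satisfied cofinally in $(c_j)$ by construction, and can therefore be placed into the EM-type of $(a_i)$, forcing $a_i \ind_M a_{<i}$. This is precisely where quasi-strong finite character does the work: $\ind$-independence is captured by a genuine partial type, which survives Ramsey extraction in a way that a formula-level condition would not.
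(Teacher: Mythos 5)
Your overall reduction is sound and is essentially the paper's: produce a sequence of $Mb$-conjugates of $a$ that is $\ind$-independent over $M$ via right extension applied to $a \ind_M b$, extract an $Mb$-indiscernible sequence, and then use \thref{collapse-of-sequences-and-types} to see that this $\ind$-Morley sequence is Morley in some global $M$-$\ind$-free extension of $\tp(a/M)$, so that no formula of $\tp(b/Ma)$ can $\ind$-Conant-divide over $M$. The gap is in the extraction step. You start from a countable sequence $(c_i)_{i<\omega}$ and propose first to refine to a subsequence on which, for each $k$, the relevant $k$-tuples all realise one complete $M$-type $r_k$. Over an infinite model $M$ the increasing $k$-tuples of a countable sequence may realise infinitely many distinct complete types over $M$, so no pigeonhole or diagonal refinement yields such a subsequence; asking for it for all $k$ simultaneously is precisely asking for an $M$-indiscernible subsequence of a countable sequence, which need not exist (and if you only mean initial segments, that is vacuous and does not help, since the tuples witnessing finite fragments of the EM-type are arbitrary increasing tuples). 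Without this, the quasi-strong finite character argument does not close: the type $r_k$ of an initial segment of the Ramsey-extracted sequence is only a ``limit'' of types realised in $(c_j)$ and need not be realised by any tuple of $(c_j)$, so you cannot argue that the witnessing tuples satisfy $\Sigma_{\tp(a/M),\,r_k}$, and hence cannot force $a_k \ind_M a_{<k}$ on the extracted sequence.

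The repair is what the paper does: use right extension transfinitely to build a sequence $(c_i)_{i<\kappa}$ with $c_i \equiv_{Mb} a$ and $c_i \ind_M bc_{<i}$ for $\kappa$ sufficiently large, and extract with Erd\H{o}s--Rado and compactness. Then every finite tuple of the extracted $Mb$-indiscernible sequence realises the \emph{complete} type over $Mb$ of some increasing tuple of the original sequence, so $\ind$-independence transfers by invariance and monotonicity alone; no definability of $\ind$ is needed at this stage. In particular, your claim that quasi-strong finite character is essential for preserving independence under extraction misplaces its role in this lemma: it is needed only for the final appeal to \thref{collapse-of-sequences-and-types}, which you do make correctly.
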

\begin{proof}
    We adapt the proof from \cite[Proposition 3.22]{kaplan2020kim}. So suppose that $a \ind_M b$. By right extension, we can find an arbitrarily long sequence $(a_i)_{i < \kappa}$ such that $a_i \ind_M ba_{<i}$ for all $i < \kappa$. Applying Erd\H{o}s-Rado, compactness, monotonicity and an automorphism, we obtain an $Mb$-indiscernible $\ind$-Morley sequence $(a'_i)_{i<\omega}$ with $a'_0 = a$. Therefore, $\tp(b/Ma)$ does not $\ind$-Conant-divide over $M$. 
\end{proof}
\begin{remark}
    The proof of \thref{weak-symmetry} shows abstractly that, for any independence relation $\ind$ satisfying full existence and monotonicity, we have $\ind^* \implies (\ind^{\text{opp}})^\times$. In particular, if $\ind$ satisfies right extension and symmetry, we have $\ind \implies \ind^\times$.
\end{remark}
\begin{proposition} \thlabel{local-character-and-coheirs}
    Let $\ind$ be an independence relation satisfying full existence, monotonicity, and right extension. Suppose, in addition, that $\uind \implies \ind$. Then $\ind$-Conant-independence satisfies chain local character.  
\end{proposition}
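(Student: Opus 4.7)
The plan is to leverage the chain local character of heir-independence (Fact~\thref{hind-has-local-character}) and then route the argument through \emph{coheir}-Conant-independence, invoking the hypothesis $\uind \implies \ind$ only at the very last step. This routing is the key conceptual move, since it lets us exploit the stronger axiomatic behaviour of $\uind$ (in particular its left extension) without assuming anything of the kind for $\ind$.

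In more detail: fix a finite tuple $a$ and let $\kappa$ be the regular cardinal provided by chain local character for $\hind$ applied to $a$. Consider a continuous chain $(M_i)_{i<\kappa}$ of models with $|M_i|<\kappa$, and set $M:=\bigcup_{i<\kappa} M_i$. By Fact~\thref{hind-has-local-character} there is some $i^*<\kappa$ with $a \hind_{M_{i^*}} M$, which by $\hind=(\uind)^{\textnormal{opp}}$ unpacks to $M \uind_{M_{i^*}} a$.

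Next, I apply weak symmetry (Lemma~\thref{weak-symmetry}) with the ambient independence relation taken to be $\uind$ rather than $\ind$: this is legitimate because $\uind$ itself satisfies full existence, monotonicity, strong (hence quasi-strong) finite character, and right extension, as recorded in Example~\thref{ind-relations}. Applied to $M \uind_{M_{i^*}} a$, this gives that $\tp(a/M_{i^*}M)$ does not $\uind$-Conant-divide over $M_{i^*}$. Since $\uind$ also has left extension, Lemma~\thref{univ-kim-forking-iff-univ-kim-dividing} applied to $\ind=\uind$ shows that $\uind$-Conant-forking coincides with $\uind$-Conant-dividing; hence no formula in $\tp(a/M_{i^*}M)$ $\uind$-Conant-forks over $M_{i^*}$, i.e., $a$ is coheir-Conant-independent from $M$ over $M_{i^*}$. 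Finally, the hypothesis $\uind \implies \ind$ and Lemma~\thref{universal-kim-preserves-implications} yield that $\uind$-Conant-independence implies $\ind$-Conant-independence, so $a$ is $\ind$-Conant-independent from $M$ over $M_{i^*}$, as required.

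The main obstacle one might naively worry about is the gap between Conant-non-dividing (which is all weak symmetry gives directly) and Conant-non-forking (which is what the definition of Conant-independence demands). My plan sidesteps this by working at the level of $\uind$-Conant throughout, where left extension of $\uind$ collapses forking and dividing via Lemma~\thref{univ-kim-forking-iff-univ-kim-dividing}, and by transferring to $\ind$-Conant only after the forking/dividing distinction has already been resolved. In this way no left extension assumption on $\ind$ itself is needed, and the hypothesis $\uind\implies\ind$ does exactly the job of carrying the conclusion across.
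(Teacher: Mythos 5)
Your proof is correct, and it takes a genuinely different (though closely related) route from the paper's. The paper also combines chain local character of $\hind$ (\thref{hind-has-local-character}) with \thref{weak-symmetry}, but it invokes the hypothesis $\uind \implies \ind$ at the very start: from $a \hind_M b$ it passes to $b \uind_M a$ and then to $b \ind_M a$, and applies \thref{weak-symmetry} to $\ind$ itself --- which is exactly where the assumptions of monotonicity, quasi-strong finite character and right extension on $\ind$ are consumed --- and it closes the gap between non-$\ind$-Conant-dividing and $\ind$-Conant-independence by noting that $\hind$ has right extension and applying \thref{ind-star-props}(ii). You instead keep the entire argument at the level of $\uind$: weak symmetry for $\uind$, the collapse of $\uind$-Conant-forking and $\uind$-Conant-dividing via left extension of $\uind$ and \thref{univ-kim-forking-iff-univ-kim-dividing}, and only at the end the transfer to $\ind$-Conant-independence by \thref{universal-kim-preserves-implications}. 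Both arguments are sound. What your version buys is that the hypotheses on $\ind$ listed in the statement are never actually used: you in effect prove the stronger assertion that $\hind$ implies coheir-Conant-independence, and hence $\ind$-Conant-independence for \emph{every} $\ind$ with $\uind \implies \ind$ (full existence of $\ind$ then also comes for free). Under your proof, the axioms on $\ind$ in the statement would only be needed later, in \thref{local-character-for-universal-kim-ind}, to derive the implication $\uind \implies \ind$ itself. The paper's version, by contrast, exercises the hypotheses as stated and avoids routing through the forking/dividing equivalence for $\uind$-Conant-independence, handling that gap once and for all with the $*$-operation.
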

\begin{proof}
    Let us first note that, if $a \hind_M b$, then $b \uind_M a$ by \thref{hind}, so $b \ind_M a$ by assumption, hence $\tp(a/Mb)$ does not $\ind$-Conant-divide over $M$ by \thref{weak-symmetry}. So $\hind$ implies non-$\ind$-Conant-dividing. Since $\hind$ satisfies right extension, by \thref{ind-star-props}(ii) it follows that $a$ is $\ind$-Conant-independent from $b$ over $M$. 

    Now, let $\kappa$, $(M_i)_{i<\kappa}$, $M$ and $a$ be as in the statement of chain local character. By \thref{hind-has-local-character}, there is $i < \kappa$ such that $a \ind_{M_i} M$. Hence, by the previous paragraph, $a$ is $\ind$-Conant-independent from $M$ over $M_i$. Therefore, $\ind$-Conant-independence satisfies chain local character. 
\end{proof}
It is well-known that we can remove the assumption of $\uind \implies \ind$ by using abstract properties of independence relations. Using \cite[Lemma 9.5]{dobrowolski2022kim}, we obtain:
\begin{proposition}\thlabel{local-character-for-universal-kim-ind}
    Let $\ind$ be an independence relation that satisfies full existence, monotonicity, strong finite character, and right extension. Then $\ind$-Conant-independence satisfies chain local character.
\end{proposition}
\begin{example}
    In any theory, Conant-independence satisfies chain local character. 
\end{example}
\begin{remark}
    In a more abstract fashion, we can use \thref{sfc-properties,sfc-preserves-right-ext} to rephrase the above result as follows: if $\ind$ satisfies existence, monotonicity, and right extension, then $\ind^\sfc$-Conant-independence satisfies chain local character.
\end{remark}
While chain local character, as we have seen above, is a pervasive property among relative Conant-independence, it is less common for relative Kim-independence. A way in which we can obtain it is via a witnessing property:
\begin{lemma}\thlabel{local-character-if-witnessing}
    Let $\ind^1$ and $\ind^2$ be compatible independence relations satisfying full existence and monotonicity such that $\ind^1$ also satisfies strong finite character, right extension, and GUWP w.r.t. $\ind^2$. Then $\ind^2$-Kim-independence satisfies chain local character. 
\end{lemma}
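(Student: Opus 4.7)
The plan is to obtain this lemma as an immediate consequence of the two main tools already developed in the preceding sections. First, since $\ind^1$ and $\ind^2$ are compatible and $\ind^1$ satisfies GUWP w.r.t.\ $\ind^2$, I would invoke \thref{compatible-kim-ind} to conclude that $\ind^2$-Kim-independence and $\ind^1$-Conant-independence coincide as $\Aut(\M)$-invariant ternary relations on subsets of $\M$. This reduces the problem to establishing chain local character for $\ind^1$-Conant-independence.

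Second, the hypotheses imposed on $\ind^1$ in the statement, namely full existence, monotonicity, strong finite character, and right extension, are exactly those required to apply \thref{local-character-for-universal-kim-ind}, which yields chain local character for $\ind^1$-Conant-independence directly. Since chain local character is a property of the underlying ternary relation, it transfers along the identification given by \thref{compatible-kim-ind} to $\ind^2$-Kim-independence.

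I do not anticipate any obstacle here: the lemma is essentially a packaging statement whose content is that the abstract machinery built up in \S\S3--4 together with the chain local character result of \S5 suffices to pass chain local character from the $\ind^1$-Conant side to the $\ind^2$-Kim side whenever a witnessing relation is available. All the genuine difficulty has already been absorbed into the proof of \thref{local-character-for-universal-kim-ind}, which itself depends on \thref{hind-has-local-character} and \thref{weak-symmetry}, and into the verification in \thref{compatible-kim-ind} that GUWP collapses $\ind^2$-Kim-dividing to $\ind^1$-Conant-dividing.
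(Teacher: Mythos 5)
Your proposal is correct and is precisely the paper's own proof, which reads ``Combine \thref{local-character-for-universal-kim-ind,compatible-kim-ind}'': the identification of $\ind^2$-Kim-independence with $\ind^1$-Conant-independence via \thref{compatible-kim-ind}, followed by chain local character for $\ind^1$-Conant-independence from \thref{local-character-for-universal-kim-ind}. Nothing further is needed.
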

\begin{proof}
    Combine \thref{local-character-for-universal-kim-ind,compatible-kim-ind}.
\end{proof}
\begin{example}
    With this and \thref{examples-of-uwp}, we recover the following results from the literature:
    \begin{enumerate}[(i)]
        \item (Corollary to \cite[Theorem 2.4]{kim2001simplicity}) If $T$ is simple, then $\find$ satisfies chain local character. 
        \item (\cite[Corollary 4.6]{kaplan2019local}) If $T$ is $\NSOP_1$, then Kim-independence satisfies chain local character.
    \end{enumerate}
\end{example}
\begin{remark}
    Note that the above lemma does not extend to $\find$ and $\kind$ in $\NTP_2$ and $\NBTP$ theories, respectively, because outside of simple and $\NSOP_1$ theories, $(\find)^{\text{opp}}$ and $(\kind)^{\text{opp}}$ do not satisfy strong finite character.
\end{remark}

A different way to obtain chain local character for relative Kim-independence which avoids witnessing is via symmetry. 
\begin{lemma}\thlabel{symmetry-implies-local-character}
    Let $\ind$ be an independence relation satisfying full existence and monotonicity. If $\ind$-Kim-independence is symmetric, then it satisfies chain local character. 
\end{lemma}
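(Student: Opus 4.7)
The plan is to deduce chain local character for $\ind$-Kim-independence from the chain local character of heir-independence (\thref{hind-has-local-character}) by routing through $\uind$ on the right and then exploiting the assumed symmetry. Fix a finite tuple $a$ and let $\kappa$ be the regular cardinal witnessing chain local character of $\hind$ for $a$. Given a continuous chain $(M_i)_{i < \kappa}$ of models of $T$ with $\size{M_i} < \kappa$ for all $i < \kappa$, and $M := \bigcup_{i < \kappa} M_i$, chain local character of $\hind$ produces some $i < \kappa$ with $a \hind_{M_i} M$. Unpacking the definition $\hind = (\uind)^{\text{opp}}$, this is the same as $M \uind_{M_i} a$.

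The next step is to invoke the standard implications $\uind \implies \iind \implies \find$ recorded after \thref{ind-relations}, together with \thref{find-stronger-than-relative-kim}, which asserts $\find \implies \ind$-Kim-independence whenever $\ind$ satisfies full existence. Since $\ind$ has full existence by assumption, we conclude $M \ind^{\text{K}^\ind}_{M_i} a$. Applying the assumed symmetry of $\ind$-Kim-independence then yields $a \ind^{\text{K}^\ind}_{M_i} M$, which is precisely chain local character for $\ind$-Kim-independence.

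There is no serious technical obstacle here; the argument is essentially a chase through previously established implications, with symmetry as the final ingredient permitting the transfer to the left. The conceptual content is simply that, once $\ind$-Kim-independence is symmetric, its chain local character can be imported from that of $\hind$ by crossing to the coheir side on the right and inverting. This recovers in an abstract setting the standard route used in \cite{kaplan2020kim} to establish chain local character for Kim-independence in $\NSOP_1$ theories.
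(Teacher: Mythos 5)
Your proof is correct and follows essentially the same route as the paper: both arguments import chain local character from $\hind$, establish that $\uind$ (on the right) implies $\ind$-Kim-independence, and then use symmetry to flip to the left. The only variation is in the middle step, where the paper cites an abstract lemma from \cite{dobrowolski2022kim} (deriving $\uind \implies \ind$-Kim-independence from strong finite character, monotonicity, and existence), whereas you chase the concrete implications $\uind \implies \find$ and $\find \implies \ind$-Kim-independence (\thref{find-stronger-than-relative-kim}) already available in the paper, which is, if anything, more self-contained.
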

\begin{proof}
    Suppose $\ind$-Kim-independence is symmetric. By \thref{mon-and-strong-fin-char-are-preserved} and full existence and monotonicity for $\ind$, $\ind$-Kim-independence also satisfies strong finite character, monotonicity, and existence. Therefore, by \cite[Lemma 9.5]{dobrowolski2022kim}, $\uind \implies \ind$-Kim-independence, and thus, by symmetry, $\hind \implies \ind$-Kim-independence. Hence, as before, $\ind$-Kim-independence satisfies chain local character. 
\end{proof}
\section{Symmetry}\label{sec:symmetry}
In this section, we will make explicit the formal connection between witnessing and symmetry that appears in contexts such as simplicity and $\text{NSOP}_1$. This result forms the basis for more complicated properties, such as relativisations of the Independence Theorem.
\begin{theorem}\thlabel{symmetry-characterisation}
    Let $\ind$ be an independence relation satisfying full existence and monotonicity. Suppose that $\ind$-Kim-independence is compatible with $\ind$ and satisfies left transitivity. The following are equivalent:
    \begin{enumerate}[(i)]
        \item $\ind$-Kim-independence satisfies GUWP w.r.t. $\ind$.
        \item $\ind$-Kim-independence is symmetric. 
    \end{enumerate}
\end{theorem}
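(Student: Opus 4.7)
The plan is to prove the two implications separately. Throughout, I use that $\ind$-Kim-independence satisfies monotonicity, strong finite character, existence, right extension, and normality by \thref{mon-and-strong-fin-char-are-preserved}.

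For (i) $\Rightarrow$ (ii), the approach is via a construction. Given $a \ind^{\text{K}^{\ind}}_M b$, I want $b \ind^{\text{K}^{\ind}}_M a$. The main step is to build an $\ind$-Kim-Morley sequence $(a'_n)_{n<\omega}$ over $M$ that is $Mb$-indiscernible with each $a'_n \equiv_{Mb} a$: first iterate right extension of $\ind$-Kim-independence starting from $a \ind^{\text{K}^{\ind}}_M b$ to obtain $(a_n)$ with $a_n \equiv_{Mb} a$ and $a_n \ind^{\text{K}^{\ind}}_M b a_{<n}$; then apply Ramsey and compactness to extract an $Mb$-indiscernible version $(a'_n)$, noting that strong finite character of $\ind$-Kim-independence combined with the $\tp(\cdot/M)$-invariance of $\ind$-Kim-dividing preserves the $\ind$-Kim-Morley property. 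Suppose for contradiction $b \nind^{\text{K}^{\ind}}_M a$: strong finite character and \thref{ind-kim-forking-characterisation} deliver a formula $\chi(y,a) \in \tp(b/Ma)$ that $\ind$-Kim-forks over $M$, so that $\chi(y,a) \vdash \bigvee_{i<k} \psi_i(y, c_i)$ with each $\psi_i$ $\ind$-Kim-dividing. Since $a'_n \equiv_{Mb} a$, each $b \models \chi(y, a'_n)$; pigeonhole fixes a single $\psi_{i_0}$ realised by $b$ along an infinite subsequence, with parameters that are the images of $c_{i_0}$ under automorphisms sending $a \mapsto a'_n$. Left transitivity of $\ind$-Kim-independence is then invoked to coherently extend the $\ind$-Kim-Morley structure of $(a'_n)$ over these auxiliary parameters, at which point GUWP applied to $\psi_{i_0}$ forces $\{\psi_{i_0}(y, \cdot)\}$ inconsistent along the subsequence, contradicting $b$'s realisation of it.

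For (ii) $\Rightarrow$ (i), assume symmetry of $\ind$-Kim-independence. A first observation is that symmetry together with right extension yields left extension, by transposition via symmetry. Now let $\phi(x,b)$ $\ind$-Kim-divide over $M$ and $(b_i)$ be $\ind$-Kim-Morley over $M$ with $b_0 = b$; suppose $a \models \{\phi(x,b_i) : i<\omega\}$ for contradiction. Each $a \nind^{\text{K}^{\ind}}_M b_i$ (since $\phi(x,b_i)$ $\ind$-Kim-divides over $M$ by $b_i \equiv_M b$), and by symmetry $b_i \nind^{\text{K}^{\ind}}_M a$ for every $i$. Strong finite character with $M$-indiscernibility of $(b_i)$ produces a uniform witness $\chi(y, a) \in \tp(b_i/Ma)$. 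I expect the argument then to mirror the first direction dually: exploiting left extension, construct a companion $\ind$-Kim-Morley sequence $(a_n)$ over $M$ that is $Mb_0$-indiscernible with $a_n \equiv_{Mb_0} a$, and invoke left transitivity together with the Kim-forking disjunction of $\chi$ and pigeonhole to reduce to a single $\ind$-Kim-dividing formula realised by $b_0$ along $(a_n)$. The contradiction is then extracted from the fact that the $\ind$-Kim-Morley structure of $(b_i)$ — a condition on the $b$-side — is incompatible with the sequence of $a_n$'s on the dual side jointly dividing for an inconsistent family, via a symmetric application of the argument from the first direction.

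The main obstacle, common to both directions, is the mismatch between $\ind$-Kim-forking (which strong finite character naturally delivers) and $\ind$-Kim-dividing (which GUWP and the definition of Kim-dividing govern). The hypotheses do not a priori force these to coincide via \thref{left-extension-when-kim-fork-eq-kim-div}, so the left transitivity assumption on $\ind$-Kim-independence must be used as the bridging tool: it allows the $\ind$-Kim-Morley structure to be coherently extended over the auxiliary parameters appearing in a Kim-forking disjunction, reducing the problem to a single $\ind$-Kim-dividing formula on which GUWP (or, in the second direction, its contrapositive) can be brought to bear. I expect this parameter-handling step to be the technical heart of the proof, with the rest proceeding by standard Ramsey, compactness, and pigeonhole manipulations.
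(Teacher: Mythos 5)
Your proposal has genuine gaps in both directions. In (i)$\Rightarrow$(ii), the first half matches the paper: starting from $a$ $\ind$-Kim-independent from $b$ over $M$, one iterates right extension of $\ind$-Kim-independence and extracts (the paper uses Erd\H{o}s--Rado) an $Mb$-indiscernible sequence $(a'_n)_{n<\omega}$ which is still Morley for $\ind$-Kim-independence over $M$. But your treatment of the Kim-forking disjunction does not work. Left transitivity is a statement of the form ``$N \ind_M A$ and $B \ind_N A$ imply $NB \ind_M A$''; it does not allow you to enlarge each $a'_n$ to a tuple $a'_nc^n$ while preserving the Morley structure --- that would be a left-extension-type property --- and the parameters $c^n$ produced by arbitrary automorphisms over $Mb$ sending $a \mapsto a'_n$ carry no indiscernibility or independence structure over $M$, so GUWP cannot be applied to $\psi_{i_0}(y,c_{i_0})$ along them. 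The forking/dividing mismatch you identify is bridged not by left transitivity but by hypothesis (i) itself: under GUWP, $\ind$-Kim-forking and $\ind$-Kim-dividing coincide (\thref{rel-wit-implies-kim-fork-eq-kim-div}), after which your sequence $(a'_n)$ finishes the argument immediately; alternatively, as the paper does, one avoids formulas altogether by observing that $(a'_n)$ shows $\tp(b/Ma)$ does not Conant-divide with respect to $\ind$-Kim-independence, and then \thref{compatible-kim-ind} converts this directly into $b$ being $\ind$-Kim-independent from $a$. In particular this direction needs no left transitivity at all.

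The direction (ii)$\Rightarrow$(i) is where the substance of the theorem lies, and your sketch is missing the argument. Your plan to ``mirror'' the first direction by building an $Mb_0$-indiscernible $\ind$-Kim-Morley sequence $(a_n)$ with $a_n \equiv_{Mb_0} a$ cannot get started: in your contradiction setup $a$ realises $\phi(x,b_0)$, which $\ind$-Kim-divides over $M$, so $a$ is \emph{not} $\ind$-Kim-independent from $b_0$ over $M$, and right extension of $\ind$-Kim-independence --- the only tool that produced such a companion sequence in the first direction --- is unavailable; nor does any ``incompatibility of the two Morley structures'' yield an inconsistency without already assuming some form of witnessing, which is precisely what is to be proved. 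The paper's proof is of a different nature: symmetry first gives chain local character for $\ind$-Kim-independence (\thref{symmetry-implies-local-character}, via $\hind$ implying $\ind$-Kim-independence); then, for a Kim-Morley sequence $(b_i)_{i<\kappa^+}$, one constructs an independence chain of small models $(M_i)_{i<\kappa^+}$ with $b_{<i} \subset M_i$ and $b_i$ $\ind$-Kim-independent from $M_i$ over $M$ (\thref{existence-of-independence-chains}, a compactness construction exploiting strong finite character); chain local character applied to a realisation $c$ of $\{\phi(x,b_i) : i < \kappa^+\}$ yields some $i$ with $c$ $\ind$-Kim-independent from $b_i$ over $M_i$, and then symmetry together with left transitivity (base change from $M_i$ down to $M$ --- this is where left transitivity is actually used) gives $M_ic$ $\ind$-Kim-independent from $b_i$ over $M$, contradicting the Kim-dividing of $\phi(x,b_i)$. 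Without this local-character/independence-chain machinery, your outline has no route to the conclusion.
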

\begin{proof}
   ((i) $\Rightarrow$ (ii)) Suppose that $a$ is $\ind$-Kim-independent from $b$ over $M$. By \thref{weak-symmetry}, $\tp(b/Ma)$ does not $\ind^{\text{K}^{\ind}}$-Conant-divide over $M$. Thus, by GUWP, $\tp(b/Ma)$ does not $\ind$-Kim-divide over $M$, and hence, by compatibility and  \thref{rel-wit-implies-kim-fork-eq-kim-div}, $b$ is $\ind$-Kim-independent from $a$ over $M$.

   ((ii) $\Rightarrow$ (i)) We adapt the argument from \cite{kaplan2021transitivity}. Recall (from \cite{dobrowolski2022kim} that we say a continuous chain of models $(M_i)_{i<\kappa}$ is an \textbf{$\ind$-independence chain} for $(a_i)_{i<\kappa}$ over $M$ if $M \subseteq M_0$, $a_{<i} \subset M_i$, and $a_i \ind_M M_i$ for all $i < \kappa$. Let $\phi(x,b)$ be a formula that $\ind$-Kim-divides over $M$. Let $(b_i)_{i<\omega}$ be an $\ind^{\text{K}^{\ind}}$-Morley sequence over $M$ with $b_0 = b$. Using right extension, we may extend the sequence to length $\kappa$ for a sufficiently large $\kappa$.

    Assume, for contradiction, that $\{\phi(x,b_i) : i < \kappa\}$ is consistent. Let $a$ be a realisation. Since it is symmetric, by \thref{symmetry-implies-local-character} $\ind$-Kim-independence satisfies chain local character; we may assume $\kappa$ is the cardinal coming out of this property for the finite tuple $a$. Using existence and right extension, we can find an $\ind^{\text{K}^{\ind}}$-independence chain of models $(M_i)_{i<\kappa}$ for $(b_i)_{i<\kappa}$ over $M$ such that $\size{M_i} < \kappa$. By chain local character, if we let $M_{\kappa} := \bigcup_{i < \kappa} M_i$, then $a$ is $\ind$-Kim-independent from $M_{\kappa}$ over $M_i$ for some $i < \kappa$. So, by monotonicity, $a$ is $\ind$-Kim-independent from $b_i$ over $M_i$. But, by choice of the continuous chain of models, $b_i$ is $\ind$-Kim-independent from $M_i$ over $M$, so by symmetry and transitivity it follows that $M_ia$ is $\ind$-Kim-independent from $b_i$ over $M$. This contradicts the fact that $\phi(x,b_i) \in \tp(a/Mb_i)$ and it $\ind$-Kim-forks over $M$ by assumption. 

    Hence, $\{\phi(x,b_i) : i < \kappa\}$ is inconsistent, and thus, by indiscernibility, it is $k$-inconsistent for some $k < \omega$. Therefore, $\{\phi(x,b_i) : i < \omega\}$ is inconsistent, which means that $\phi(x,b)$ $\ind^{\text{K}^{\ind}}$-Conant-divides over $M$.
\end{proof}
\begin{remark}
    The condition of compatibility between $\ind$ and $\ind$-Kim-independence is a technical assumption that is easy to prove in most concrete cases. For example, one can easily see from \thref{find-stronger-than-relative-kim} that it suffices to have compatibility between $\ind$ and $\find$. In particular, $\uind$, $\iind$, $\find$ and $\aind$ all satisfy this condition.
\end{remark}
\begin{example}\thlabel{find-symmetry-simple}
    By Kim's Lemma for simple theories \cite[Proposition 2.1]{kim1998forking}, if $T$ is simple, then $\find$ satisfies GUWP w.r.t. $\aind$ and $\find = \aind$-Kim-independence. Since $\find$ always satisfies left transitivity (\thref{ind-relations}), the above result recovers symmetry of $\find$ \cite[Theorem 2.5]{kim1998forking}.
\end{example}
One immediate consequence of the above result is a new characterisation of simplicity within $\NTP_2$ theories. This result is the analogue of \cite[Proposition 8.7]{kaplan2020kim}, which similarly characterises simplicity within $\NSOP_1$ theories.
\begin{corollary}
    An $\NTP_2$ theory is simple iff $\istind$-Kim-independence satisfies GUWP w.r.t. $\istind$.
\end{corollary}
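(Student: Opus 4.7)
The plan is to reduce the corollary to \thref{symmetry-criterion-with-guwp} via the preliminary observation that, in any $\NTP_2$ theory, $\istind$-Kim-independence coincides with $\find$ over models. To establish this I will combine \thref{examples-of-uwp}(ii), which gives UWP of $\istind$ w.r.t.\ $\aind$ in $\NTP_2$, with \thref{ntp2-forking-algebraic-kim-forking}, which identifies $\aind$-Kim-dividing with dividing over models in $\NTP_2$. Unpacking UWP at the level of $\cl$-classes, any formula dividing over $M$ will lie in $\cl(p)$ for every $\istind$-free global extension $p$, i.e.\ it $\istind$-Conant-divides over $M$; the converse is immediate because every $\istind$-Morley sequence is $M$-indiscernible. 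Hence $\istind$-Conant-independence equals $\find$ over models in $\NTP_2$. Using full existence of $\istind$ in $\NTP_2$ (from \cite{chernikov2012forking}), the sandwich
\begin{equation*}
\find \implies \istind\text{-Kim-independence} \implies \istind\text{-Conant-independence} = \find,
\end{equation*}
obtained from \thref{find-stronger-than-relative-kim} and \thref{ind-kim-implies-universal-ind-kim}, will then close the reduction.

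Once the reduction is in place, the forward direction is quick: assume $T$ is simple. Then \thref{examples-of-uwp}(i) gives that $\find$ satisfies UWP, and hence GUWP (by the quasi-strong finite character of $\find$ together with the corollary following \thref{collapse-of-sequences-and-types}), with respect to itself. Since $\istind \implies \find$, \thref{relative-dominating-between-relations} applied with $\ind^1 = \ind^2 = \ind^4 = \find$ and $\ind^3 = \istind$ promotes this to GUWP of $\find$ w.r.t.\ $\istind$, which by the key reduction is exactly the desired statement.

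For the backward direction, I will assume that $\istind$-Kim-independence satisfies GUWP w.r.t.\ $\istind$. Since $\istind$ has full existence, \thref{symmetry-criterion-with-guwp} will yield that $\istind$-Kim-independence is symmetric; by the key reduction this says $\find$ is symmetric over models in $T$, and the classical characterisation of simplicity via symmetry of non-forking (see, e.g., \cite{adler2009geometric}) then gives that $T$ is simple.

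The main obstacle is the key reduction $\istind$-Kim-independence $= \find$ in $\NTP_2$; the delicate point is the translation of UWP of $\istind$ w.r.t.\ $\aind$ into the coincidence of $\istind$-Conant-dividing with dividing, requiring some bookkeeping on $\cl$-classes. Once that is in place the corollary becomes a clean application of \thref{symmetry-criterion-with-guwp} together with the abstract machinery of \S4.
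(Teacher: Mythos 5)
Your proposal is correct and follows essentially the same route as the paper: the same reduction $\istind$-Kim-independence $=\find$ in $\NTP_2$ via \thref{examples-of-uwp}, \thref{find-stronger-than-relative-kim} and \thref{ind-kim-implies-universal-ind-kim}, followed by the symmetry/GUWP equivalence. The only (harmless) variation is in the forward direction, where you invoke Kim's Lemma for simple theories plus \thref{relative-dominating-between-relations} instead of the symmetry-implies-GUWP direction of \thref{symmetry-characterisation}; both close the argument equally well.
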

\begin{proof}
    By \thref{examples-of-uwp,find-stronger-than-relative-kim,ind-kim-implies-universal-ind-kim}, if $T$ is $\NTP_2$, then $\find = \istind$-Kim-independence and so it satisfies left transitivity. Thus, if $\istind$-Kim-independence satisfies GUWP w.r.t. $\istind$, then by \thref{symmetry-characterisation} $\find$ is symmetric, and thus $T$ is simple. 
    
    Conversely, if $T$ is simple, then $\find = \istind$-Kim-independence is symmetric, and therefore by \thref{symmetry-characterisation} $\istind$-Kim-independence satisfies GUWP w.r.t. $\istind$.
\end{proof}
Another application of our result provides an abstract, purely semantic proof of one of the implications of \cite[Theorem 3.12]{mutchnik2024conant}, without the need to use the technical tree machinery of the original proof. Let us recall from \cite[Definition 3.3]{mutchnik2024conant} that $\ind$ satisfies the \textbf{generalised freedom axiom} if, whenever $M \prec N \models T$ and there is an $N$-indiscernible $\ind$-Morley sequence over $M$ starting at $a$, then every $\ind$-Morley over $N$ starting at $a$ is also $\ind$-Morley over $M$.
\begin{lemma} \thlabel{stat-and-ind-kim-forking}
    Suppose $\ind$ satisfies full existence, monotonicity, and stationarity over models. Then $\ind$-Kim-forking and $\ind$-Kim-dividing coincide.
\end{lemma}
\begin{proof}
    This is \cite[Proposition 3.10]{mutchnik2024conant}. Within our framework, this result can be obtained from two facts: (i) given our assumptions, $\ind$ also satisfies left and right extension, which means that (ii) by \thref{univ-kim-forking-iff-univ-kim-dividing}, $\ind$-Conant-forking and $\ind$-Conant-dividing coincide. Since stationarity implies that $\ind$-Kim-independence and $\ind$-Conant-independence agree, the result follows.
\end{proof}
\begin{lemma} \thlabel{left-transitivity-using-generalised-freedom}
    Suppose $\ind$ satisfies full existence, monotonicity, stationarity over models, and the generalised freedom axiom. Then $\ind$-Kim-independence satisfies left transitivity. 
\end{lemma}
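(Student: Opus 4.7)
The plan is to unpack left transitivity for $\ind$-Kim-independence via \thref{ind-kim-dividing-characterisation} and derive a contradiction from a single witness sequence that is simultaneously indiscernible over $MN$ and over $Nb$. Assume $N$ is $\ind$-Kim-independent from $a$ over $M$ and $b$ is $\ind$-Kim-independent from $a$ over $N$; we want $Nb$ to be $\ind$-Kim-independent from $a$ over $M$. Fix $\phi(y, a) \in \tp(Nb/Ma)$ and suppose, for contradiction, that it $\ind$-Kim-divides over $M$. Pick a global $M$-$\ind$-free extension $q$ of $\tp(a/M)$ and an $M$-indiscernible Morley sequence $(a_i)_{i<\omega}$ in $q$ with $a_0 = a$ such that $\{\phi(y, a_i) : i < \omega\}$ is inconsistent. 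Applying \thref{ind-kim-dividing-characterisation} to $N$ over $M$ and moving by an automorphism fixing $Ma$, I may assume that $(a_i)$ is $MN$-indiscernible.

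The decisive step is to view $(a_i)$ as a Morley sequence in some global $N$-$\ind$-free extension of $\tp(a/N)$, so as to be able to feed it into the hypothesis that $b$ is $\ind$-Kim-independent from $a$ over $N$. This is where the generalised freedom axiom and stationarity come in: freedom enables us to replace the base $M$ by $N$ in the $\ind$-freeness of $q$, producing a global $N$-$\ind$-free extension $q'$ of $\tp(a/N)$, while stationarity guarantees that the $\ind$-free extension of the $M$-type of each $a_i$ to $N$ is unique, so that the Morley condition is preserved, i.e., $(a_i)$ is a Morley sequence in $q'$ over $N$.

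With this, I apply \thref{ind-kim-dividing-characterisation} once more, now to $b$ over $N$ along the $N$-Morley sequence $(a_i)$ in $q'$, obtaining $b^* \equiv_{Na} b$ such that $(a_i)$ is $Nb^*$-indiscernible. Since $b^* \equiv_{Na} b$ and $N$ is preserved, $Nb^* \equiv_{Ma} Nb$, so $\models \phi(Nb^*, a_0)$, and $Nb^*$-indiscernibility of $(a_i)$ gives $\models \phi(Nb^*, a_i)$ for all $i$, contradicting the inconsistency of $\{\phi(y, a_i) : i < \omega\}$.

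The main obstacle is the middle paragraph: verifying that the generalised freedom axiom combined with stationarity really does deliver the global $N$-$\ind$-free extension $q'$ with $(a_i)$ still Morley in it over $N$. This is precisely the base-change compatibility that the generalised freedom axiom is designed to provide, and it is the substantive content of the lemma; the flanking applications of \thref{ind-kim-dividing-characterisation} are routine.
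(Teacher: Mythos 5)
Your overall architecture (unpack both hypotheses via \thref{ind-kim-dividing-characterisation}, produce a single sequence indiscernible over both $MN$ and over $Nb$) is in the right spirit, but the ``decisive step'' in your middle paragraph has the base change going in the wrong direction, and this is a genuine gap. The (generalised) freedom axiom, in Conant's and Mutchnik's formulations and as it is used in this paper, lets one pass from independence over a \emph{larger} base to independence over a \emph{smaller} one: from $a_i \ind_N a_{<i}$ one may conclude $a_i \ind_M a_{<i}$ for $M \prec N$. What you need is the opposite implication, $a_i \ind_M a_{<i} \Rightarrow a_i \ind_N a_{<i}$, in order to promote your $M$-$\ind$-free Morley sequence $(a_i)_{i<\omega}$ to a Morley sequence in some global $N$-$\ind$-free extension $q'$ of $\tp(a/N)$. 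That is (right) base monotonicity, which is not among the hypotheses, and neither stationarity nor the $MN$-indiscernibility you arranged supplies it: stationarity only identifies the $N$-$\ind$-free extension \emph{once} you know $a_i \ind_N a_{<i}$, and indiscernibility gives $a_i \equiv_N a$ but says nothing about $\ind$ over $N$.

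The paper's proof runs the argument in the opposite direction precisely to exploit the downward nature of freedom: it starts from an $Nb$-indiscernible $\ind$-Morley sequence $I$ over the \emph{larger} base $N$ starting at $a$ (witnessing that $b$ is $\ind$-Kim-independent from $a$ over $N$), uses generalised freedom to conclude that $I$ is still $\ind$-Morley over $M$, and then uses stationarity together with \thref{collapse-of-sequences-and-types} to recognise $I$ as a Morley sequence in the \emph{unique} global $M$-$\ind$-free extension of $\tp(a/M)$; since $I$ is $MNb$-indiscernible, \thref{ind-kim-dividing-characterisation} then yields the conclusion. If you want to keep your contradiction-style framing, you should therefore take your witnessing sequence for the alleged $\ind$-Kim-dividing of $\phi(y,a)$ over $M$ and, using stationarity, replace it by the sequence $I$ obtained from the $N$-side hypothesis (which by freedom and uniqueness of the $M$-$\ind$-free extension realises the same class), rather than trying to lift your $M$-based sequence up to base $N$.
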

\begin{proof}
    Suppose that $M \prec N$ are models of $T$, $N$ is $\ind$-Kim-independent from $a$ over $M$, and $b$ is Kim-independent from $a$ over $N$. In particular, by full existence and monotonicity for $\ind$, there exists an $N$-indiscernible $\ind$-Morley sequence over $M$ starting at $a$. Similarly, there exists an $Nb$-indiscernible $\ind$-Morley sequence $I$ over $N$ starting at $a$. In particular, by the generalised freedom axiom, $I$ is also $\ind$-Morley over $M$, and since it is $Nb$-indiscernible, it is also $Mb$-indiscernible. Since, by stationarity, $\tp(a/M)$ has a unique global $M$-$\ind$-free extension $q$ and $I$ is Morley in $q$ over $M$ by \thref{collapse-of-sequences-and-types}, it follows by definition and \thref{stat-and-ind-kim-forking} that $b$ is $\ind$-Kim-independent from $a$ over $M$, as claimed.
\end{proof}
\begin{corollary}
    Let $\ind$ be an independence relation satisfying full existence, monotonicity, stationarity over models, and the generalised freedom axiom. If $\ind$-Kim-independence is symmetric, then $\iind$ satisfies GUWP w.r.t. $\ind$. 
\end{corollary}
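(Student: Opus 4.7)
The plan is to combine the immediately preceding lemma with the main theorem of the section and the transfer principle for GUWP across implications of independence relations.

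First, since $\ind$ satisfies full existence, stationarity, and the generalised freedom axiom, Lemma \ref{left-transitivity-using-generalised-freedom} tells us that $\ind$-Kim-independence satisfies left transitivity. Combined with the hypothesis that $\ind$-Kim-independence is symmetric, Theorem \ref{symmetry-characterisation} applies and gives us that $\ind$-Kim-independence satisfies GUWP with respect to $\ind$.

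Next, I would chain the known implications. The standard implications from \thref{ind-relations} yield $\iind \implies \find$, and then \thref{find-stronger-than-relative-kim} (which requires only full existence for $\ind$) gives $\find \implies \ind$-Kim-independence. Composing, we obtain $\iind \implies \ind$-Kim-independence.

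Finally, apply \thref{relative-dominating-between-relations} with $\ind^1 := \iind$, $\ind^2 := \ind$-Kim-independence, and $\ind^3 = \ind^4 := \ind$: since $\ind^1 \implies \ind^2$, $\ind^3 \implies \ind^4$ trivially, and $\ind^2$ has GUWP w.r.t. $\ind^4$ by the previous paragraph, we conclude that $\iind$ has GUWP w.r.t. $\ind$, as required. There is no real obstacle here since each ingredient has just been established; the corollary is genuinely a matter of assembling Lemma \ref{left-transitivity-using-generalised-freedom}, Theorem \ref{symmetry-characterisation}, and the monotonicity of GUWP under implications.
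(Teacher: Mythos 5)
Your proof is correct and follows essentially the same route as the paper: apply \thref{left-transitivity-using-generalised-freedom} to get left transitivity, invoke \thref{symmetry-characterisation} for GUWP of $\ind$-Kim-independence w.r.t.\ $\ind$, and transfer via \thref{relative-dominating-between-relations} using $\iind \implies \ind$-Kim-independence. The only difference is that you spell out the implication $\iind \implies \ind$-Kim-independence (via $\find$ and \thref{find-stronger-than-relative-kim}), which the paper leaves implicit.
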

\begin{proof}
    By \thref{left-transitivity-using-generalised-freedom}, $\ind$-Kim-independence satisfies left transitivity. So, by \thref{symmetry-characterisation} (observe that the compatibility condition holds in this case), if $\ind$-Kim-independence is symmetric, then $\ind$-Kim-independence satisfies GUWP w.r.t. $\ind$. As $\iind \implies \ind$-Kim-independence, it follows from \thref{relative-dominating-between-relations} that $\iind$ satisfies GUWP w.r.t. $\ind$. 
\end{proof}
In this way, we recover half of \cite[Theorem 3.12]{mutchnik2024conant}. The other direction requires further tools; \thref{symmetry-characterisation} can only prove the weakening that involves $\ind$-Kim-independence instead of $\iind$. 

Let us finish this section by discussing further the relationship between Adler's notion of an \textit{independence relation} and the notion of witnessing that we have developed here. Let us start with the following definition (we adapt the terminology from \cite{adler2008introduction,casanovas2011simple,delbee2023axiomatic}):
\begin{definition}
    We say $\ind$ is an \textbf{Adler preindependence relation} if it satisfies monotonicity, right base monotonicity, left transitivity, left normality, and strong finite character.

    We say $\ind$ is an \textbf{Adler independence relation} if it satisfies monotonicity, right base monotonicity, left transitivity, left normality, finite character, local character, and right extension.
\end{definition}
A well-known theorem of Adler says the following:
\begin{fact}[\protect{\cite[Theorem 2.5]{adler2009geometric}}] \thlabel{adler}
    Every Adler independence relation is symmetric. 
\end{fact}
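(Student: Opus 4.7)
The plan is to derive a contradiction from assuming $a \ind_M b$ but $b \nind_M a$, by combining strong finite character, right extension, chain local character, and left transitivity---the five Adler axioms---to produce a long tower of conjugates of $a$ over $Mb$ and then to descend the base of independence from the tower back to $M$.

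First, by monotonicity one reduces to the case of finite tuples $a, b$. By strong finite character, extract a formula $\phi(y, a) \in \tp(b/Ma)$ such that every realisation $b'$ of $\phi(y, a)$ satisfies $b' \nind_M a$. Let $\kappa$ be a regular cardinal above $|M| + |T|$ and above the local character cardinal for $b$. I would then build, by transfinite recursion on $i < \kappa$, a continuous chain of small models $M \prec M_0 \prec M_1 \prec \cdots$ with $b \subseteq M_0$ and $|M_i| < \kappa$, together with conjugates $a_i \equiv_{Mb} a$, arranged so that $a_{<i} \subseteq M_i$ and $a_i \ind_M M_i$ at every stage $i < \kappa$. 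At stage $i$, after enlarging $M_i$ to contain all prior data (including $a_{<i}, M_{<i}$ and $b$), I apply right extension to the hypothesis $a \ind_M b$ to obtain such an $a_i$. Since $a_i \equiv_{Mb} a$, invariance under $\Aut(\M / Mb)$ transports $\phi$ to $\phi(y, a_i) \in \tp(b / Ma_i)$, and so by the choice of $\phi$ we force $b \nind_M a_i$ for every $i < \kappa$.

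By \thref{local-character-over-models}, an Adler independence relation automatically satisfies chain local character, so applied to $b$ and the continuous chain $(M_i)_{i < \kappa}$ it yields an index $i_0 < \kappa$ with $b \ind_{M_{i_0}} M_\kappa$, where $M_\kappa := \bigcup_{i < \kappa} M_i$. Since $a_{i_0} \in M_{i_0 + 1} \subseteq M_\kappa$, right monotonicity gives $b \ind_{M_{i_0}} a_{i_0}$.

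The main obstacle is now to promote this to $b \ind_M a_{i_0}$, which would contradict $b \nind_M a_{i_0}$. Left transitivity applied with $N = M_{i_0}$ would close the argument, but it demands $M_{i_0} \ind_M a_{i_0}$ on the \emph{left}, whereas the construction provides only the opposite inequality $a_{i_0} \ind_M M_{i_0}$. Since Adler's axioms do not include left extension, reversing the sides of independence is the technical heart of the argument; I would handle it by extracting, via Erdős--Rado, a long $M$-indiscernible subsequence of $(a_i)$ and combining the indiscernibility with left normality and left transitivity to shuttle the independence across the chain, as in Adler's original proof. Once $M_{i_0} \ind_M a_{i_0}$ is secured, left transitivity yields $M_{i_0} b \ind_M a_{i_0}$, and left monotonicity gives $b \ind_M a_{i_0}$, producing the desired contradiction.
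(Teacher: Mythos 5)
The paper itself offers no proof of this statement --- it is imported as a Fact from Adler --- so your argument has to stand on its own, and as written it does not: the decisive step is missing. Everything up to $b \ind_{M_{i_0}} a_{i_0}$ is fine (right extension builds the chain with $a_i \ind_M M_i$; invariance alone already gives $b \nind_M a_i$ for every $i$, since $a_i \equiv_{Mb} a$, so the formula $\phi$ from strong finite character is not even needed there; and \thref{local-character-over-models} plus right monotonicity give $b \ind_{M_{i_0}} a_{i_0}$). But at that point there is no contradiction: $b \ind_{M_{i_0}} a_{i_0}$ and $b \nind_M a_{i_0}$ are perfectly compatible, and lowering the base from $M_{i_0}$ to $M$ requires exactly the ingredient $M_{i_0} \ind_M a_{i_0}$ that you concede you do not have. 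Your construction, using right extension, can only ever produce independence in the direction $a_{i_0} \ind_M M_{i_0}$, and none of the Adler axioms (monotonicity, right base monotonicity, left normality, left transitivity, strong finite character, local character, right extension) ever reverses the sides of an instance of $\ind$; turning $a_{i_0} \ind_M M_{i_0}$ into $M_{i_0} \ind_M a_{i_0}$ is itself an instance of the symmetry being proved. So the sentence deferring this to ``Erd\H{o}s--Rado plus left normality and left transitivity, as in Adler's original proof'' is not a postponed routine detail: it is the entire content of the theorem, and the recipe named does not deliver it --- extracting an indiscernible sequence from $(a_i)$ discards the models $M_i$, so the chain local character step would have to be redone for the new sequence, and any rebuilt chain again has the independence on the unusable side; indiscernibility by itself never flips an instance of $\ind$.

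A sanity check from this very paper makes the gap concrete: \thref{existence-of-independence-chains} constructs precisely the kind of $\ind$-independence chain your argument needs, but its proof invokes symmetry in order to realise the type $\Delta(\bar{x})$, i.e.\ to place the models on the correct side of $\ind$. In other words, the ``chain of models plus chain local character'' route you propose is the one this paper uses only \emph{after} symmetry is available (cf.\ the proof of \thref{symmetry-characterisation}); it cannot be the engine that establishes symmetry in the first place. To repair the write-up you would have to actually carry out the side-reversal --- which is where Adler's argument does its real work, organised around the long sequence, base monotonicity over bases of the form $M a_{<i}$, and finite character, rather than around a pre-built chain of models --- not cite it as a black box.
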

The proof of this result sheds some light on the relationship of this notion with relative Kim- and Conant-independence:
\begin{lemma} \thlabel{air-and-existence-of-morley-seqs}
    Suppose $\ind$ is an Adler independence relation. Then $\ind = \ind^\times$, i.e., $a \ind_M b$ iff there exists an $Ma$-indiscernible $\ind$-Morley sequence over $M$ that starts at $b$. 
\end{lemma}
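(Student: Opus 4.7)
The plan is to prove each direction separately, freely using Fact 6.6 (Adler's symmetry theorem). For the forward direction, assume $a \ind_M b$. By Fact 6.6, $b \ind_M a$, and I would iteratively build a sequence $(b_i)_{i<\kappa}$ via right extension: taking $b_0 := b$ and, given $b_{<i}$, applying right extension to $b \ind_M a$ with the right side enlarged to $ab_{<i}$ to find $b_i \equiv_{Ma} b$ with $b_i \ind_M ab_{<i}$. Then Erd\H{o}s--Rado extracts an $Ma$-indiscernible sequence $(b'_i)_{i<\omega}$ whose finite subsequences have the same type over $Ma$ as finite subsequences of $(b_i)$; an automorphism fixing $Ma$ arranges $b'_0 = b$. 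Monotonicity on the original gives $b_i \ind_M b_{<i}$, and $\Aut(\M/M)$-invariance of $\ind$ transfers this to $b'_i \ind_M b'_{<i}$, so $(b'_i)$ is an $\ind$-Morley sequence over $M$ (also $M$-indiscernible).

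For the backward direction, let $(b_i)_{i<\omega}$ be an $Ma$-indiscernible $\ind$-Morley sequence over $M$ starting at $b$. Stretch to length $\kappa^+$, where $\kappa = \kappa(a)$ is the local character cardinal, preserving the Morley and indiscernibility properties by compactness. Local character supplies $C \subseteq \{b_i : i < \kappa^+\}$ with $\size{C} \leq \kappa$ and $a \ind_C \{b_i : i < \kappa^+\}$; choose $j < \kappa^+$ with $C \subseteq \{b_i : i < j\}$. By monotonicity, $a \ind_C b_j$, and the $\ind$-Morley property with monotonicity yields $b_j \ind_M C$. Once we establish $a \ind_M b_j$ for this $j$, $Ma$-indiscernibility ($b_j \equiv_{Ma} b$) together with $\Aut(\M/Ma)$-invariance of $\ind$ yields $a \ind_M b$.

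It therefore suffices to combine $a \ind_C b_j$ and $b_j \ind_M C$ into $a \ind_M b_j$. Using Fact 6.6, rewrite $b_j \ind_M C$ as $C \ind_M b_j$; take a small model $N \supseteq MC$, apply left extension (derived from symmetry and right extension) to $C \ind_M b_j$, and use an automorphism fixing $MC$ to arrange $N \ind_M b_j$. A parallel manipulation of $a \ind_C b_j$ via right extension and further automorphisms aims to establish $a \ind_N b_j$, after which left transitivity, applied with $M \prec N$, gives $Na \ind_M b_j$, hence $a \ind_M b_j$ by monotonicity.

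The main obstacle is precisely this combining step: lifting the small base $C$ to a model $N$ while preserving $N \ind_M b_j$ and simultaneously obtaining $a \ind_N b_j$ from $a \ind_C b_j$ is the delicate heart of the argument, since $C$ itself is not a model and the paper's right base monotonicity axiom applies only to models. This will require careful coordination of right extension, symmetry, left extension, normality, and automorphism bookkeeping, paralleling the approach Adler uses to prove Fact 6.6.
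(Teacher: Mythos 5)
Your forward direction is correct and is essentially the paper's: the paper uses left extension (available by Fact \thref{adler}) to build a long sequence with $b_{<i}a \ind_M b_i$ and then extracts an $Ma$-indiscernible sequence by Erd\H{o}s--Rado, while you build $b_i \ind_M ab_{<i}$ by right extension and flip with symmetry; these are interchangeable.

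The backward direction has a genuine gap, and you flag it yourself: the ``combining step'' you defer is not bookkeeping but the entire content of the implication, and the route you sketch for it does not go through. Concretely: (1) you apply local character with $B = \{b_i : i < \kappa^+\}$, so the small base $C$ need not contain or even meet $M$; you are then holding $a \ind_C b_j$ and $b_j \ind_M C$ over two incomparable bases, and no axiom in the paper merges independence statements over unrelated bases. The correct move is to apply local character to $M \cup \{b_i : i<\kappa^+\}$ and then use base monotonicity to push the base up to $Mb_{<j}$, so that both statements sit over bases comparable to $Mb_{<j}$. (2) Your passage from $a \ind_C b_j$ to $a \ind_N b_j$ for a model $N \supseteq MC$ is an enlargement of the base, i.e.\ a base-monotonicity move; but the paper's right base monotonicity requires the new base to be a model \emph{contained in the right-hand side}, which $N$ is not (it is not contained in $b_j$), and right extension only enlarges the right-hand side, never the base. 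The paper avoids all of this by citing Adler's Proposition 2.4 directly, which is proved in the arbitrary-base framework where base monotonicity and transitivity apply at the non-model base $Cb_{<j}$; if you insist on working inside the paper's models-only axiomatisation you must explain how to apply those two axioms at the base $Mb_{<j}$, and that is exactly the step left missing.
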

\begin{proof}
    ($\Rightarrow$) Suppose $a \ind_M b$. By \thref{adler}, $\ind$ satisfies left extension, so we can build inductively a sufficiently long sequence $(b_i)_{i < \kappa}$ with $b_0 = b$ such that $b_{<i} a \ind_M b_i$ for all $i < \kappa$. By Erd\H{o}s-Rado, compactness, monotonicity and an automorphism, there is an $\ind^{\text{opp}}$-Morley sequence $(b'_i)_{i < \omega}$ locally based on $(b_i)_{i<\kappa}$ and starting at $b$ which is $Ma$-indiscernible. By symmetry, $(b'_i)_{i < \omega}$ is the sequence we wanted.

    ($\Leftarrow$) Suppose that there exists an $Ma$-indiscernible $\ind$-Morley sequence $(b_i)_{i < \omega}$ with $b_0 = b$. Then it follows directly from \cite[Proposition 2.4]{adler2009geometric} and symmetry that $a \ind_M b$. 
\end{proof}
\begin{example}
    The above lemma strengthens \cite[Theorem 4.1.7]{onshuus2006properties} for thorn-forking in the context of rosy theories.
\end{example}
\begin{corollary}
    Suppose that $\ind$ is an Adler independence relation satisfying strong finite character. Then $\ind = \ind$-Conant-independence over models. 
\end{corollary}
\begin{proof}
    By \thref{air-and-existence-of-morley-seqs,commuting-star-and-sfc}, we have $\ind = \ind^* = (\ind^*)^\sfc = (\ind^\sfc)^* = ((\ind^\times)^\sfc)^* = \ind$-Conant-independence.
\end{proof}
It remains an open question (see \cite[Question 1.1]{adler2009thorn}) whether every Adler independence relation satisfies strong finite character. The above result shows that this condition is equivalent to the property called ``witnessing'' in \cite{kaplan2020kim}.
\begin{theorem} \thlabel{air-and-guwp}
    Let $\ind$ be an Adler independence relation. The following are equivalent:
    \begin{enumerate}[(i)]
        \item $\ind = \ind$-Kim-independence over models. 
        \item $\ind$ satisfies GUWP w.r.t. $\ind$.
    \end{enumerate}
\end{theorem}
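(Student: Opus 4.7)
The plan is to reduce both directions to the two Morley-sequence characterisations at our disposal: \thref{ind-kim-dividing-characterisation} for $\ind$-Kim-independence and \thref{air-and-existence-of-morley-seqs} for $\ind$ itself. Both characterise the corresponding independence in terms of the existence of an $Ma$-indiscernible $\ind$-Morley sequence over $M$ starting at $b$, so once GUWP is present the two notions are forced to coincide.

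For $(i) \Rightarrow (ii)$, I would invoke \thref{symmetry-characterisation}. An Adler independence relation is symmetric by \thref{adler} and satisfies left transitivity by definition, so assuming $(i)$ these properties transfer to $\ind$-Kim-independence. The direction of \thref{symmetry-characterisation} from symmetry to GUWP then gives that $\ind$-Kim-independence satisfies GUWP with respect to $\ind$, and rewriting via $(i)$ yields $(ii)$.

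For $(ii) \Rightarrow (i)$, I would prove two inclusions. For $\ind \implies \ind^{\textnormal{K}^{\ind}}$, suppose $a \ind_M b$ and assume for contradiction that some $\phi(x,b) \in \tp(a/Mb)$ $\ind$-Kim-forks over $M$. Combining monotonicity, full existence, and the hypothesis $(ii)$, \thref{rel-wit-implies-kim-fork-eq-kim-div} ensures that $\phi(x,b)$ in fact $\ind$-Kim-divides over $M$. Extract an $Ma$-indiscernible $\ind$-Morley sequence $(b_i)_{i<\omega}$ starting at $b$ via \thref{air-and-existence-of-morley-seqs}: GUWP forces $\{\phi(x,b_i) : i < \omega\}$ to be inconsistent, while $\models \phi(a,b)$ together with $Ma$-indiscernibility forces it to be realised by $a$, a contradiction. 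For $\ind^{\textnormal{K}^{\ind}} \implies \ind$, suppose $a \ind^{\textnormal{K}^{\ind}}_M b$. Using full existence, fix a global $M$-$\ind$-free extension $q \supset \tp(b/M)$ and an $M$-indiscernible Morley sequence $(b_i)_{i<\omega}$ in $q$ starting at $b$. By \thref{ind-kim-dividing-characterisation} there is $a' \equiv_{Mb} a$ such that $(b_i)_{i<\omega}$ is $Ma'$-indiscernible; since it is also $\ind$-Morley over $M$, the converse direction of \thref{air-and-existence-of-morley-seqs} yields $a' \ind_M b$, and $\Aut(\M/Mb)$-invariance of $\ind$ then gives $a \ind_M b$.

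The main obstacle is mostly bookkeeping: one must check that an Adler independence relation satisfies full existence (which follows from existence, right extension, and right monotonicity, with symmetry from \thref{adler} covering any left--right asymmetry) and that the invariance step in the last display is legitimate, given that \thref{ind-kim-dividing-characterisation} only produces some $a' \equiv_{Mb} a$ rather than $a$ itself. Once these routine verifications are handled, the proof is essentially a clean pairing of the two Morley-sequence characterisations, with GUWP serving as the bridge that makes them coincide.
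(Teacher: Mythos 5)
Your proposal is correct and takes essentially the same approach as the paper: the direction (i) $\Rightarrow$ (ii) is identical (symmetry via \thref{adler}, left transitivity from the definition, then \thref{symmetry-characterisation}), and your two explicit inclusions for (ii) $\Rightarrow$ (i) simply unpack the paper's terse appeal to \thref{compatible-kim-ind}, \thref{rel-wit-implies-kim-fork-eq-kim-div}, and \thref{air-and-existence-of-morley-seqs}, substituting a direct use of GUWP together with \thref{ind-kim-dividing-characterisation} for the Kim/Conant coincidence. The bookkeeping point you flag (full existence for an Adler independence relation) is equally implicit in the paper's own argument, so it is not a gap specific to your proof.
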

\begin{proof}
    ((i) $\Rightarrow$ (ii)) Assume $\ind = \ind$-Kim-independence. Then $\ind$-Kim-independence satisfies left transitivity and symmetry by \thref{adler}. Therefore, by \thref{symmetry-characterisation}, $\ind$-Kim-independence satisfies GUWP w.r.t. $\ind$. As $\ind = \ind$-Kim-independence, this means that $\ind$ satisfies GUWP w.r.t. $\ind$. 

    ((ii) $\Rightarrow$ (i)) If $\ind$ satisfies GUWP w.r.t. $\ind$, then by \thref{compatible-kim-ind} $\ind$-Kim-independence and $\ind$-Conant-independence coincide, and by \thref{rel-wit-implies-kim-fork-eq-kim-div} $\ind$-Kim-dividing and $\ind$-Kim-forking also agree. Thus, by \thref{air-and-existence-of-morley-seqs}, $\ind = \ind$-Kim-independence.
\end{proof}
We thus recover the implication that, if $\dind$ is an Adler independence relation, then $T$ is simple: if $\dind$ is an Adler independence relation, then $\dind = (\dind)^* = \find$ and so by \thref{air-and-existence-of-morley-seqs} $\find$-Kim-independence implies $\find$. Hence, by \thref{find-stronger-than-relative-kim}, $\find = \find$-Kim-independence. Therefore, by \thref{air-and-guwp}, $\find$ satisfies GUWP w.r.t. $\find$, i.e., Kim's Lemma for simple theories holds. So $T$ is simple.
\section{Independence Theorem}\label{sec:independence-thm}
In this section, we want to use the notion of witnessing developed previously to prove a generalised version of the independence theorem. This result has been key to the study of simple and $\NSOP_1$ theories. Our version of the Independence Theorem for relative Kim-independence will require us to briefly explore a property which appears already in \cite{adler2009geometric} and also, in passing, in \cite[\S9.2]{kaplan2020kim}, where it is not given an explicit name but only described as ``a weak form of transitivity.'' We will name it accordingly: 
\begin{enumerate}
    \item[($\star$)] \textit{Weak transitivity\index{transitivity!weak}}: For all $M \models T$ and $A,B,C \subset \M$, if $A \ind_M BC$ and $C \ind_M B$, then $AC \ind_M B$. 
\end{enumerate}
This is a relatively common property of traditional independence relations. 
\begin{remark}
    Let us note that, by \cite[Proposition 3.1.3]{delbee2023axiomatic}, if $\ind$ is an Adler preindependence relation, then $\ind^*$ is also an Adler preindependence relation which, in addition, satisfies right normality.
\end{remark}
\begin{lemma}
    Adler preindependence relations satisfying right normality also satisfy weak transitivity.
\end{lemma}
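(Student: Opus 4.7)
My plan is to deduce $AC \ind_M B$ from $A \ind_M BC$ and $C \ind_M B$ by applying left transitivity over a suitable model extension $M \prec N$ containing $MC$. The idea is: if one can find $N$ with $M \prec N$, $MC \subseteq N$, $N \ind_M B$, and $A \ind_N B$, then left transitivity gives $NA \ind_M B$; since $AC \subseteq NA$, right monotonicity then yields the desired $AC \ind_M B$.

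First I would normalise the data. Left normality turns the hypothesis $C \ind_M B$ into $MC \ind_M B$, and the conclusion $AC \ind_M B$ into the equivalent $MAC \ind_M B$; so one may freely assume that $M$ lies inside every side. To obtain the ingredient $A \ind_N B$, right normality upgrades $A \ind_M BC$ to $A \ind_M MBC$; provided $N \subseteq MBC$, right base monotonicity then yields $A \ind_N MBC$, and right monotonicity trims the right-hand side down to $B$. This leaves the single genuine task: producing a model $N$ with $MC \subseteq N \subseteq MBC$ and $N \ind_M B$, seeded by the independence $MC \ind_M B$ already in hand.

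The main obstacle is this construction of $N$. The axioms of an Adler preindependence relation, even with right normality, do not include left extension, so one cannot directly promote $MC \ind_M B$ to $N \ind_M B$ for a model $N \supseteq MC$. I anticipate that the argument exploits strong finite character in a compactness/inductive-saturation argument inside the monster: at each stage of building $N$ as an elementary chain, the preservation of independence from $B$ over $M$ is verified at the level of formulas via strong finite character, no finite obstruction being possible because any such obstruction would, by strong finite character, already witness $MC \nind_M B$. Once $N$ is in place, the application of left transitivity is routine and monotonicity finishes the proof. If this construction turns out not to be the intended route, I would fall back on a direct contrapositive argument: assume $AC \nind_M B$, extract via strong finite character a formula $\phi(x,y,b)$ satisfied by $(a,c)$ such that every realisation forks, and derive a contradiction from $A \ind_M BC$ (witnessing $\exists y\, \phi(x,y,b)$ for $a$) combined with $C \ind_M B$ (witnessing $\exists x\, \phi(x,y,b)$ for $c$), using right normality to absorb $M$ where needed.
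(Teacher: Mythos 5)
There is a genuine gap, and it sits exactly where you located the ``main obstacle'': the model $N$ you need does not exist in general, and neither of your two requirements on it can be met. First, you need $N \subseteq MBC$ in order to invoke right base monotonicity on $A \ind_M MBC$; but $MBC$ is just a set, and there is no reason whatsoever for it to contain an elementary extension of $M$ containing $C$. Second, even dropping that containment, producing a model $N \supseteq MC$ with $N \ind_M B$ from $MC \ind_M B$ is precisely an instance of left extension, which is \emph{not} among the axioms of an Adler preindependence relation, and strong finite character does not supply it. Your claimed compactness argument breaks at the finite stage: a finite fragment of the type of $N$ asks for witnesses to existential (Tarski--Vaught) closure conditions that are, together with $MC$, independent from $B$ over $M$; realising such a fragment is already left extension for finite tuples. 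A ``finite obstruction'' would only show that $MC$ cannot be \emph{extended} independently, which does not contradict $MC \ind_M B$. (Compare \thref{existence-of-independence-chains}, where exactly this kind of chain is built, but only under an extension hypothesis.) The fallback contrapositive sketch reduces to the same issue: from $a \ind_M bc$ and $c \ind_M b$ one cannot pass to $ac \ind_M b$ without transitivity over the intermediate base $Mc$, which is the content of the lemma itself.

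The intended argument (the paper simply cites Adler's Remark 2.1) never introduces a model at all: it takes the intermediate base to be the \emph{set} $MC$. Concretely, right normality gives $A \ind_M MBC$, base monotonicity over the base $MC \subseteq MBC$ gives $A \ind_{MC} B$ (after right monotonicity); left normality turns $C \ind_M B$ into $MC \ind_M B$; and left transitivity with intermediate base $MC$ yields $AMC \ind_M B$, whence $AC \ind_M B$ by monotonicity. Strong finite character plays no role. The only subtlety is that this uses base monotonicity and transitivity over the non-model base $MC$, i.e.\ the axioms in their arbitrary-base form (as they hold for the examples $\uind$, $\iind$, $\find$, $\dind$); if you insist on reading every base as a model, the statement should be proved in that generality, not by routing through a model $N$ that need not exist.
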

\begin{proof}
    This is \cite[Remark 2.1]{adler2009geometric}.
\end{proof}
\begin{remark}
    \begin{enumerate}[(i)]
        \item The above does not require strong finite character. 
        \item The use of right normality can be avoided if we redefine weak transitivity by: $A \ind_M BCM$ and $C \ind_M BM$ implies $AC \ind_M B$. However, the definition given above, as well as being the original formulation in \cite{kaplan2020kim}, is also more naturally related to transitivity in the usual sense.
    \end{enumerate}
\end{remark}
\begin{example}
    $\uind$, $\iind$, $\find$, and $\dind$ are all Adler preindependence relations satisfying right normality (see \cite{casanovas2011simple}), so by the above, they all satisfy weak transitivity.
\end{example}
The importance of weak transitivity lies in the fact that it allows us to find new Morley sequences out of old ones:
\begin{lemma}\thlabel{weak-transitivity-and-morley-sequences}
    Let $\ind$ be an independence relation satisfying weak transitivity and $n < \omega$. If $(b_i)_{i<\omega}$ is $\ind$-independent over $M$, then so is $(b_{ni}, b_{ni+1}, \dots, b_{(i+1)n-1})_{i<\omega}$. 
\end{lemma}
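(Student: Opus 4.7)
The plan is as follows. Fix $i < \omega$ and write $c_j := (b_{nj}, b_{nj+1}, \dots, b_{(j+1)n-1})$ for each $j < \omega$, so that the tuple $c_{<i}$ enumerates the same set of parameters as $b_{<ni}$. To establish $\ind$-independence of $(c_j)_{j<\omega}$ over $M$ it then suffices to show that $c_i \ind_M b_{<ni}$, i.e., that $(b_{ni}, b_{ni+1}, \dots, b_{ni+n-1}) \ind_M b_{<ni}$ for each $i$.

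I would prove this by a nested induction on $k \in \{0, 1, \dots, n-1\}$, showing that $(b_{ni}, b_{ni+1}, \dots, b_{ni+k}) \ind_M b_{<ni}$ for each such $k$. The base case $k = 0$ is precisely the $\ind$-independence hypothesis $b_{ni} \ind_M b_{<ni}$. For the inductive step, I would apply weak transitivity with $A := b_{ni+k}$, $B := b_{<ni}$, and $C := (b_{ni}, \dots, b_{ni+k-1})$. The premise $A \ind_M BC$ then unfolds to $b_{ni+k} \ind_M b_{<ni+k}$, which is given by $\ind$-independence of the original sequence at index $ni+k$; the premise $C \ind_M B$ is exactly the inductive hypothesis at $k-1$. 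Weak transitivity yields $AC \ind_M B$, i.e., $(b_{ni}, \dots, b_{ni+k}) \ind_M b_{<ni}$, closing the induction at $k$ and, taking $k = n-1$, finishing the proof for this $i$.

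There is no real obstacle here: the statement is arranged precisely so that weak transitivity, as formulated in $(**)$, can be iterated cleanly to accumulate the $n$-block of parameters on the left. The only subtlety worth flagging is that $(**)$ is not symmetric in its hypotheses, so one must be careful that $C$ plays the role of the ``already accumulated'' prefix $b_{ni}, \dots, b_{ni+k-1}$ and $B = b_{<ni}$ the fixed right-hand side; with this alignment, no monotonicity, normality, or symmetry assumption is needed beyond what weak transitivity already provides.
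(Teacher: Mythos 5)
Your proof is correct and follows essentially the same route as the paper: an inner induction on $k$ within each block, using weak transitivity with $B$ the fixed prefix $b_{<ni}$ to accumulate $b_{ni},\dots,b_{ni+k}$ on the left. The paper carries out the inner induction in detail only for the block $i=1$ and then asserts the general case is an easy adaptation, whereas you handle a general $i$ uniformly; the identification of $A \ind_M BC$ with the independence hypothesis at index $ni+k$ is exactly the right alignment of the (asymmetric) premises of $(**)$.
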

\begin{proof}
    Fix $n < \omega$. Let $\bar{b}_i := (b_{in}, b_{in+1}, \dots, b_{(i+1)n-1})$. We prove by induction on $i$ that $\bar{b}_i \ind_M \bar{b}_{<i}$. 

    First, note that, by $\ind$-independence, $b_n \ind_M b_0\dots b_{n-1}$ and $b_{n+1} \ind_M b_0 \dots b_{n}$, so by weak transitivity $b_nb_{n+1} \ind_M b_0 \dots b_{n-1}$. Inductively, for any $1 \leq k \leq n-1$, we have $b_n b_{n+1} \dots b_{n+k} \ind_M b_0 \dots b_{n-1}$, and again by $\ind$-independence, we also have $b_{n+k+1} \ind_M b_0 \dots b_{n+k}$. So, by weak transitivity again, $b_n \dots b_{n+k+1} \ind_M b_0 \dots b_{n-1}$. This proves that $b_n \dots b_{2n-1} \ind_M b_0 \dots b_{n-1}$. 

    Suppose now that we have shown $\bar{b}_i \ind_M \bar{b}_{<i}$ for some $i < \omega$. Then it is easy to adapt the argument from the previous paragraph to show that $\bar{b}_{i+1} \ind_M \bar{b}_{\leq i}$. This concludes the induction.
\end{proof}
Employing this notion, we can turn to the main topic of this section, which is a generalisation of the independence theorem over models in terms of a given independence relation $\ind'$:
\begin{enumerate}
    \item[($\dagger$)] \textit{$\ind'$-independence theorem over models}: If $M \models T$, $a_1 \equiv_M a_2$, $a_1 \ind_M b_1$, $a_2 \ind_M b_2$, and $b_1 \ind'_M b_2$, then there is some $a_*$ such that $a_* \equiv_{Mb_1} a_1$, $\protect{a_* \equiv_{Mb_2} a_2}$, and $a_* \ind_M b_1b_2$. 
\end{enumerate}
This corresponds precisely to what is called ``$\ind^0$-amalgamation over models'' in \cite{delbee2023axiomatic}. It is also clear that we recover \thref{properties-of-independence}(xvii) when we set $\ind'= \ind$. Before relating this notion to witnessing, let us make a few preliminary observations on the interaction between $\ind^+$ and the $\ind$-independence theorem over models. 
\begin{lemma}\thlabel{independence-theorem-and-ind-plus}
    Suppose $\ind^1$ and $\ind^2$ satisfy full existence and monotonicity. If $\ind^1$ satisfies the $\ind^2$-independence theorem, then $\ind^1 \implies (\ind^2)^+$.
\end{lemma}
\begin{proof}
    Suppose that $a \ind^1_M b$. By full existence and monotonicity, we can find an $\ind^2$-Morley sequence $(b_i)_{i < \omega}$ over $M$ with $b_0 = b$. Let $p(x,y) : =\tp(ab/M)$. By assumption, there is some $\sigma \in \Aut(\M/M)$ with $\sigma(b) = b_1$. Let $a' := \sigma(a)$. Then $a' \equiv_M a$, and as $\ind^1$ is an independence relation, $a' \ind_M^1 b_1$. Since $b_1 \ind_M^2 b_0$, it follows from the $\ind^2$-independence theorem that there is some $a''$ such that $a'' \equiv_{Mb_0} a$, $a'' \equiv_{Mb_1} a'$, and $a'' \ind^1_M b_0b_1$. By induction, we can find some $a^*$ such that $a^*b_i \equiv_M ab$ for all $i < \omega$ and $a^* \ind^1_M b_{<\omega}$. In particular, $a^* \models \bigcup_{i < \omega} p(x,b_i)$. Since $(b_i)_{i < \omega}$ was arbitrary, it follows from \thref{dividing-for-complete-types} that $p(x,b)$ is $(\ind^2)^+$-free over $M$, and thus $a \: (\ind^2)^+_M \: b$. 
\end{proof}
\begin{remark}
    \thref{triangle-free-graph} shows that, in general, $\ind^+$ does not satisfy the $\ind$-independence theorem over models. Thus, \thref{independence-theorem-and-ind-plus} cannot be directly used for a characterisation of $\ind^+$ similar to those of $\ind^*$ and $\ind^\sfc$.
\end{remark}
\begin{corollary}
    Suppose $\ind^1$ and $\ind^2$ satisfy full existence and monotonicity. If $\ind^1$-Kim-independence satisfies the $\ind^2$-independence theorem over models, then $\ind^1$-Kim-independence implies $\ind^2$-Kim-independence.
\end{corollary}
\begin{proof}
    If $\ind^1$-Kim-independence satisfies the $\ind^2$-independence theorem, then by \thref{independence-theorem-and-ind-plus} $\ind^1$-Kim-independence $\implies (\ind^2)^+$. Since $\ind^1$-Kim-independence satisfies right extension, $\ind^1$-Kim-independence $\implies ((\ind^2)^+)^*$. But the latter is $\ind^2$-Kim-independence by definition.
\end{proof}
We can now adapt almost word-for-word the proofs of the following important facts to be found in, e.g., \cite{kaplan2020kim}. The main inspiration for this comes from \cite{mutchnik2025nsop}, which constitutes the first generalisation of the independence theorem to a more general version of witnessing outside $\NSOP_1$ theories. 
\begin{lemma}[Chain condition] \thlabel{chain-condition}
    Let $\ind$ be an independence relation satisfying full existence, monotonicity, and weak transitivity. If $\tp(a/Mb)$ does not $\ind$-Kim-divide over $M$ and $I$ is an $\ind$-Morley sequence over $M$ starting at $b$, there is $a'\equiv_{Mb} a$ such that $\tp(a'/MI)$ does not $\ind$-Conant-divide over $M$ and $I$ is $Ma'$-indiscernible. 
\end{lemma}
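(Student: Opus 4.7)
The plan is to combine three ingredients established earlier in the paper: \thref{collapse-of-sequences-and-types} (every $\ind$-Morley sequence sits inside some global $M$-$\ind$-free type), \thref{ind-kim-dividing-characterisation} (the equivalence between non-$\ind$-Kim-dividing and the existence of an $Ma'$-indiscernible copy of $I$), and \thref{weak-transitivity-and-morley-sequences} (the regrouping of a Morley sequence into block-tuples, which rests on weak transitivity).

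First, since $I = (b_i)_{i<\omega}$ is an $\ind$-Morley sequence over $M$ starting at $b$, and $\ind$ satisfies existence, monotonicity, right extension, and quasi-strong finite character, \thref{collapse-of-sequences-and-types} produces a global $M$-$\ind$-free type $q \supset \tp(b/M)$ such that $I$ is a Morley sequence in $q$ over $M$. Feeding this specific $q$ and $I$ into \thref{ind-kim-dividing-characterisation}, the hypothesis that $\tp(a/Mb)$ does not $\ind$-Kim-divide over $M$ produces $a' \equiv_{Mb} a$ such that $I$ is $Ma'$-indiscernible; this settles the indiscernibility part of the conclusion.

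For the non-$\ind$-Conant-dividing part, take any $\phi(x,d) \in \tp(a'/MI)$. Absorbing $M$-parameters into the formula, $d$ may be assumed a finite sub-tuple of $I$; and by padding $d$ with additional $b_i$'s (absorbed as dummy entries in $\phi$), I may further assume $d = (b_0, \ldots, b_{k-1})$ for some $k$. Applying \thref{weak-transitivity-and-morley-sequences} with $n = k$, the blocks $\bar{b}_j := (b_{jk}, \ldots, b_{(j+1)k-1})$ form an $\ind$-independent sequence over $M$; and since $I$ is $Ma'$-indiscernible, so is $(\bar{b}_j)_{j<\omega}$, so in particular this sequence is $M$-indiscernible. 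Thus $(\bar{b}_j)_{j<\omega}$ is an $\ind$-Morley sequence over $M$ starting at $\bar{b}_0 = d$, and since $a' \models \phi(x, \bar{b}_j)$ for every $j$ by $Ma'$-indiscernibility, the set $\{\phi(x, \bar{b}_j) : j < \omega\}$ is consistent. Hence $\phi(x,d)$ does not $\ind$-Conant-divide over $M$; as $\phi$ was arbitrary, $\tp(a'/MI)$ does not $\ind$-Conant-divide over $M$.

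The argument is largely a clean assembly of the cited results; the one place where any insight is required is the block-regrouping step, where we exploit weak transitivity precisely to upgrade the single $Ma'$-indiscernible witness $I$ into a whole family of $\ind$-Morley witnesses (one per block length $k$) collectively handling every finite parameter tuple drawn from $I$.
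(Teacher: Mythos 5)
Your proof is correct and follows essentially the same route as the paper: extract $a'$ via \thref{ind-kim-dividing-characterisation} (using \thref{collapse-of-sequences-and-types} to place $I$ in a global $M$-$\ind$-free type), then regroup $I$ into blocks of consecutive elements via \thref{weak-transitivity-and-morley-sequences} to witness non-$\ind$-Conant-dividing of each formula in $\tp(a'/MI)$. Just note that your final step, passing from consistency along the block $\ind$-Morley sequence to non-$\ind$-Conant-dividing of $\phi(x,d)$, again relies on \thref{collapse-of-sequences-and-types} (this is exactly what the paper invokes when it cites right extension, monotonicity and quasi-strong finite character at that point), so it is worth citing it there explicitly.
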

\begin{proof}
    Note that, by definition, there is $a'\equiv_{Mb} a$ such that $I$ is $Ma'$-indiscernible. By finite character, it suffices to show that $\tp(a'/Mb_{<n})$ does not $\ind$-Conant-divide over $M$ for all $n < \omega$. So fix some $n$. By \thref{weak-transitivity-and-morley-sequences}, $(b_{kn}, b_{kn+1}, \dots, b_{(k+1)n-1})_{k < \omega}$ is $\ind$-Morley over $M$ and clearly $Ma'$-indiscernible. This implies (by definition and \thref{sfc-properties}(ii)) that $\tp(a'/Mb_0\dots b_{n-1})$ does not $\ind$-Conant-divide over $M$. This completes the induction.
\end{proof}
\begin{theorem} \thlabel{wit}
    Let $\ind^1 \implies \ind^2$ be independence relations satisfying existence, monotonicity, and right extension, such that $\ind^1$ satisfies left extension and $\ind^2$ satisfies weak transitivity. Suppose that $\ind^2 \implies \ind^2$-Kim-independence and the latter satisfies symmetry and GUWP w.r.t. $\ind^2$. Then $\ind^2$-Kim-independence satisfies the $\ind^1$-independence theorem over models.
\end{theorem}
\begin{proof}
    We adapt the argument from \cite[Proposition 5.8]{mutchnik2025nsop}, which is itself based on the original version of the weak independence theorem in \cite[Proposition 6.1]{kaplan2020kim}. So let $M\models T$ and suppose that:
    \begin{enumerate}[(i)]
        \item $a_1 \equiv_M a_2$.
        \item $a_1$ is $\ind^2$-Kim-independent from $b_1$ over $M$.
        \item $a_2$ is $\ind^2$-Kim-independent from $b_2$ over $M$. 
        \item $b_1 \ind^1_M b_2$.
    \end{enumerate}
    Let us first note that, by \thref{universal-kim-preserves-implications,compatible-kim-ind} (where we use GUWP to apply the last lemma) together with $\ind^2 \implies \ind^2$-Kim-independence, a formula $\ind^2$-Kim-divides over $M$ iff it $\ind^2$-Conant-divides over $M$. We can also use \thref{rel-wit-implies-kim-fork-eq-kim-div} to show that $\ind^2$-Kim-forking and $\ind^2$-Kim-dividing coincide for formulas.
    \begin{claim}
        There is some $b'_1$ such that $a_1b_1 \equiv_M a_2b_1'$ and $a_2$ is $\ind^2$-Kim-independent from $b_1'b_2$ over $M$.
    \end{claim}
    \begin{proof}[Proof of Claim]
        Since $\ind^2$-Kim-independence is symmetric, it suffices to find $b'_1$ such that $a_1b_1 \equiv_M a_2b'_1$ and $b'_1b_2$ is $\ind^2$-Kim-independent from $a_2$ over $M$. Letting $p(x,a_1) := \tp(b_1/Ma_1)$, by (ii) and symmetry $b_1$ is $\ind$-Kim-independent from $a_1$ over $M$, and so, by (i) and \thref{properties-of-kim-independence}(iii), $p(x,a_2)$ does not contain any formulas $\ind$-Kim-forking over $M$. Hence, it suffices to show the consistency of
        \begin{equation*}
            p(x,a_2) \cup \{\neg \phi(x,b_2^*,a_2^*) : b_2^* \subseteq b_2,\: a_2^* \subseteq a_2, \: \phi(x,y,a_2^*) \: {\ind}^2\text{-Kim-forks over } M\}.
        \end{equation*}
        So assume, for contradiction, that the above set is inconsistent. By compactness, we have that $p(x,a_2) \vdash \phi(x,b_2^*,a_2^*)$ for some finite $b_2^* \subseteq b_2$, $a_2^* \subseteq a_2$ and a formula $\phi(x,y,a_2^*)$ that $\ind^2$-Kim-forks over $M$, and hence $\ind^2$-Kim-divides over $M$. 

        By (iii) and symmetry, $b_2$ is $\ind^2$-Kim-independent from $a_2$ over $M$. So, by definition, full existence and monotonicity for $\ind^2$, there is an $\ind^2$-Morley sequence $(a_{2,i})_{i\in\omega}$ over $M$ starting at $a_2$ which is $Mb_2$-indiscernible. Hence, 
        \begin{equation*}
            \bigcup_{i<\omega} p(x,a_{2,i}) \vdash \{\phi(x,b_2^*,a_{2,i}^*) : i < \omega\}.
        \end{equation*}
        (As usual, $a_{2,i}^*$ denotes the restriction of $a_{2,i}$ to the appropriate variables.) Since $p(x,a_2)$ does not contain any formulas $\ind^2$-Kim-dividing over $M$, by compactness and our choice of $(a_{2,i})_{i<\omega}$, we know $\bigcup_{i<\omega} p(x,a_{2,i})$ is consistent. Therefore, $\{\phi(x,y,a_{2,i}^*) : i <\omega\}$ is consistent. But this is a contradiction, since by assumption and the paragraph before the claim $\phi(x,y,a_2^*)$ $\ind^2$-Conant-divides over $M$.
    \end{proof}
    Let $p_1(x,b_1) := \tp(a_1/Mb_1)$. We want to show there is $a \models p_1(x,b_1) \cup \tp(a_2/Mb_2)$ such that $a$ is $\ind^2$-Kim-independent from $b_1b_2$ over $M$. By (iv) and right extension for $\ind^1$, there is $b_1'' \equiv_{Mb_2} b_1$ such that $b_1'' \ind_M^1 b'_1b_2$. It thus suffices to show that $p_1(x,b''_1) \cup \tp(a_2/Mb_2)$ has a realisation $a$ that is $\ind^2$-Kim-independent from $b_1''b_2$ over $M$. 

    Since $b_1'' \equiv_M b_1 \equiv_M b'_1$, by (iv) and left extension for $\ind^1$, we can find some $b_2'$ such that $b''_1b'_2 \equiv_M b_1'b_2$ and $b''_1b'_2 \ind^2_M b_1'b_2$ (since $\ind^1 \implies \ind^2$). By right extension for $\ind^2$, it follows that $b'_1b_2, b_1''b_2'$ begin an $\ind^2$-Morley sequence over $M$. So by \thref{chain-condition}, monotonicity and an automorphism, there is some $a \equiv_{Mb'_1b_2} a_2$ such that $a$ is $\ind^2$-Conant-independent from $b''_1b_2$ over $M$. Hence, by the first paragraph of the proof, $a$ is $\ind^2$-Kim-independent from $b''_1b_2$ over $M$. Since $a \equiv_{Mb_2} a_2$, we have that $a \models \tp(a_2/Mb_2)$, and since $ab''_1 \equiv_M ab'_1 \equiv_M a_2b'_1 \equiv_M a_1b_1$ by the Claim, we have $a \models p_1(x,b''_1)$.
\end{proof}
\begin{remark}
    Since the compatibility condition is clear in this case, note that, by \thref{symmetry-characterisation}, we can slightly weaken the above result by replacing ``$\ind^2$-Kim-independence satisfies GUWP w.r.t. $\ind^2$'' in the statement with (the oftentimes easier-to-check) ``$\ind^2$-Kim-independence satisfies left transitivity''.
\end{remark}
\begin{example}
    We again recover several results from the literature:
    \begin{enumerate}[(i)]
        \item If $T$ is simple, then by \thref{find-symmetry-simple} $\find$ is symmetric and thus satisfies left and right extension. Since $\find$ always satisfies weak transitivity, the above recovers the independence theorem over models in simple theories \cite[Theorem 3.5]{kim1997simple}.
        \item If $T$ is $\NSOP_1$ and we take $\ind^1 = \uind$ and $\ind^2 = \iind$, this recovers \cite[Proposition 6.1]{kaplan2020kim}.
        \item If $T$ is $\NSOP_2$ ($= \NSOP_1$ by \cite{mutchnik2025nsop}) and we take $\ind^1 = \ind^2$ to be canonical coheir independence, this recovers \cite[Proposition 5.7]{mutchnik2025nsop}.
        \item If $T$ is a free amalgamation theory, there is a ``free amalgamation relation'' $\ind$ in the sense of Conant \cite{conant2017axiomatic}. By definition, such a relation always satisfies existence, monotonicity, left and right extension, weak transitivity, and symmetry (among other properties). Moreover, since it is stationary, $\ind$ satisfies GUWP w.r.t. $\ind$. Therefore, the above recovers the version of the independence theorem for free amalgamation theories that appears in the introduction of \cite{mutchnik2025nsop}.
    \end{enumerate}
    Observe that, in (ii) and (iii), we need to assume symmetry of Kim-independence \cite[Theorem 5.16]{kaplan2020kim} and $\uind$-Conant-independence \cite[Theorem 5.4]{mutchnik2025nsop} in their respective contexts, which we cannot (yet) recover abstractly only from \thref{symmetry-characterisation,examples-of-uwp}.
\end{example}
There is a slight modification of the assumptions of the theorem above that provides a sufficient condition for $\NSOP_1$:
\begin{proposition} \thlabel{independence-theorem}
    Let $\ind$ be an independence relation satisfying full existence and monotonicity. Suppose that $\ind$-Kim-independence satisfies left and weak transitivity and GUWP w.r.t. $\ind$. Suppose further that $\ind \implies \ind$-Kim-independence. Then $T$ is $\NSOP_1$ and $\ind$-Kim-independence $=\kind$.
\end{proposition}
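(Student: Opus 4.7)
The plan is to show that $\ind_0 := \ind$-Kim-independence is a well-behaved abstract independence relation that satisfies a weak independence theorem over models, then invoke the cited Chernikov--Ramsey result (the remark following \thref{wit}) to conclude $\NSOP_1$, and finally identify $\ind_0$ with $\kind$ through a Kim--Pillay uniqueness argument for $\NSOP_1$.

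First, I would assemble the properties of $\ind_0$. By \thref{symmetry-characterisation}, combined with the assumed left transitivity of $\ind_0$ and its GUWP w.r.t.\ $\ind$, $\ind_0$ is symmetric. Then \thref{mon-and-strong-fin-char-are-preserved} supplies monotonicity, strong finite character, existence, right extension, and normality, and combined with right monotonicity this upgrades to full existence. The GUWP hypothesis, together with \thref{rel-wit-implies-kim-fork-eq-kim-div} and \thref{left-extension-when-kim-fork-eq-kim-div}, provides left extension, and \thref{symmetry-implies-local-character} yields chain local character. Symmetry and left transitivity give right transitivity. Thus $\ind_0$ satisfies the full structural package needed to serve as the independence relation in \thref{wit}.

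The heart of the argument is to apply \thref{wit} with $\ind_0$ playing the role of $\ind$. The structural hypotheses on $\ind_0$ itself are in place by the previous step, including weak transitivity, which is assumed. The other hypotheses concern $\ind_0$-Kim-independence, and the cleanest way to handle them is by establishing the identification $\ind_0 = \ind_0$-Kim-independence. One direction follows from \thref{weak-symmetry} applied to $\ind_0$ combined with symmetry, yielding $\ind_0 \implies \ind_0$-Conant-independence; the reverse direction, which also subsumes the coincidence of $\ind_0$-Kim-dividing with $\ind_0$-Conant-dividing, reduces to showing that $\ind_0$ satisfies its own GUWP. I expect this to follow by transferring the assumed GUWP w.r.t.\ $\ind$ along the type-definability afforded by quasi-strong finite character and the Morley-extraction machinery of \thref{collapse-of-sequences-and-types}, which lets one pass between arbitrary $\ind_0$-Morley sequences and those sitting inside global $M$-$\ind_0$-free extensions. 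Once the identification is in place, the three hypotheses of \thref{wit} reduce to properties already established, and we obtain the weak independence theorem for $\ind_0$.

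The conclusion then follows in two moves. By the remark after \thref{wit} (citing \cite[Theorem 5.1]{chernikov2016model}), the weak independence theorem for $\ind_0$ forces $T$ to be $\NSOP_1$. Since $\ind_0$ is an $\Aut(\M)$-invariant ternary relation on small subsets of $\M$ over models satisfying strong finite character, full existence, monotonicity, symmetry, and (in $\NSOP_1$) the full independence theorem, the Kim--Pillay theorem for $\NSOP_1$ in the style of Kaplan--Ramsey identifies $\ind_0$ with $\kind$. The main obstacle is the identification $\ind_0 = \ind_0$-Kim-independence in the second step; concretely, the verification that $\ind_0$ satisfies GUWP with respect to itself, which is where the witnessing hypothesis w.r.t.\ $\ind$ must be leveraged against the internal structure of $\ind_0$-free global types.
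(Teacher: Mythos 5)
Your overall strategy (reduce to an independence theorem for $\ind$-Kim-independence, then quote a Kim--Pillay-style criterion) has the right shape, and your first step, assembling symmetry via \thref{symmetry-characterisation}, left extension, chain local character, etc.\ for $\ind_0 := \ind$-Kim-independence, is fine. The genuine gap is exactly the step you flag: applying \thref{wit} with $\ind_0$ in the role of $\ind$ forces you through the double-relativised relation $\ind_0$-Kim-independence, and the identification $\ind_0 = \ind_0$-Kim-independence hinges on $\ind_0$ satisfying GUWP with respect to $\ind_0$, which none of the cited machinery delivers. Transferring the assumed GUWP w.r.t.\ $\ind$ to GUWP w.r.t.\ $\ind_0$ via \thref{relative-dominating-between-relations} would need $\ind_0 \implies \ind$ (so that $\ind_0$-Kim-dividing implies $\ind$-Kim-dividing through \thref{implications-between-relative-kims}), which is false in general (Kim-independence does not imply $\iind$); and \thref{collapse-of-sequences-and-types} only converts an $\ind_0$-Morley sequence into a Morley sequence in a global $M$-$\ind_0$-free type --- it does not show that dividing along such a sequence implies $\ind$-Kim-dividing, which is the converse witnessing statement you actually need. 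In the prototypical case $\ind = \iind$ in an $\NSOP_1$ theory, that statement is precisely the Kaplan--Ramsey theorem that $\kind$-Morley sequences witness Kim-dividing, whose proof uses the independence theorem itself; so as sketched your route is circular. Note also that \thref{strengthening-witnesses}, the paper's closest result in this direction, only yields GUWP of $\ind$-Kim-independence w.r.t.\ $\ind$ (already assumed here), not w.r.t.\ itself.

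The paper sidesteps the double relativisation entirely: it does not apply \thref{wit} as a black box, but re-runs its proof keeping the original $\ind$ as the dividing notion, observing that (a) the Claim in that proof still goes through, (b) left extension of $\ind_0$ (from symmetry plus \thref{mon-and-strong-fin-char-are-preserved}, or GUWP plus \thref{mon-and-strong-fin-for-universal-kim}) replaces the $\ind^{\textnormal{le}}$ hypothesis, and (c) the assumed weak transitivity of $\ind_0$ allows \thref{chain-condition} to be applied to $\ind_0$ directly, with GUWP and \thref{compatible-kim-ind} translating non-$\ind_0$-Conant-dividing back into non-$\ind$-Kim-dividing. This yields the independence theorem over models for $\ind_0$, and then a single appeal to \cite[Theorem 6.11]{kaplan2021transitivity} (strong finite character, existence, monotonicity, symmetry, independence theorem, witnessing) gives both $\NSOP_1$ and $\ind_0 = \kind$ at once, in place of your separate appeals to \cite[Theorem 5.1]{chernikov2016model} and a Kim--Pillay uniqueness argument. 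To salvage your version you would have to supply an independent proof that dividing along Morley sequences of global $M$-$\ind_0$-free types implies $\ind$-Kim-dividing; absent that, the argument does not close.
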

\begin{proof}
    First, we claim that, given our assumptions, $\ind$-Kim-independence satisfies the independence theorem over models (as in \thref{properties-of-independence}(xvii)). Note that the first part of the previous proof (including the claim) still goes through. For the second part, simply note that $\ind$-Kim-independence satisfies left (and right) extension. To show this, we can either use symmetry (which follows from \thref{symmetry-characterisation}) and \thref{mon-and-strong-fin-char-are-preserved}, or GUWP and \thref{mon-and-strong-fin-for-universal-kim}. Moreover, since we assumed weak transitivity for $\ind$-Kim-independence, we can still apply \thref{chain-condition}. So the argument works. 

    Now, observe that $\ind$-Kim-independence satisfies strong finite character, existence, and monotonicity by \thref{mon-and-strong-fin-char-are-preserved}, symmetry by \thref{symmetry-characterisation}, and the independence theorem over models by the previous paragraph. Hence, together with GUWP w.r.t $\ind$, \cite[Theorem 6.11]{kaplan2021transitivity} implies that $T$ is $\NSOP_1$ and $\ind$-Kim-independence and $\kind$ coincide.
\end{proof}
With this proposition, we can also devise a new criterion for a certain subclass of $\NSOP_1$ theories that does not explicitly invoke any witnessing properties. For this, we need a few intermediate lemmas that use weak transitivity.
\begin{lemma}
    Let $\ind$ be an independence relation satisfying full existence, monotonicity, and weak transitivity. If $\phi(x,b)$ $\ind$-Kim-divides over $M$, then it $\ind^{\textnormal{opp}}$-Kim-divides over $M$. 
    
    In particular, $\ind^{\textnormal{opp}}$-Kim-independence implies $\ind$-Kim-independence.
\end{lemma}
\begin{proof}
    Suppose there is $\phi(x,b)$ and an $\ind$-Morley sequence $(b_i)_{i < \omega}$ over $M$ with $b_0 = b$ such that $\{\phi(x,b_i) : i < \omega\}$ is inconsistent, and so $k$-inconsistent for some $k$ by $M$-indiscernibility. By \cite[Claim 7]{delbee2023axiomatic}, there exists some $M$-indiscernible sequence $(b'_i)_{i < \omega}$ such that $b'_0 \dots b'_n \equiv_M b_n \dots b_0$ for all $n < \omega$. In particular, the set $\{\phi(x, b'_i) : i < \omega\}$ is also $k$-inconsistent. 

    It remains to show that $(b'_i)_{i < \omega}$ is $\ind^{\textnormal{opp}}$-independent over $M$. For any $n < \omega$, we have $b'_0 \ind_M b'_1 \dots b'_n$, so by $M$-indiscernibility $b'_1 \ind_M b'_2 \dots b'_{n+1}$ and thus by right monotonicity $b'_1 \ind_M b'_2 \dots b'_n$. Hence, by weak transitivity, $b'_0b'_1 \ind_M b'_2 \dots b'_n$. Proceeding inductively, we obtain that $b'_0\dots b'_{n-1} \ind_M b'_n$, as required.
\end{proof}
\begin{example}
    If a formula divides along a $\uind$-Morley sequence over a model $M \models T$, then it also divides along an $\hind$-Morley sequence over $M$.
\end{example}
\begin{lemma}\thlabel{guwp-and-opp-weight}
    Let $\ind^1$ and $\ind^2$ be independence relations satisfying full existence and monotonicity. Assume that $\ind^1$ also satisfies weak transitivity. Suppose that, for every $M \models T$, every $(\ind^1)^{\textnormal{opp}}$-independent sequence $(a_i)_{i<\size{T}^+}$ over $M$, and any finite tuple $b$, there is some $i < \size{T}^+$ such that $\tp(b/Ma_i)$ does not $\ind^2$-Kim-divide over $M$. Then $\ind^1$ satisfies GUWP w.r.t. $\ind^2$.
\end{lemma}
\begin{proof}
    Same proof as in \thref{guwp-and-weight}((ii) $\Rightarrow$ (i)), noting that we may extract and extend an $(\ind^1)^{\text{opp}}$-Morley sequence using the claim from the previous proof.
\end{proof}
\begin{proposition} \thlabel{local-character-and-guwp}
    Let $\ind$ be an independence relation satisfying full existence and monotonicity. Suppose that $\ind$-Kim-independence is compatible with $\ind$ and satisfies weak and left transitivity, and left extension. The following are equivalent:
    \begin{enumerate}[(i)]
        \item $\ind$-Kim-independence satisfies GUWP w.r.t. $\ind$. 
        \item $\ind$-Kim-independence satisfies chain local character. 
    \end{enumerate}
\end{proposition}
\begin{proof}
    ((i) $\Rightarrow$ (ii)) This follows by left transitivity, \thref{symmetry-characterisation,symmetry-implies-local-character}. 

    ((ii) $\Rightarrow$ (i)) We use \thref{guwp-and-opp-weight}. So let $M \models T$ be a model and $b$ be a finite tuple. Let $(a_i)_{i < \size{T}^+}$ be such that $a_{<i}$ is $\ind$-Kim-independent from $a_i$ over $M$. Observe that $M$ is $\ind$-Kim-independent from $a$ over $M$. Thus, by left extension for $\ind$-Kim-independence, we can find a continuous chain of models $(M_i)_{i < \size{T}^+}$ such that $M \subseteq M_i$, $a_{<i} \subset M_i$, and $M_i$ is $\ind$-Kim-independent from $a_i$ over $M$ for all $i < \size{T}^+$. 

    By chain local character, we can find some $i < \size{T}^+$ such that $b$ is $\ind$-Kim-independent from $M_{\size{T}^+} := \bigcup_{j < \size{T}^+} M_j$ over $M_i$. Thus, by right monotonicity, $b$ is $\ind$-Kim-independent from $a_i$ over $M_i$. Therefore, by left transitivity, it follows that $b$ is $\ind$-Kim-independent from $a_i$ over $M$. 
\end{proof}
\begin{corollary}\thlabel{subclass-of-nsop1-characterisation}
    Let $\ind$ be an independence relation satisfying full existence and monotonicity. Suppose that $\ind$-Kim-independence satisfies weak and left transitivity, left extension, and chain local character. Suppose further that $\ind \implies \ind$-Kim-independence. Then $T$ is $\NSOP_1$ and $\ind$-Kim-independence $= \kind$.
\end{corollary}
\begin{proof}
    Combine \thref{independence-theorem,local-character-and-guwp}.
\end{proof}
This imposes some restrictions on any potential converse to \thref{symmetry-implies-local-character}.

Returning to the $\ind'$-independence theorem over models, we can adapt the argument in \cite[Proposition 6.3]{dobrowolski2022independence} to obtain a partial converse to \thref{wit}:
\begin{theorem} \thlabel{wit-and-guwp}
    Let $\ind^1 \implies \ind^2$ be independence relations satisfying full existence and monotonicity such that $\ind^1$ also satisfies left and right extension. If $\ind^2$-Conant-independence satisfies the $\ind^2$-independence theorem over models, then $\ind^1$ satisfies GUWP w.r.t. $\ind^2$. 
\end{theorem}
\begin{proof}
    Assume, for contradiction, that $\ind^1$ does not satisfy GUWP w.r.t. $\ind^2$. So there is a formula $\phi(x,a)$, an $\ind^1$-Morley sequence $(a_i)_{i < \omega}$ over $M$ and an $\ind^2$-Morley sequence $(b_i)_{i < \omega}$ over $M$ such that $a_0 = b_0$, $\{\phi(x,a_i) : i < \omega\}$ is consistent, and $\{\phi(x,b_i) : i < \omega\}$ is inconsistent. Pick $c \models \{\phi(x,a_i) : i < \omega\}$ such that $(a_i)_{i<\omega}$ is $Mc$-indiscernible, which implies using $a_0 = b_0$ and \thref{univ-kim-forking-iff-univ-kim-dividing,universal-kim-preserves-implications} that $c$ is $\ind^2$-Conant-independent from $b_0$ over $M$. 

    By indiscernibility, there is some $N< \omega$ such that $\{\phi(x, b_i) : i \leq N\}$ is inconsistent. We will find by induction on $k \leq N$ some $c_k \models \{\phi(x,b_i) : 0 \leq i \leq k\}$ such that $c_i$ is $\ind^2$-Conant-independent from $b_{\leq k}$ over $M$, which will give us the required contradiction. 
    
    For the base case, we can take $c_0 = c$. For the inductive step, suppose that we have found $c_k \models \{\phi(x, b_i) : i \leq k\}$ such that $c_k$ is $\ind^2$-Conant-independent from $b_{\leq k}$ over $M$. Since $b_{k+1} \equiv_M b_k$, we can pick $\sigma \in \Aut(\M/M)$ such that $\sigma(b_k) = b_{k+1}$ and let $c' := \sigma(c)$, so that $c' \equiv_M c_k$. Hence, by invariance, $c'$ is $\ind^2$-Conant-independent from $b_{k+1}$ over $M$. Moreover, by choice, $b_{k+1} \ind^2_M b_{\leq k}$, and by induction hypothesis, $c_k$ is $\ind^2$-Conant-independent from $b_{\leq k}$ over $M$. Therefore, by the $\ind^2$-independence theorem over models, there exists some $c_*$ such that $c_* \equiv_{Mb_{\leq k}} c_k$, $c_* \equiv_{Mb_{k+1}} c'$, and $c_*$ is $\ind^2$-Conant-independent from $b_{\leq k+1}$ over $M$. Moreover, $c_* \models \{\phi(x,b_i) : i \leq k+1\}$. So we let $c_{k+1} := c_*$. This completes the induction. 
\end{proof}
With this lemma, we can obtain an analogue to \cite[Proposition 5.2]{chernikov2016model}. The innovation here is that, while Chernikov and Ramsey's proof involves the construction of a tree and the syntactic properties of $\SOP_1$, our proof is completely ``semantic'' and only involves properties of abstract independence relations (modulo the equivalence of $\NSOP_1$ and Kim's Lemma for coheir Morley sequences):
\begin{proposition} \thlabel{conant-independence-satisfying-wit-implies-nsop1}
    Let $\uind \implies \ind$ be an independence relation. If $\ind$-Conant-independence satisfies the $\ind$-independence theorem over models, then $T$ is $\NSOP_1$.  
\end{proposition}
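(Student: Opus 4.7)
The plan is to apply \thref{wit-and-guwp} with $\ind^1 := \uind$ and $\ind^2 := \ind$, and then to promote the resulting witnessing property by means of \thref{relative-dominating-between-relations} until it matches the statement known to characterise $\NSOP_1$.

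First I would verify the hypotheses of \thref{wit-and-guwp}. By \thref{ind-relations}, $\uind$ satisfies full existence, monotonicity, left and right extension, and strong finite character (which is stronger than quasi-strong finite character). Full existence for $\ind$ is free: any $A' \equiv_M A$ with $A' \uind_M B$ also satisfies $A' \ind_M B$ because $\uind \implies \ind$. The hypothesis on the weak independence theorem for $\ind$-Conant-independence is assumed. Hence \thref{wit-and-guwp} applies and gives that $\uind$ satisfies GUWP with respect to $\ind$.

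Next I would apply \thref{relative-dominating-between-relations} with $\ind^1 = \ind^2 = \uind$ and $\ind^3 = \uind$, $\ind^4 = \ind$. The nontrivial implication $\uind \implies \ind$ is exactly what is given, and the content is the upgrade in \thref{implications-between-relative-kims}: any formula that $\uind$-Kim-divides over $M$ also $\ind$-Kim-divides there, so the coheir Morley sequences witnessing $\ind$-Kim-dividing automatically witness $\uind$-Kim-dividing. We conclude that $\uind$ satisfies GUWP with respect to $\uind$, which is exactly the statement that coheir Morley sequences uniformly witness $\uind$-Kim-dividing, i.e., Kim's Lemma for coheir Morley sequences.

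Finally, invoking the cited equivalence between $\NSOP_1$ and Kim's Lemma for coheir Morley sequences (the sole ingredient retained from \cite[Proposition 5.2]{chernikov2016model}), it follows that $T$ is $\NSOP_1$. The main obstacle, such as it is, has already been absorbed into \thref{wit-and-guwp}: the present proposition then reduces to a short bookkeeping argument that composes the abstract witnessing upgrades so the conclusion meets the classical characterisation at the level of coheir independence.
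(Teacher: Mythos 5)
Your proposal is correct and follows essentially the same route as the paper: apply \thref{wit-and-guwp} with $\ind^1 = \uind$, $\ind^2 = \ind$ to get GUWP of $\uind$ w.r.t.\ $\ind$, upgrade via \thref{relative-dominating-between-relations} and $\uind \implies \ind$ to GUWP of $\uind$ w.r.t.\ $\uind$, and conclude by the known equivalence of $\NSOP_1$ with Kim's Lemma for coheir Morley sequences. Your explicit verification of the hypotheses of \thref{wit-and-guwp} (in particular that full existence for $\ind$ is inherited from $\uind$) is a worthwhile addition that the paper leaves implicit.
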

\begin{proof}
    By \thref{wit-and-guwp}, $\uind$ satisfies GUWP w.r.t. $\ind$. As $\uind \implies \ind$, by \thref{relative-dominating-between-relations} $\uind$ satisfies GUWP w.r.t. $\uind$. Therefore, by \cite[Theorem 3.16]{kaplan2020kim}, $T$ is $\NSOP_1$.
\end{proof}
\begin{corollary} \thlabel{conant-coheir-independence}
    The following are equivalent:
    \begin{enumerate}[(i)]
        \item $T$ is $\NSOP_1$.
        \item Coheir-Conant-independence satisfies the independence theorem over models. 
        \item Conant-independence satisfies the independence theorem over models.
    \end{enumerate}
    In this case, coheir-Conant-independence, Conant-independence, and $\kind$ coincide.
\end{corollary}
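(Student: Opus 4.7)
The plan is to split the cycle into (i) $\Leftrightarrow$ (ii) and (i) $\Leftrightarrow$ (iii), and read off the final coincidence claim as a byproduct.

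For (ii) $\Rightarrow$ (i) and (iii) $\Rightarrow$ (i), the aim is to invoke \thref{conant-independence-satisfying-wit-implies-nsop1} with $\ind := \uind$ and $\ind := \iind$ respectively (using $\uind \implies \uind$ trivially and $\uind \implies \iind$ for the second case). The only subtlety is that \thref{conant-independence-satisfying-wit-implies-nsop1} requires the ad hoc ``weak independence theorem over models with respect to $\ind^2$'' of \thref{wit-and-guwp}, whereas (ii), (iii) give the full independence theorem over models (property (xvii)) applied to $\ind^2$-Conant-independence. To bridge this, I will observe that, for $\ind^2 \in \{\uind, \iind\}$, the chain $\ind^2 \implies \find \implies \ind^2\text{-Kim-independence} \implies \ind^2\text{-Conant-independence}$ holds by \thref{find-stronger-than-relative-kim} and \thref{ind-kim-implies-universal-ind-kim}. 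In particular, $b_2 \ind^2_M b_1$ implies $b_2$ is $\ind^2$-Conant-independent from $b_1$ over $M$, so the full independence theorem instance of (xvii) immediately gives the required ``weak'' form, and \thref{conant-independence-satisfying-wit-implies-nsop1} applies.

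For the reverse direction (i) $\Rightarrow$ (ii), (iii), assume $T$ is $\NSOP_1$. By \thref{examples-of-uwp}(iii) together with the equivalence of UWP and GUWP under quasi-strong finite character (\S4), $\iind$ satisfies GUWP with respect to itself. Since $\iind$ is trivially compatible with itself, \thref{compatible-kim-ind} yields $\iind$-Kim-independence $=$ $\iind$-Conant-independence, which by definition is $\kind$. In parallel, by the classical Kaplan--Ramsey--Shelah result that coheir Morley sequences witness Kim-dividing in $\NSOP_1$ theories, $\uind$ satisfies GUWP with respect to $\iind$; since $\uind \implies \iind$ these relations are compatible, and a second application of \thref{compatible-kim-ind} gives $\uind$-Conant-independence $=$ $\iind$-Kim-independence $= \kind$. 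As $\kind$ is known to satisfy the independence theorem over models in any $\NSOP_1$ theory (Kaplan--Ramsey), both (ii) and (iii) follow. The ``in this case'' coincidence of Conant-coheir-independence, Conant-independence, and $\kind$ is precisely the chain of equalities established in this paragraph.

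The main obstacle is purely bookkeeping, namely formalising the reduction in the first paragraph from the independence theorem as stated in (ii), (iii) to the ad hoc weak-independence-theorem hypothesis used in \thref{conant-independence-satisfying-wit-implies-nsop1}; once this is done, everything else is an assembly of witnessing and compatibility facts already collected in \S3 and \S4.
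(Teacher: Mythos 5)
Your proposal is correct and follows essentially the route the paper intends: the directions (ii),(iii) $\Rightarrow$ (i) are exactly applications of \thref{conant-independence-satisfying-wit-implies-nsop1} (with $\ind = \uind$, $\iind$), where your bridge from the full independence theorem to the ad hoc weak form via $\ind \implies \ind$-Conant-independence is the small detail the paper leaves implicit, and (i) $\Rightarrow$ (ii),(iii) together with the coincidence of Conant-coheir-independence, Conant-independence and $\kind$ is the expected assembly of \thref{examples-of-uwp}(iii), \thref{relative-dominating-between-relations}/Kim's Lemma for invariant (hence coheir) Morley sequences, \thref{compatible-kim-ind}, and the Kaplan--Ramsey independence theorem for $\kind$. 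No gaps.
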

\begin{remark}
    Let us note that, in the proof of \thref{wit-and-guwp}, we only need left extension of $\ind^1$ to obtain the equivalence between $\ind^1$-Conant-forking and $\ind^1$-Conant-dividing. Thus, if we redo the proof only working with non-$\ind^2$-Conant-dividing, we can show the equivalence of the following two semantically:
    \begin{enumerate}[(i)]
        \item $\iind$ satisfies GUWP w.r.t. $\iind$. 
        \item Non-Conant-dividing independence satisfies the independence theorem over models. 
    \end{enumerate}
    Of course, once we can appeal to the syntactic features of $\NSOP_1$ theories, this is again equivalent to the conditions from the above results.
\end{remark}
\begin{example}
    We have seen before that, if $T$ is a free amalgamation theory with relation $\ind$, then Conant-independence satisfies the $\ind$-independence theorem over models, and there are strictly $\NSOP_4$ such theories. In particular, as any free amalgamation relation satisfies left extension, Conant-independence fails the independence theorem in any strictly $\NSOP_4$ free amalgamation theory.
\end{example}
\section{Dichotomies}\label{sec:dichotomies}
In this section, we will prove a result that illustrates how we can use the witnessing framework developed in \S\ref{sec:witnessing} to prove some old and new dichotomies across dividing lines. A precedent to this type of result can be found in the proofs in \cite{dmitrieva2023dividing} of the facts that simple theories are either stable or $\IP$, and $\NSOP_1$ theories are either simple or $\TP_2$. Although these results, in their original form, trace back to Shelah, the proofs in \cite{dmitrieva2023dividing} use semantic arguments involving Kim-independence to show this, in contrast to the original, combinatorial proofs. In what follows, we will prove an extension of these dichotomies that covers both of them and has new applications within the context of $\NSOP_4$ theories.

The key lemma is the following technical result:
\begin{lemma} \thlabel{collapse-of-ordering}
    Let $\ind^1$, $\ind^2$ and $\ind^3$ be independence relations satisfying full existence and monotonicity. Suppose that:
    \begin{enumerate}[(i)]
        \item Kim-strict $\ind^2$ satisfies full existence. 
        \item $\ind^2 \implies \ind^3$. 
        \item $\ind^1$ satisfies GUWP w.r.t. $\ind^2$.
        \item Kim-strict $\ind^3$ satisfies GUWP w.r.t $\ind^1$.
    \end{enumerate}
    Then $\leq_1$ is trivial.
\end{lemma}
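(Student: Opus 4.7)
The plan is to reduce triviality of $\leq_1$ to the chain of implications: $\ind^1$-Kim-dividing $\Rightarrow$ $\ind^2$-Kim-dividing $\Rightarrow$ dividing along every $\ind^1$-Morley sequence. The second implication is exactly (iii), so the whole argument consists of producing the first implication. Once this is achieved, $\ind^1$ satisfies GUWP (and hence UWP) with respect to itself, which is precisely triviality of $\leq_1$.

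The main construction is a single Morley sequence that simultaneously realises several independence notions. By (i) and \thref{conjunction-props}, the relation $\ind^1 \wedge \textnormal{Kim-strict } \ind^2$ has full existence, so for any $M \models T$ and any finite tuple $b$ there is a global $M$-$(\ind^1 \wedge \textnormal{Kim-strict } \ind^2)$-free extension $q$ of $\tp(b/M)$. Let $(b_i)_{i<\omega}$ be an $M$-indiscernible Morley sequence in $q$. By \thref{morleys-around} applied to each conjunct, $(b_i)$ is simultaneously $\ind^1$-Morley and Kim-strict $\ind^2$-Morley over $M$. Since $\ind^2 \implies \ind^3$ by (ii), \thref{strictness-props}(vi) gives Kim-strict $\ind^2 \implies \textnormal{Kim-strict } \ind^3$, so $(b_i)$ is also Kim-strict $\ind^3$-Morley.

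Now suppose $\phi(x,b)$ $\ind^1$-Kim-divides over $M$. Applying (iv), Kim-strict $\ind^3$ has GUWP w.r.t. $\ind^1$, and since $(b_i)$ is Kim-strict $\ind^3$-Morley starting at $b$, the set $\{\phi(x,b_i) : i < \omega\}$ is inconsistent. On the other hand, Kim-strict $\ind^2 \implies \ind^2$ by \thref{strictness-props}(i), so $q$ is in particular an $M$-$\ind^2$-free global extension of $\tp(b/M)$, and $(b_i)$ is an $M$-indiscernible Morley sequence in $q$ witnessing the inconsistency. By definition this means $\phi(x,b)$ $\ind^2$-Kim-divides over $M$. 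Assumption (iii) then yields that $\phi$ divides along every $\ind^1$-Morley sequence starting at $b$, which is the desired triviality.

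I do not anticipate any serious obstacle: the argument is entirely modular and consists of invocations of earlier structural results. The one conceptual insight is the use of compatibility (i) to produce a sequence that is simultaneously $\ind^1$-Morley and Kim-strict $\ind^3$-Morley, which is precisely the bridge between the input of (iv) and the input of (iii). In particular, no quasi-strong finite character is needed because we never have to reconstruct a global $\ind^2$-free type from a mere $\ind^2$-Morley sequence: the type $q$ is built in advance and retains its $M$-$\ind^2$-freeness for free.
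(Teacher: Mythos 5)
Your proof is correct and follows essentially the same route as the paper: use (iv) to get inconsistency of $\phi(x,b_i)$ along a Morley sequence of a suitably free global extension of $\tp(b/M)$ (via (i) and (ii)), conclude $\ind^2$-Kim-dividing, and then apply (iii) to get $\ind^1$-Conant-dividing, i.e.\ triviality of $\leq_1$. The only (harmless) difference is that you take the global type free for $\ind^1 \wedge$ Kim-strict $\ind^2$ using compatibility, whereas the paper only needs full existence of Kim-strict $\ind^2$ itself --- the $\ind^1$-Morleyness of your sequence is never actually used.
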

\begin{proof}
    Assume $\phi(x,b)$ $\ind^1$-Kim-divides over $M$. By (iv), it follows that $\phi(x,b)$ Kim-strict $\ind^3$-Conant-divides over $M$. By (i), $\tp(b/M)$ has a global Kim-strict $M$-$\ind^2$-free extension, which by (ii) is also $M$-$\ind^3$-free. Thus, $\phi(x,b)$ $\ind^2$-Kim-divides over $M$. Hence, by (iii), it follows that $\phi(x,b)$ $\ind^1$-Conant-divides over $M$, as required. 
\end{proof}
\begin{remark}
    It is clear that the use of Kim-independence in the previous proof is not necessary. We could have stated the more general result using another independence relation $\ind^4$ and the corresponding $\ind^4$-strict versions of $\ind^2$ and $\ind^3$. However, we have preferred to avoid this since Kim-independence is all we will use later on, and also to make it less cumbersome.
\end{remark}
As we will see in what follows, the above result has important applications in neostability theory. We need the following definition from \cite[Definition 3.2]{chernikov2012forking}:
\begin{definition}
    Let $A$ be a set and $b$ a tuple. We say $\phi(x,b)$ \textbf{quasi-divides over $A$} if there exist $m < \omega$ and $(b_i)_{i<m}$ with $b_i \equiv_A b$ for all $i < m$ such that the set $\{\phi(x,b_i) : i < m\}$ is inconsistent. 
\end{definition}
It is clear that, in order to apply \thref{collapse-of-ordering}, we need to find instances of independence relations $\ind$ for which Kim-strict $\ind$ has full existence. 
The key is to make the following observation: the proof of \cite[Corollary 3.16]{kim2022some} does not really use any specific features of $\NATP$ theories; rather, it just requires the assumption (which is now a theorem; see \cite[Corollary 2.25]{kruckman2024new}) that every formula that Kim-forks over $M$ also quasi-divides over $M$. Thus, this gives us a general theorem:
\begin{lemma} \thlabel{existence-of-kim-strict-free-ext}
    Let $\ind$ be an independence relation satisfying full existence, monotonicity, and quasi-strong finite character. Let $M \models T$. Then every type over $M$ has a global Kim-strict $\ind$-free extension over $M$. In other words, Kim-strict $\ind$ satisfies full existence.
\end{lemma}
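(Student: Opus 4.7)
The plan is to mimic the proof of \thref{type-definable-free-extensions}, but enlarging the partial type we build in order to additionally kill every Kim-forking formula on the right. Write $p(x) := \tp(a/M)$, let $\Sigma(x)$ be the ``universal $\ind$-free'' partial type from the proof of \thref{type-definable-free-extensions}, and set
\[
\Omega(x) := \{\neg \phi(x, b) : \phi(x, y) \in \mathcal{L}_M,\ b \in \M,\ \phi(a, y) \text{ Kim-forks over } M\}.
\]
Since $\iind$-Kim-forking is $\Aut(\M/M)$-invariant, ``$\phi(a, y)$ Kim-forks over $M$'' depends only on $p$, not on the choice of realisation, so $\Omega$ is well-defined. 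Any completion $q$ of $p(x) \cup \Sigma(x) \cup \Omega(x)$ to a global type is $M$-$\ind$-free by \thref{type-definable-free-extensions}; moreover, if $a^* \models q|_{Mb}$, then no formula of $\tp(b/Ma^*)$ can Kim-fork over $M$ (again by $\Aut(\M/M)$-invariance and the fact that $a^* \equiv_M a$). So $a^* \ind_M b$ and $b \kind_M a^*$ for every restriction, i.e.\ $q$ is a global Kim-strict $\ind$-free extension of $p$, which is exactly what full existence of Kim-strict $\ind$ demands.

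It remains to prove consistency of $p \cup \Sigma \cup \Omega$. Arguing as in \thref{type-definable-free-extensions}, by compactness this reduces to the following claim: given any finite tuple $c \subseteq \M$ and any formulas $\phi_1(x, b_1), \dots, \phi_n(x, b_n) \in \mathcal{L}_M$ with each $b_j \subseteq c$ and each $\phi_j(a, y)$ Kim-forking over $M$, there is $a^* \equiv_M a$ with $a^* \ind_M c$ and $a^* \not\models \phi_j(x, b_j)$ for every $j$. Suppose for contradiction that no such $a^*$ exists, so every $a^* \equiv_M a$ with $a^* \ind_M c$ satisfies $\bigvee_j \phi_j(x, b_j)$.

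Using full existence together with monotonicity, I would then build an $\ind$-independent sequence $(a_i^*)_{i < \kappa}$ of realisations of $p$ with $a_i^* \ind_M c\,a_{<i}^*$ for all $i$, extract via Erd\H{o}s--Rado an $Mc$-indiscernible subsequence $(a_i')_{i < \omega}$ (still $\ind$-free from $c$ over $M$, since quasi-strong finite character makes $\ind$-freeness type-definable and thus preserved in EM-patterns), and apply pigeonhole together with indiscernibility to obtain a fixed $j^*$ with $a_i' \models \phi_{j^*}(x, b_{j^*})$ for all $i$. Then $\{\phi_{j^*}(a_i', y) : i < \omega\}$ is consistent, realised by $b_{j^*}$.

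The final step, and the main obstacle, is to derive a contradiction with the fact that $\phi_{j^*}(a, y)$ quasi-divides over $M$, which we get from \cite[Corollary 2.25]{kruckman2024new}. Quasi-dividing only produces some specific witnessing tuple $(d_0, \dots, d_{r-1})$ with $d_k \equiv_M a$ and $\{\phi_{j^*}(d_k, y)\}$ inconsistent, whose $M$-type need not match that of any initial segment of $(a_i')$. The adaptation from \cite[Corollary 3.16]{kim2022some} handles this by using the freedom in extending the $\ind$-Morley sequence (via full existence and q.s.f.c.) to realise the same $M$-type as $(d_0, \dots, d_{r-1})$, or by an induction on $r$ reducing to the case where one $d_k = a$ and the consistency of $\phi_{j^*}(a_0', y)$ becomes itself contradictory. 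Once this reconciliation is made, the consistency of $\Pi$ follows, and hence so does the conclusion.
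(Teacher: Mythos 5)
Your setup is the intended one (the paper's proof is precisely ``run the proof of \cite[Corollary 3.16]{kim2022some} for $\ind$, replacing the NATP input by \cite[Corollary 2.25]{kruckman2024new}''): you add to the partial type $p(x)\cup\Sigma(x)$ of \thref{type-definable-free-extensions} the set $\Omega(x)$ of negations of instances of formulas $\phi(x,b)$ with $\phi(a,y)$ Kim-forking over $M$, observe that any completion is Kim-strict $\ind$-free, and reduce consistency by compactness to: if every $a^*\equiv_M a$ with $a^*\ind_M c$ satisfied $\bigvee_{j<n}\phi_j(x,b_j)$, one must reach a contradiction. The problem is that at exactly this point -- the only place where ``Kim-forking implies quasi-dividing'' is used, i.e.\ the mathematical content of the lemma -- you do not give an argument. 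You acknowledge it as ``the main obstacle'' and gesture at two fixes, but neither works as described. Your route (build an $\ind$-independent sequence of conjugates of $a$ over $Mc$, extract an $Mc$-indiscernible sequence, pigeonhole to a single $j^*$ with $\{\phi_{j^*}(a_i',y):i<\omega\}$ consistent) produces no tension with quasi-dividing: quasi-dividing only says that \emph{some} finite family of $M$-conjugates $(d_k)_{k<r}$ of $a$ makes $\{\phi_{j^*}(d_k,y)\}$ inconsistent, and this is perfectly compatible with consistency along your particular ($\ind$-independent, indiscernible) family. Your first suggested repair -- arranging the constructed sequence to ``realise the same $M$-type as $(d_0,\dots,d_{r-1})$'' -- is not available: the $d_k$ are arbitrary conjugates, typically far from $\ind$-independent, so no amount of freedom in extending an $\ind$-Morley sequence will reproduce their joint type; the second (``induction on $r$'') is not an argument.

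The correct step goes in the opposite direction, and mirrors the paper's own \thref{stationarity-and-kim-strictness}: do not try to make your independent sequence look like the quasi-dividing witnesses, but move the witnesses to be independent. First combine the finitely many forking formulas into one: since each $\phi_j(a,y_j)$ Kim-forks over $M$, so does $\psi(a,\bar y):=\bigvee_{j<n}\phi_j(a,y_j)$ in the joint variable $\bar y$ (adding dummy variables preserves Kim-dividing, and Kim-forking is closed under disjunction); pigeonholing to a single $j^*$ loses exactly the information you need, because different independent conjugates of $a$ may satisfy different disjuncts. By \cite[Corollary 2.25]{kruckman2024new}, $\psi(a,\bar y)$ quasi-divides over $M$: there are $(e_k)_{k<r}$ with $e_k\equiv_M a$ and $\{\psi(e_k,\bar y):k<r\}$ inconsistent. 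Now apply full existence to the \emph{whole tuple} $e_0\cdots e_{r-1}$ over $M$ against $c$, obtaining $(e^*_k)_{k<r}\equiv_M(e_k)_{k<r}$ with $e^*_0\cdots e^*_{r-1}\ind_M c$; by left monotonicity each $e^*_k\ind_M c$ and $e^*_k\equiv_M a$, so by your contradiction hypothesis (via the $\Sigma$-machinery of \thref{type-definable-free-extensions}, which is where quasi-strong finite character enters) each $e^*_k$ satisfies $\bigvee_j\phi_j(x,b_j)$, i.e.\ $\bar b=(b_j)_{j<n}$ realises $\{\psi(e^*_k,\bar y):k<r\}$ -- contradicting its inconsistency, which is preserved under the $M$-automorphism identifying $(e^*_k)_k$ with $(e_k)_k$. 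Without this step (or an equivalent one) your proposal does not prove the lemma.
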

Putting all of this together, we obtain the following dichotomy:
\begin{proposition} \thlabel{nsop1-or-btp-dichotomy}
    Let $\ind \implies \iind$ be an independence relation satisfying full existence, monotonicity, and quasi-strong finite character such that $\iind$ satisfies GUWP w.r.t. $\ind$. Then $T$ is either $\NSOP_1$ or $\BTP$.
\end{proposition}
\begin{proof}
    Suppose that $T$ is $\NBTP$. By the New Kim's Lemma (\cite[Theorem 5.2]{kruckman2024new}), Kim-strict $\iind$ satisfies GUWP w.r.t. $\iind$. Moreover, by assumption, $\iind$ satisfies GUWP w.r.t. $\ind$, and quasi-strong finite character and \thref{existence-of-kim-strict-free-ext}, Kim-strict $\ind$ satisfies full existence. Therefore, by \thref{collapse-of-ordering}, it follows that $\leq_{\text{i}}$ is trivial. Hence, $T$ is $\NSOP_1$. 
\end{proof}
Note that, from this, we can use Shelah's dichotomies to show that, in this context, $T$ is either simple or $\TP_2$, and stable or $\IP$. Moreover, note that the quasi-strong finite character condition means that the class of theories where an independence relation as in the hypothesis of \thref{nsop1-or-btp-dichotomy} contains the class of $\NSOP_1$ theories, since we can take $\ind = \iind$. 

The main application of the above result shows that most known $\NSOP_4$ theories from the literature are $\BTP$. To see this, let us note that the independence relations considered by Mutchnik in \cite{mutchnik2024conant} satisfy the assumptions of our theorem:
\begin{lemma} \thlabel{stationarity-implies-qsfc}
    If $\ind$ satisfies full existence, monotonicity, and stationarity over a set $C$, then it also satisfies quasi-strong finite character over $C$.
\end{lemma}
\begin{proof}
    Let $C$ be a set of parameters such that $\ind$ satisfies stationarity and full existence over $C$, and let $p(x),q(y) \in S(C)$. Pick $b_0 \models q$. By full existence, there is $a_0 \models p$ such that $a_0 \ind_C b_0$. Let $\Sigma_{p,q}(x,y) := \tp(a_0b_0/C)$. We claim that this $\Sigma_{p,q}$ works.

    Indeed, if $ab \models \Sigma_{p,q}(x,y)$, then $ab \equiv_C a_0b_0$ by definition, and thus by invariance $a \ind_C b$. Conversely, suppose that $a \models p(x)$ and $b \models q(y)$ are such that $a \ind_C b$. As $b \equiv_C b_0$, there is some $\sigma \in \Aut(\M/C)$ such that $\sigma(b) = b_0$. Let $a' := \sigma(a)$. Then $a'b_0 \equiv_C ab$, so $a' \ind_C b_0$. By assumption, $a_0 \ind_C b_0$. Thus, as $a' \equiv_C a_0$, it follows from stationarity that $a' \equiv_{Cb_0} a_0$. But then $a_0b_0 \equiv_C a'b_0 \equiv_C ab$, as required.
\end{proof}
\begin{corollary}
    \begin{enumerate}[(i)]
        \item Strictly $\NSOP_4$ free amalgamation theories are $\BTP$. 
        \item The model companion of the theory of graphs without cycles of length $\leq n$ is $\BTP$. 
        \item Strictly $\NSOP_4$ Hrushovski constructions with a ``good'' control function are $\BTP$. 
        \item The generic $\mathbf{H}_4$-free 3-hypertournament is $\BTP$.
    \end{enumerate}
\end{corollary}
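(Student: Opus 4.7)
The plan is a uniform invocation of \thref{nsop1-or-btp-dichotomy}. For each of the four listed classes I would exhibit an independence relation $\ind$ that (a) satisfies full existence and left monotonicity; (b) has either stationarity or quasi-strong finite character; and (c) is such that $\iind$ has GUWP with respect to it. Since each of the theories in the list is either assumed or known to be strictly $\NSOP_4$, none of them is $\NSOP_1$, so the dichotomy forces $\BTP$ in every case.

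For (i), the natural candidate is the canonical free amalgamation relation of the theory. By the defining axioms of free amalgamation theories, this relation has full existence, monotonicity, and stationarity (the free amalgam being canonical by design). The GUWP of $\iind$ with respect to it in free amalgamation theories is essentially the ``free amalgamation version'' of Kim's Lemma that appears in Mutchnik's $\NSOP_4$ analysis referenced in the introduction of \cite{mutchnik2022nsop}. Item (ii) then reduces to (i), since for $n \geq 3$ the model companion of the theory of graphs of girth $> n$ is a free amalgamation theory with the standard graph amalgam as the free amalgamation relation; the strictly $\NSOP_4$ instances occur for $n \geq 4$. For (iii), I would take $\ind$ to be the stationary independence relation coming from the self-sufficient ($\delta$-)closure in the Hrushovski construction: the ``good'' control function assumption is precisely what guarantees that this relation has full existence and stationarity, and that the corresponding $\delta$-Morley sequences witness Kim-dividing, yielding GUWP of $\iind$ with respect to it.

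For (iv), I would apply the dichotomy with $\ind = \ind^{\textnormal{hti}}$, introduced in \cite{miguel2024classification}. By \cite[Lemma 6.1.8]{miguel2024classification}, $\ind^{\textnormal{ht}}$ and $\iind$ are compatible in $T_{\mathbf{H}_4\text{-free}}$, so by \thref{conjunction-props} the relation $\ind^{\textnormal{hti}} = \ind^{\textnormal{ht}} \wedge \iind$ has full existence and monotonicity; quasi-strong finite character and GUWP of $\iind$ with respect to $\ind^{\textnormal{hti}}$ are also established there.

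The main obstacle in each case is verifying the witnessing property (c), which amounts to a concrete Kim's Lemma for the chosen independence relation in the ambient theory; this is typically the hardest part of the analysis of any strictly $\NSOP_4$ example. In the four families above, however, the relevant Kim's Lemma is already available in the literature, so the present argument reduces to a packaging step: check that the hypotheses of \thref{nsop1-or-btp-dichotomy} hold for the chosen $\ind$, and read off $\BTP$ from the failure of $\NSOP_1$.
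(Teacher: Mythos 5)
Your proposal is correct and follows essentially the same route as the paper: both apply \thref{nsop1-or-btp-dichotomy} via the stationary independence relations from \cite{mutchnik2024conant} for (i)--(iii) and via $\ind^{\text{hti}}$ from \cite{miguel2024classification} for (iv). Your write-up simply spells out in more detail the verification of the hypotheses that the paper leaves implicit.
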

\begin{proof}
    In all cases, we use \thref{nsop1-or-btp-dichotomy}. For (i)-(iii), we can use the stationary independence relations from \cite{mutchnik2024conant} together with \thref{stationarity-implies-qsfc}. For (iv), we can use $\ind^{\text{hti}}$ from \cite{miguel2024classification}.
\end{proof}
\begin{remark}
    Let us mention in passing that the result for $T_{\mathbf{H}_4\text{-free}}$ can be improved in a similarly ``semantic'' fashion, although in a way that does not seem to directly generalise to cover other cases. The idea is to adapt the notion of reliable extensions introduced by Hanson in \cite{hanson2023bi} to $\ind^{\text{hti}}$-free extensions of types, and use \cite[Proposition 2.6]{hanson2023bi} to conclude that $T_{\mathbf{H}_4\text{-free}}$ must also be $\CTP$.
\end{remark}
\bibliographystyle{amsalpha}
\bibliography{list}
\end{document}